\newtheorem{maintheorem}{Theorem}
\newtheorem{theorem}{Theorem}[section]
\newtheorem{corollary}[theorem]{Corollary}
\newtheorem{lemma}[theorem]{Lemma}
\newtheorem{remark}[theorem]{Remark}
\newtheorem{proposition}[theorem]{Proposition}
\newtheorem{definition}[theorem]{Definition}
\newcommand{\D}{\Delta}
\newcommand{\al}{\alpha}
\newcommand{\p}{\mathbb{P}}
\renewcommand{\P}{\mathbb{P}}
\newcommand{\E}{\mathbb{E}}
\def\T{{\mathcal T}}
\def\M{{\mathcal M}}
\def\Tree{{ T}}
\def\sTV{\mbox{\tiny\rm TV}}
\def\M{\mathcal{M}}
\def\mm{\mathfrak{m}}
\def\limMes{{\bar \nu}}
\newcommand{\defeq}{\vcentcolon=}
\begin{document}

\title{Decay of Correlations for the Hardcore Model on the $d$-regular Random Graph}

\author{Nayantara Bhatnagar}

\address{Department of Mathematical Sciences,  University of Delaware, Newark, DE, 19716.}
\email{nayantara.bhatnagar@gmail.com}

\author{Allan Sly}
\address{Department of Statistics, University of Califoria, Berkeley, Berkeley, CA, 94720.}
\email{sly@stat.berkeley.edu}

\author{Prasad Tetali}
\address{School of Mathematics and School of Computer Science, Georgia Institute of Technology, Atlanta, GA 30332.}
\email{tetali@math.gatech.edu}

%
%
%

\maketitle

\begin{abstract}
A key insight from statistical physics about spin systems on random graphs is the central role played by Gibbs measures on trees.
We determine the local weak limit of the hardcore model on random regular graphs asymptotically until just below its condensation threshold, showing that it converges in probability locally in a strong sense to the free boundary condition Gibbs measure on the tree.  As a consequence we show that the reconstruction threshold on the random graph, indicative of the onset of point to set spatial correlations,  is equal to the reconstruction threshold on the $d$-regular tree for which we determine precise asymptotics. We expect that our methods will generalize to a wide range of spin systems for which the second moment method holds.
\end{abstract}

\section{Introduction}

In this paper we consider the hardcore model on random $d$-regular graphs and study its local spatial mixing properties.  We determine the location of a phase transition where the  model undergoes a spatial mixing transition after which the spin at a typical vertex becomes dependent over long distances.  Theory from statistical physics relates this transition to the clustering or shattering threshold and both  of these transitions appear to be related to the apparent computational difficulty of finding large independent sets.  No algorithms are known to find independent sets of size $\frac{(1+\epsilon)\log d}{d} n$ in a random $d$-regular graph on $n$ vertices which coincides with the spatial mixing threshold.  In contrast the maximal independent set is of size $\frac{(2-o_d(1))\log d}{d} n$ \cite{FriLuc:92}.  In this work, we show that the reconstruction or extremality threshold on the infinite $d$-regular tree determines the onset of long distance point to set spatial correlations in the random $d$-regular graph.  We prove an asymptotic lower bound on the reconstruction  threshold which matches the known upper bound in the first two terms of the asymptotic series. Together, these results determine the asymptotic location of the threshold for the random $d$-regular graph for the onset of point to set correlations over long distances.

For a finite graph $G=(V,E)$ an independent set is a subset of the
vertices containing no adjacent vertices. Denote the set of independent sets as $I(G)$. We will view an independent set as a {\em spin configuration} $\sigma$, taking values in $\{0,1\}^V$ with $\sigma_v$ denoting the spin at the vertex $v$. The {\em hardcore model} (or {\em hardcore measure}) is the
probability measure over the set of independent sets $\sigma \in I(G)$ given by
\begin{equation}\label{eq:defnHardcore}
\p(\sigma)=\frac1{Z} \lambda^{\sum_{v\in V}\sigma_v} \mathbbm{1}_{\sigma\in I(G)}.
\end{equation}
The parameter $\lambda>0$ is known as the \emph{fugacity} and controls the typical size of an independent set with larger values of $\lambda$ putting more of the weight of the distribution on larger independent sets. As usual, $Z$ is a
normalizing constant called the partition function.
The definition of Gibbs measures and the hardcore model in particular can be extended to infinite graphs by way of the Dobrushin-Lanford-Ruelle condition
which essentially says that for every finite set $A$, the probability of a configuration on $A$ is given by the Gibbs distribution given by a random boundary generated by the measure outside of $A$.  Such a measure is called a Gibbs measure, and it may not be unique
(see e.g. \cite{Geo88} for more details).


On the infinite $d$-regular tree $\Tree_d$, there is a unique Gibbs measure for the hardcore model if and only if $\lambda \leq \frac{(d-1)^{d-1}}{(d-2)^d}$.  However, for every $\lambda$, there exists a translation
invariant Gibbs measure given by a Markov model on the tree which we denote by $\P_{\Tree_d}$ (henceforth, we refer to this as ``the translation invariant measure" on $T_d$).
We denote the density of $\P_{\Tree_d}$, that is, the probability that a site is occupied, by $\alpha = \alpha(\lambda,d)$ which satisfies the relation
\begin{equation}\label{e:alphaLambdaRelation}
\lambda=\frac{\al}{1-2\al}\left(\frac{1-\al}{1-2\al}\right)^{d-1}.
\end{equation}
Since $\alpha=\alpha(\lambda,d)$ is a strictly monotone increasing function of $\lambda$  we will use both parameters interchangeably to specify the model depending on the context.  The density of the largest independent set of a $d$-regular random graph is asymptotically $(2\log d - (2+o(1))\ln \ln d)/d$ \cite{FriLuc:92}.  The results we present hold very close to this threshold, up to
\[
\alpha< \alpha_c(d) := \frac{(2\log d - (3+o(1))\ln \ln d)}{d}.
\]
We take $\lambda_c$ to be the corresponding value of $\lambda$.
The bulk of this paper is devoted to establishing that the hardcore measure on the random $d$-regular graph is well approximated locally by the measure $\P_{\Tree_d}$ when $\lambda < \lambda_c$.  We prove that the measure converges in a strong notion of local weak convergence described in Section~\ref{s:localWeakConv}.

\begin{maintheorem}\label{thm:local-weak-conv}
Let $G_n$ be the random $d$-regular graph on $n$ vertices.  Then for large enough $d$, the hardcore measure on $G_n$ with fugacity $\lambda  < \lambda_c $ converges in probability locally to the measure $\P_{\Tree_d}$.
\end{maintheorem}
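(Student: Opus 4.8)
The plan is to use the second moment method together with small subgraph conditioning to control the partition function $Z_n$ of the hardcore model on $G_n$, and then to read off the local marginals by comparing $Z_n$ with ``decorated'' partition functions in which the configuration on a small ball is frozen. Throughout I would work in the configuration model and invoke its contiguity with the uniform random $d$-regular graph, so that moments can be evaluated by enumeration and switching arguments; the condition $\lambda<\lambda_c$, i.e.\ that we stay just below the condensation threshold, is used precisely to make the second moment method succeed.

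First I would carry out the first moment computation. Writing $Z_n=\sum_k\lambda^k N_k$ with $N_k$ the number of independent sets of size $k$, one computes $\E N_k$ and identifies $\al^\ast=\al(\lambda,d)$ determined by \eqref{e:alphaLambdaRelation} as the unique maximizer of the exponential growth rate, so that $\E Z_n=e^{n\phi(\lambda)+o(n)}$ with the dominant contribution coming from independent sets of density $\al^\ast$. For the second moment, $\E Z_n^2$ is a sum over pairs of independent sets organized by their overlap profile (the fractions of vertices occupied in both sets, in exactly one, or in neither), and the heart of the matter is to show that the associated rate function has a unique, non-degenerate global maximum at the ``uncorrelated'' profile, where the two sets behave as independent samples of density $\al^\ast$, with no competing maximizer. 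Granting this, $\E Z_n^2=\Theta\big((\E Z_n)^2\cdot K_n\big)$ with $K_n$ converging to an explicit product over short cycle lengths, and the small subgraph conditioning theorem of Janson and Robinson--Wormald yields $Z_n/\E Z_n\to W$ in $L^2$, where $W=W(C_3,C_4,\dots)>0$ almost surely is an explicit function of the (asymptotically independent Poisson) cycle counts.

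To obtain the local law, fix $r$ and a valid configuration $\tau$ on the radius-$r$ ball; by exchangeability of the configuration model it suffices to control the law around a fixed vertex $o$, for which the ball is the complete $d$-regular tree of depth $r$ with probability $\to1$. Let $Z_n^\tau$ be the sum of $\lambda^{|\sigma|}$ over independent sets $\sigma$ agreeing with $\tau$ on the ball, so that $Z_n^\tau/Z_n=\P_{G_n}(\sigma_{\text{ball}}=\tau)$ up to the negligible event that the ball is not a tree. I would repeat the first, second and cross moment analysis for $Z_n^\tau$: since the frozen region consists of $O(d^r)$ vertices and the short cycles lie at distance $\to\infty$ from $o$, the overlap variational problem and the Poisson cycle corrections are unchanged, so $\E(Z_n^\tau)^2/(\E Z_n^\tau)^2$ and $\E[Z_n^\tau Z_n]/(\E Z_n^\tau\,\E Z_n)$ both converge to the same constant $K=\lim K_n$; hence $\E\big[(Z_n^\tau/\E Z_n^\tau - Z_n/\E Z_n)^2\big]\to 0$ and $Z_n^\tau/Z_n\to \E Z_n^\tau/\E Z_n$ in probability, a deterministic limit. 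Finally I would evaluate this ratio: $\E Z_n^\tau/\E Z_n$ is a first moment ``number of extensions'' computation that telescopes along the tree-like ball and, using \eqref{e:alphaLambdaRelation}, reproduces exactly the free boundary recursion for $\P_{\Tree_d}$, giving $\E Z_n^\tau/\E Z_n\to\P_{\Tree_d}(\sigma_{\text{ball}}=\tau)$. Summing over $\tau$ and letting $r\to\infty$ gives convergence in probability of the local law to $\P_{\Tree_d}$; the stronger notion of local convergence follows by running the same decorated-partition-function argument for configurations supported on finitely many disjoint balls, or by standard extension arguments.

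The main obstacle is the second moment step near $\lambda_c$. Because $\lambda_c$ is essentially at the independence number of $G_n$, the ``planted''/correlated overlap profiles become nearly as efficient as the symmetric one, so establishing that the overlap rate function has a unique non-degenerate maximum at the symmetric point requires delicate, uniform estimates all the way up to just below condensation; one then has to propagate this control to the decorated partition functions $Z_n^\tau$ and to the cross moments $\E[Z_n^\tau Z_n]$ so that small subgraph conditioning can be run jointly. An alternative route for the final extraction step is to pass through contiguity with the planted (teacher--student) model, whose local law around a random vertex is directly computable as the tree measure, but this still rests on the same second moment estimate.
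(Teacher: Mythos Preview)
Your outline is a legitimate route to the theorem, but it is \emph{not} the route the paper takes, and the difference is worth spelling out. You lean on small subgraph conditioning: compute $\E Z_n^2/(\E Z_n)^2$ to constant precision, match the constant to the short-cycle Poisson product, conclude $Z_n/\E Z_n\to W>0$, and then run the same machinery for a decorated $Z_n^\tau$ around a single fixed vertex to extract the local law via $Z_n^\tau/Z_n\to \E Z_n^\tau/\E Z_n$. This is the standard template (as in Mossel--Weitz--Wormald for smaller $\lambda$), and with exchangeability it does yield convergence in probability locally. The catch is exactly what you flag: to push this up to $\lambda_c$ you must nail the second moment optimisation and the cycle-correction constants precisely, and then redo the cross-moment analysis for each $\tau$; none of this is in the paper and it is genuinely delicate near $\lambda_c$.

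The paper deliberately sidesteps small subgraph conditioning. Instead it punctures the graph by removing the radius-$r$ neighbourhoods of $n^{3/5}$ random vertices simultaneously, and studies the partition function $Z_{G,\sigma}$ conditioned on the boundary configuration $\sigma$. The only second moment input it uses is the crude bound $\hat\E(Z_{G,\sigma}^2)\le e^{O(n^{1/5})}(\hat\E Z_{G,\sigma})^2$; this is combined with Azuma's inequality on $\ln Z_{G,\sigma}$ (bounded edge-differences) to show that $Z_{G,\sigma}$ is within a multiplicative $e^{O(n^{1/2+\kappa})}$ of its mean with high probability. The key new idea is Proposition~\ref{prop:E1-E2}: if a constant fraction of the $n^{3/5}$ local marginals were far from the tree law, one could build a set $\mathcal B$ of boundary configurations carrying constant hardcore mass but expected mass $e^{-cn^{3/5}}$, and this is shown to be impossible by a Markov/second-moment argument over $\sigma$. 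What this buys is robustness: no constant-level second moment asymptotics, no cycle corrections, and a scheme the authors argue transfers to other spin systems once the (polynomial-factor) second moment bound and log-Lipschitz property of $Z$ are available. Your approach would also work here but demands sharper control of the variance; the paper's trades that for the more combinatorial ``many-punctures plus bad-set'' argument.
\qed
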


Our methods provide a general framework for proving convergence in probability locally which we expect will apply to various other Gibbs measures on random graphs such as colorings or NAE-SAT.
Having established Theorem~\ref{thm:local-weak-conv}, it is natural to consider properties of the measure $\P_{\Tree_d}$.
The set of Gibbs measures is convex and so we may ask whether $\P_{\Tree_d}$ is extremal, that is, it is not a convex combination of other Gibbs measures.  Extremality is equivalent to a notion of point to set correlation on trees called the reconstruction problem (for a survey, see~\cite{Mos04}).

To formalize the definition of the problem, we will make use of a description of $\P_{\Tree_d}$ as a
Markov model on the tree generated as follows.
First the spin at the root is chosen to be occupied with probability $\alpha$ and unoccupied with probability $1-\alpha$,
where $\alpha$ is chosen as in~\eqref{e:alphaLambdaRelation}.
The spins of the remaining vertices of the graph are generated from their parents' spins by taking one step of the Markov transition matrix
$$M = \left(
\begin{array}{cc}
p_{11} & p_{10}\\
p_{01} & p_{00}
\end{array}
\right)
=
\left(
\begin{array}{cc}
0 & 1\\
\frac{\alpha}{1- \alpha} & \frac{1-2\alpha}{1-\alpha}\\
\end{array}
\right)\,,
$$
where $p_{i  j}$ denotes the probability of the spin at a vertex being $j$ given that the spin of the parent is state $i$.
Since $(\alpha,1-\alpha)$ is reversible with respect to $M$ this gives a translation invariant measure on $T_d$ which corresponds to the measure $\P_{\Tree_d}$ with fugacity $\lambda$.

Let $\sigma(L)$ denote the spins of the vertices at distance $L$ from the root as generated by the Markov model described above.
The reconstruction problem on the tree asks if we can
recover information on $\sigma_\rho$, the spin of the root $\rho$ from the spins $\sigma(L)$ as $L \to \infty$.
Formally, we say that the model  $(\Tree_d,M)$ has \emph{non-reconstruction} if
\begin{eqnarray}\label{eq:tree-non-recon}
\lim_{L \to \infty} \; \P_{\Tree_d}(\sigma_\rho=1|\sigma(L))\to \alpha(\lambda,d)
\end{eqnarray}
in probability as $L \to \infty$, and otherwise, the model has
\emph{reconstruction}. Non-reconstruction is equivalent to extremality of the Gibbs measure or that the tail
$\sigma$-algebra of the Gibbs measure is trivial~\cite{Mos04}.


Information theoretically, non-reconstruction corresponds to fast decay of correlations between the spin at the root and the spins of far away vertices \cite{Mos04}. Proposition~12 of~\cite{Mos01} implies that there exists a critical fugacity
$\lambda_R$ (or, equivalently, a critical density $\alpha_R$) such that reconstruction holds for the hardcore model with fugacity $\lambda > \lambda_R$
and non-reconstruction holds for $\lambda < \lambda_R$.  The reconstruction problem on the tree was originally studied as a
problem in statistical physics but has since found many applications
including in computational phylogenetic reconstruction~\cite{DasMosRoc06}, the
study of the geometry of the space of random constraint satisfaction
problems (CSP's) \cite{AchCoj08,KrzMonRicSemZde07} and the mixing time of Markov
chains~\cite{BerKenMosPer05, BorChaMosRoc06,MarSinWei04,ResSteVerVigYan11,TetVerVigYan10}.

Here we establish tight bounds on the reconstruction threshold for the hardcore model on the $d$-regular tree\footnote{This result previously appeared in extended abstract form in \cite{BhaSlyTet10}.}.  The upper bound was shown by Brightwell and Winkler~\cite{BriWin04}, and our contribution is the lower bound.
\begin{maintheorem}\label{thm:tree-non-reconst}
For large enough $d$, the reconstruction threshold for $\P_{\Tree_d}$ on the $d$-regular tree satisfies
\[
\frac{(\ln 2-o(1))  \ln^2 d}{2 \ln \ln d} \leq \lambda_R \leq (e+o(1))\ln^2 d.
\]

\end{maintheorem}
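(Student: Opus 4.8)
The upper bound $\lambda_R\le(e+o(1))\ln^2 d$ is the theorem of Brightwell and Winkler~\cite{BriWin04}, so the plan is to establish the matching lower bound, i.e.\ non-reconstruction of $\P_{\Tree_d}$ whenever $\lambda$ is below $\frac{(\ln 2-o(1))\ln^2 d}{2\ln\ln d}$. By the monotonicity of reconstruction in $\lambda$ (Proposition~12 of~\cite{Mos01}) it is enough to verify \eqref{eq:tree-non-recon} for $\lambda$ right up to this value. I would phrase everything through the likelihood-ratio recursion: for a vertex $v$ let $\rho_v^{(L)}$ be the ratio of $\P_{\Tree_d}(\sigma(L)\cap T_v\mid\sigma_v=1)$ to $\P_{\Tree_d}(\sigma(L)\cap T_v\mid\sigma_v=0)$, so that a leaf $u$ at level $L$ enters the product with $\rho_u=0$ if $\sigma_u=0$ and with $\rho_u=+\infty$ (so its factor is $0$) if $\sigma_u=1$, while for every vertex $v$ (the root having $d$ children, all others $d-1$)
\[
\rho_v \;=\; \prod_{u\text{ a child of }v}\frac{1-\alpha}{\,1-2\alpha+\alpha\rho_u\,}.
\]
Since $\P_{\Tree_d}(\sigma_\rho=1\mid\sigma(L))=\alpha\rho_\rho^{(L)}/(\alpha\rho_\rho^{(L)}+1-\alpha)$, non-reconstruction is precisely the assertion that $\rho_\rho^{(L)}\to 1$ in probability as $L\to\infty$.

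The approach is then to follow how $\rho^{(L)}$ propagates up the tree, conditioned on the root spin. Besides the target value $\rho\equiv 1$, the recursion has a second quasi-fixed value $\Lambda_0:=\bigl(\tfrac{1-\alpha}{1-2\alpha}\bigr)^{d-1}=\tfrac{\lambda(1-2\alpha)}{\alpha}\asymp e^{\alpha d}$, namely the value $\rho_v$ takes when every child of $v$ has $\rho$-value below $1$ --- which is exactly the configuration forced one level above the leaves when $\sigma_\rho=1$. Non-reconstruction thus comes down to showing that, conditioned on $\sigma_\rho=1$, the recursion escapes $\Lambda_0$ and relaxes back to $1$ as $L$ grows. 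The mechanism driving the relaxation is that an empty vertex has $\Theta(\alpha d)=\Theta(\ln d)$ occupied children in expectation, each of which carries a large $\rho$-value and hence multiplies its parent's value by a factor bounded away from $1$; conditioned on $\sigma_v=1$, however, all $d-1$ children are empty, so one must instead rely on the $\Theta(e^{-\alpha d})=\Theta(\alpha/\lambda)$-probable ``frozen'' events deeper in the subtrees to supply the reductions. Balancing the branching number $d$ against these small probabilities $\alpha$ and $\alpha/\lambda$ yields a critical fugacity of order $\ln^2 d$, up to a $\ln\ln d$ factor; carrying the precise asymptotics relating $\alpha$ and $\lambda$ and the sharp weights of the frozen events through the estimate is what produces the constant $\tfrac{\ln 2}{2\ln\ln d}$. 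Once $\rho_\rho^{(L)}$ has entered a fixed neighbourhood of $1$, a linear-stability estimate for the recursion about $\rho=1$ --- which decays because $\alpha\asymp\ln d/d$ lies far below the Kesten--Stigum threshold $\alpha\asymp d^{-1/2}$ --- upgrades the escape to the desired $\rho_\rho^{(L)}\to 1$ in probability; the same estimates simultaneously show that the conditional law given $\sigma_\rho=0$ collapses to a point mass at $1$ as well.

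The hard part is that hardcore reconstruction sits far below the Kesten--Stigum/second-moment threshold, so the recursion is genuinely non-linear and is governed by rare ``freezing'' events --- the large values of $\rho$ encode locally frozen clusters of vertices all forced to be empty --- which no variance-type bound detects. Concretely, three points demand care: (i) the escape of $\rho_\rho^{(L)}$ from $\Lambda_0$ behaves like a multiplicative random walk with rare upward jumps (a level at which the relevant vertex happens to have no ``reducing'' descendants pushes $\rho$ back toward $\Lambda_0$), and one must show the negative drift prevails all the way down an infinite ray; (ii) the weights of the successive tails of the conditional law of $\rho^{(L)}$ given $\sigma_v=0$ must be estimated sharply, since it is precisely their fine structure --- together with the second-order term in $\ln\Lambda_0=\alpha d+O((\alpha d)^2/d)$ --- that fixes the constant, and this $\ln\ln d$ factor is exactly the gap between our lower bound and the Brightwell--Winkler upper bound; and (iii) one needs convergence in probability, i.e.\ triviality of the tail $\sigma$-field as in~\cite{Mos04}, not merely a bound on an expectation, so the estimates must be quantitative and summable over the depth $L$.
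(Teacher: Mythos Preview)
Your high-level two-phase structure --- an ``escape'' step followed by a contraction step near the fixed point --- matches the paper's, and your identification of the rare $\Theta(\alpha/\lambda)$-probable all-zero events two levels below an occupied vertex as the mechanism driving the escape is exactly the right phenomenon. But the execution of the escape step as written is a genuine gap. The process $L\mapsto\rho_\rho^{(L)}$ is not a multiplicative random walk in any useful sense: it is not Markov in $L$, there is no ``ray'' along which a drift can be computed, and nothing is being summed over levels. What the paper does instead is a single explicit finite-depth calculation: it shows (Proposition~\ref{prop:expected-small}(c)) that already at depth $3$, conditionally on $\sigma_\rho=1$, the expected posterior $\E^1_{T(3)}[\P(\sigma_\rho=1\mid\sigma(L))]\le\tfrac12$. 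This is done by isolating the set $\mathcal G$ of depth-$2$ boundary configurations for which the posterior is exactly $\tfrac12(1+\tfrac1{1+2\lambda})$, computing its probability under $\P^0_{T(2)}$ via a Bayes flip, and then counting (binomially) how many of the $d$ depth-$2$ subtrees land in $\mathcal G$. The constant $\ln 2$ in the theorem is the rate at which each $\mathcal G$-subtree halves the likelihood ratio, and the $\ln\ln d$ is the expected number of such halvings needed to beat $\lambda$; a generic drift heuristic will not produce either without redoing essentially this computation.

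For the contraction phase the paper does not track $\rho$ pathwise at all. It works with the second moment of the magnetization, $\overline X(n)=\E_T[X(n)^2]$ where $X$ is the centered posterior of~\eqref{eq:mag2}, and shows (Proposition~\ref{prop:recursion}, following~\cite{BorChaMosRoc06}) that once $\overline X(n)\le\alpha/2$ one has $\overline X(n+1)\le c\,\overline X(n)$ with $c<1$. Since $\overline X\to 0$ is \emph{equivalent} to non-reconstruction (Proposition~\ref{p:bar-x-non-reconst}), there is no need for summable error bounds over $L$ or a separate tail-$\sigma$-field argument --- your point (iii) is an artifact of insisting on a pathwise formulation. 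Recasting the problem in terms of $\overline X$ rather than the law of $\rho$ is what makes the recursion close and is the main methodological difference between your proposal and the paper's proof.
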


Prior to our work, Martin~\cite{Mar03} had shown that $\lambda_R > e-1$. Restating  Theorem~\ref{thm:tree-non-reconst} in terms of $\al$ we have
that the critical density for reconstruction satisfies
\begin{equation}\label{e:omegaBound}
\frac1d \left( \ln d + \ln
  \ln d - \ln \ln \ln d  - \ln 2 +  \ln \ln 2 - o(1) \right)\leq \al_R \leq \frac1d \left( \ln d + \ln
  \ln d +1+o(1) \right)
\end{equation}
leaving only an additive $(\ln \ln \ln d)/d$ gap between the bounds. The form of our bound in equation \eqref{e:omegaBound} is
strikingly similar to the bound for the $q$-coloring model~\cite{Sly09a} which
states that reconstruction (resp. non-reconstruction) holds when the
degree $d$ is at least (resp. at most) $q(\ln q + \ln \ln q + O(1))$.

The next theorem, combined with Theorem \ref{thm:tree-non-reconst} gives a precise picture of the local spatial mixing properties of the hardcore model on the random $d$-regular graph.  In~\cite{GerMon07} a natural extension of the reconstruction problem was introduced for graphs.
Let $\{G_n\}$ be a family of random graphs  whose size $n$ goes to infinity, and let $\sigma$ be distributed according to the hardcore model with fugacity $\lambda$. We will use $\sigma(S)$ to denote the configuration on a subset of vertices $S$ and $\sigma_v$ to denote the spin at a vertex $v$. The model has non-reconstruction if for a uniformly chosen $u \in V(G_n)$,
\begin{align}\label{eq:graph-non-recon}
\lim_{L \to \infty} \; \limsup_n \; \E \left|\P\Big(\sigma_u=1 | \sigma(\partial B_u(L)), u\Big) - \alpha(\lambda,d)\right| = 0
\end{align}
where $B_u(L)$ denotes the vertices within distance $L$ of $u$ (and by abuse of notation, the induced subgraph), $\partial B_u(L)$ denotes the boundary of $B_u(L)$ and $\al(\lambda,d)$ is the density given by~\eqref{e:alphaLambdaRelation}.

\begin{maintheorem}\label{thm:graph-tree-reconst}
Let $\lambda < \lambda_c$ and let $\al(\lambda,d)$ be the density given by~\eqref{e:alphaLambdaRelation}. Let $G_n$ be the random $d$-regular graph on $n$ vertices and let $u$ be a uniformly random vertex in $V(G_n)$. Then, for large enough $d$,
\begin{align*}
\P_{T_d}(\sigma_\rho =1 | \sigma(L))  &\stackrel{\P}{\to} \alpha(\lambda,d) \ \ \text{as}\;  L \to \infty \\
& \Leftrightarrow \nonumber \\
\lim_{L \to \infty} \; \limsup_n \; \E \bigg|\P\Big(\sigma_u &=1 | \sigma(\partial B_u(L)), U\Big)  - \alpha(\lambda,d)\bigg| = 0
\end{align*}
\end{maintheorem}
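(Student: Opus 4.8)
The plan is to deduce both implications from a single finite-depth identity and then to exchange the two limits. Fix $L$ and write $X^L_u \defeq \P\big(\sigma_u=1\mid\sigma(\partial B_u(L)),u\big)$ for the graph quantity and $Y^L\defeq\P_{\Tree_d}(\sigma_\rho=1\mid\sigma(L))$ for the tree quantity; since both lie in $[0,1]$, non-reconstruction on $\Tree_d$ (convergence in probability of $Y^L$ to $\alpha$) is equivalent to $\E_{\Tree_d}|Y^L-\alpha|\to 0$ as $L\to\infty$. The crux will be the claim that for every fixed $L$ the limit $\lim_n\E|X^L_u-\alpha|$ exists and equals $\E_{\Tree_d}|Y^L-\alpha|$. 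Granting this, the equivalence in the theorem is immediate: if non-reconstruction holds on the tree then $\lim_L\limsup_n\E|X^L_u-\alpha|=\lim_L\E_{\Tree_d}|Y^L-\alpha|=0$, which is \eqref{eq:graph-non-recon}; conversely, \eqref{eq:graph-non-recon} forces $\lim_L\E_{\Tree_d}|Y^L-\alpha|=0$, i.e.\ non-reconstruction on the tree.

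To establish the finite-$L$ identity I would proceed in three steps. First, for a uniformly random $u$ in $G_n$ and $L$ fixed, the ball $B_u(L)$ fails to be isomorphic to the depth-$L$ ball of $\Tree_d$ with probability $o(1)$ as $n\to\infty$; on this bad event $|X^L_u-\alpha|\le 1$ contributes nothing in the limit. Second, on the complementary ``tree-like'' event I would identify $X^L_u$ exactly: because the hardcore model is a nearest-neighbor Gibbs measure, the sphere $\partial B_u(L)$ separates $B_u(L)$ from the rest of $G_n$, so conditioning on $\sigma(\partial B_u(L))=\eta$ makes the configuration on the interior of $B_u(L)$ independent of everything outside and equal in law to the free-boundary hardcore measure on the tree $B_u(L)$ with leaf condition $\eta$; hence $X^L_u$ equals the belief-propagation recursion $\mathrm{BP}_L$ evaluated at $\eta=\sigma(\partial B_u(L))$, and the very same recursion computes $Y^L=\mathrm{BP}_L(\sigma(L))$ on $\Tree_d$, the spins below depth $L$ being conditionally independent of $B_\rho(L)$ given $\sigma(L)$. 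Third, since $|X^L_u-\alpha|\,\mathbbm{1}[B_u(L)\text{ tree-like}]$ is then a bounded functional of the rooted configuration on a ball of fixed radius $L$, the identity follows from Theorem~\ref{thm:local-weak-conv}: the strong local weak convergence of the hardcore measure on $G_n$ to $\P_{\Tree_d}$ is exactly what guarantees that the expectation of such a functional converges to its value under $\P_{\Tree_d}$.

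The hard part will be the exact identification of $X^L_u$ with the tree belief-propagation map, which demands care on two points: checking via the spatial Markov property that conditioning on $\sigma(\partial B_u(L))$ genuinely reproduces the free-boundary hardcore measure on $B_u(L)$ — so that the graph- and tree-side conditional laws are literally the same function of the boundary spins — and confirming that the notion of local weak convergence recorded in Section~\ref{s:localWeakConv} and Theorem~\ref{thm:local-weak-conv} is strong enough to transfer these finite-radius conditional expectations to the limit, which it is, since it controls arbitrary bounded functions of the configuration on a fixed-radius ball. A secondary point is bookkeeping with the order of limits: the graph side is stated with $\limsup_n$, but the finite-$L$ identity shows the limit in $n$ actually exists, so interchanging $\lim_L$ with $\lim_n$ is legitimate.
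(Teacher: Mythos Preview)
Your proposal is correct and follows the same route as the paper: the paper simply records that ``given local weak convergence, the equivalence of the reconstruction thresholds and hence Theorem~\ref{thm:graph-tree-reconst}, follow,'' and you have supplied precisely the standard argument behind that sentence --- the spatial Markov property identifies the conditional $\P(\sigma_u=1\mid\sigma(\partial B_u(L)))$ with the tree belief-propagation map on the high-probability event that $B_u(L)$ is a tree, and convergence in probability locally (Theorem~\ref{thm:local-weak-conv}) then transfers the expectation of this bounded local functional to $\P_{\Tree_d}$, yielding $\lim_n\E|X^L_u-\alpha|=\E_{\Tree_d}|Y^L-\alpha|$ for each fixed $L$.
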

That is, the random $d$-regular graph has non-reconstruction if and only if $(T_d,M)$ has non-reconstruction.




\subsection{Related work}
A significant body of work has been devoted to the reconstruction problem on the $d$-regular tree by probabilists, computer scientists and physicists for a number of different spin configuration models.  The earliest such result is the Kesten-Stigum bound~\cite{KesSti66} which states that for a Markov model defined on the tree, reconstruction holds whenever $\theta^2 (d-1) > 1$, where $\theta$ is the second largest eigenvalue of the corresponding Markov matrix.  This bound was shown to be tight in the case of the Ising model~\cite{BleRuiZag95,EvaKenPerSch00} where it was shown that non-reconstruction holds when $\theta^2 (d-1) \leq 1$.  Similar results were derived for the  Ising model with small external field~\cite{BorChaMosRoc06} and the 3-state Potts model~\cite{Sly09} which constitute the only models for which exact thresholds are known.  On the other hand, for the hardcore model $\theta^2 (d-1)= (1+o(1))\frac1d \ln^2 d$ and thus at least when $d$ is large, the Kesten-Stigum bound is known not to be tight~\cite{BriWin04}.


In both the coloring model and the hardcore model the reconstruction
threshold is far from the Kesten-Stigum bound for large $d$.
In the coloring
model close to optimal bounds on the reconstruction threshold~\cite{BhaVerVigWei11,Sly09a} were obtained by
first showing that, when $n$ is small, the information on the root is
sufficiently small.
Then  a quantitative version
of~\cite{JanMos03} establishes that the information on the root converges to
0 exponentially quickly.  In this work, we show that the hardcore model behaves similarly.

\subsubsection{Replica Symmetry Breaking and Finding Large Independent Sets}

The reconstruction problem plays a deep role in the geometry of the
space of solutions of random CSPs.  While
for problems with few constraints the space of solutions is connected
and finding solutions is generally easy, as the number of constraints
increases the space may break into exponentially many small clusters.
Physicists, using powerful but non-rigorous ``replica symmetry
breaking'' heuristics, predicted that the clustering phase transition
exactly coincides with the reconstruction region on the associated
tree model~\cite{MezMon06,KrzMonRicSemZde07}.
This picture was rigorously established (up to first order terms) for the coloring and
satisfiability problems~\cite{AchCoj08} and further extended to sparse
random graphs by~\cite{MonResTet11}.
When solutions are far apart,
local search algorithms will in general fail. Indeed for both the
coloring and SAT models, no algorithm is known to find solutions in
the clustered phase.  It has been conjectured to be computationally
intractable beyond this phase transition \cite{AchCoj08}.

Previous results \cite{GerMon07,MonResTet11} have related the reconstruction problem on the Poisson tree with constant expected degree with
reconstruction in sparse random graph ensembles. These results
established a ``replica'' condition saying that
the empirical distribution of pairs of spins at a vertex from two
independent configurations are from a product measure.  This does not
apply in the case of the hardcore model  since the degree of a vertex affects its probability of being in
the independent set. At the same time, for the $d$-regular random graph the methods of \cite{MonResTet11} do not seem to be directly applicable and we approach the problem instead using the theory of local weak convergence of Gibbs measures.

The associated CSP for the hardcore model  corresponds to finding
large independent sets in random $d$-regular graphs. The replica symmetric
heuristics again predict that the space of large independent sets
should be clustered in the reconstruction regime.  Specifically this
refers to independent sets of size $\alpha n$ where $\alpha > \alpha_R$, the
density of 1's in the hardcore model at the reconstruction threshold, and roughly half the density of the largest independent set~\cite{FriLuc:92}. On the other hand the best
known algorithm finds independent sets only of density $\frac{(1+o(1))\ln
  d}{d}$ which is equal to $\alpha_R $ asymptotically as $d \to \infty$~\cite{Wor95}.  This is
consistent with the physics predictions and it was shown that on Erd\H{o}s-Renyi random graphs, independent sets exhibit the same
clustering phenomena~\cite{CojEft:11} as colorings and SAT \cite{AchCoj08, KrzMonRicSemZde07} at the reconstruction
threshold and one would expect this to also be the case for random regular graphs.
By determining the reconstruction threshold on such graphs we provide further evidence supporting the computational hardness of finding large independent sets in random graphs.

Sufficiently close to the satisfiability threshold many CSPs including the hardcore model are believed to undergo an additional  phase transition called the condensation~\cite{KrzMonRicSemZde07}.  Beyond this transition the second moment method fails and the distribution places most of its weight on a constant number of clusters \cite{BapCojHetRasVil14}.  After the condensation transition it is believed that the hardcore measure no longer converges locally to $\P_{\Tree_d}$ explaining the necessity of an upper bound on  $\lambda$ in our theorems.

\subsection{Local Weak Convergence}\label{s:localWeakConv}

There are a number of natural notions of local weak convergence of Gibbs measures and we introduce these now, following the notation used in~\cite{MonMosSly12}.
Let $\T_d$ denote the space of hardcore Gibbs measures on
$\Tree_d$ endowed with the topology of weak convergence
and let $\M_d$  to be the space
of probability measures over $\T_d$.
For a sequence of graphs $G_n$ we denote a hardcore measure by $\mu_n$
while $\nu$ denotes a hardcore measure on $\Tree_d$.
The notation $\Tree_d(L)$ will denote the restriction of the tree $\Tree_d$ to a ball of radius $L$ around the root (and by abuse of notation, we also use it to denote the set of vertices of the restriction). The shorthand $\mu_n^{L}$ or $\nu^{L}$ denote the restrictions of the corresponding measures to a ball of radius $L$.  
For a measure on Gibbs measures $\mm\in \M_d$, we let $\mm^L$
denote the measure on the space of measures on $\{0,1\}^{\Tree_d(L)}$ induced by such projections.

\begin{definition}
Consider a sequence of graphs-Gibbs measure
pairs  $\{(G_n,\mu_n)\}_{n\in\mathbb N}$ and for $v \in V(G_n)$, let
$\p_n^L(v)$ denote the law of the pair
$(B_v(L),\sigma(B_v(L)))$ when $\sigma$ is drawn with distribution $\mu_n$. Let $U_n$ denote the uniform measure over a random vertex $ u \in V(G_n)$.
Let $\p_n^L = \E_{U_n}(\p_n^L(u))$ denote the average of $\p_n^L(u)$. Let $\delta_{\Tree_d(L)}$ denote the Dirac measure on graphs which is $1$ on $\Tree_d(L)$.
\begin{enumerate}
\item[A.] The first mode of convergence concerns picking a random vertex $u$ and a random local configuration in the neighbourhood of $u$.  Formally, for $\limMes\in \mathcal{T}_d$ we say that $\{\mu_n\}_{n\in\mathbb N}$
{\em converges locally on average to} $\limMes$
if for any $L$,
\begin{eqnarray}
\lim_{n\to\infty} d_{\sTV}\left(\p_n^L, \delta_{\Tree_d(L)} \times \limMes^L \right) = 0.
\end{eqnarray}
\item[B.] A stronger form of convergence involves picking a random vertex $u$ and the associated random local measure $\p_n^L(u)$ and asking if this distribution of distributions converges. Formally, we say that {\em the local distributions of} $\{\mu_n\}_{n\in\mathbb N}$
{\em converge locally to} $\mathfrak{m}\in\mathcal{M}_d$ if it holds that the law of
$\p_n^L(u)$ converges weakly to $\delta_{\Tree_d(L)} \times \mathfrak{m}^L$ for all $L$.
\item[C.] If $\mathfrak{m}$ is a point mass on $\limMes\in \mathcal{T}_d$ and if the local distributions of $\{\mu_n\}_{n\in\mathbb N}$
converge locally to $\mathfrak{m}$
then we say that $\{\mu_n\}_{n\in\mathbb N}$
{\em converges in probability locally to} $\limMes$.  Equivalently convergence in probability locally to $\limMes$ says that for any $L$ and any $\varepsilon > 0$ it holds that
\begin{eqnarray}
\lim_{n\to\infty} U_n \left(d_{\sTV}(\p_n^L(u), \delta_{\Tree_d(L)} \times \nu^L \right) > \varepsilon) = 0.
\end{eqnarray}
\end{enumerate}
\end{definition}
\begin{remark}\label{rem:remarks-local-convergence}
As noted in \cite{MonMosSly12}, $C \Rightarrow B \Rightarrow A$ while in~\cite{SlySun12} it is noted that
if the measures $\nu$ are {\em extremal} Gibbs measures then the three notions of convergence $A$, $B$ and $C$ are equivalent.
\end{remark}

At a high level, convergence locally on average to $\nu$ means that after averaging the local distribution of configurations over all the vertices, the random configuration converges weakly to $\nu$ while convergence in probability locally to $\nu$ means that the local distribution at almost every vertex is close to $\nu$ eventually.
As noted above the former is a weaker condition and is in fact much simpler to prove. One can apply the second moment method for the hardcore model on the random $d$-regular graph for a large range of $\lambda$ to relate the hardcore measure to its planted version  where one first chooses a random independent set and then constructs a uniformly chosen  graph compatible with the set.  By exploring the graph in the planted measure by progressively revealing its edges one can show convergence locally on average to the measure $\P_{\Tree_d}$ and via the second moment method this can be extended to the original hardcore distribution.  This argument does not imply the stronger local convergence in probability and indeed, if one assumes the picture developed in
statistical physics, in the condensation phase one expects local convergence of type A but not convergence of type B or C.

In order to investigate the reconstruction problem it is necessary to work with local convergence in probability.
Much of the work of the paper involves showing how the second moment method can be used to imply  this stronger notion of convergence. Thus, our proof shows that for the hardcore model, up to the fugacity for which the second moment method holds, the notions A and C of local convergence of measures are equivalent. Our methods are quite general and should apply to a broad range of CSPs and Gibbs measures on graphs. Roughly speaking, one would need to show a corresponding bound on the second moment of the partition function and concavity of the log-partition function. One would also need to show that the partition function changes by a bounded amount when an edge is added and as such, our method should be applicable to non-zero temperature models.

\subsection{Outline of the proof}
We begin by establishing a lower bound on the reconstruction threshold for the $d$-regular tree in Section \ref{sec:tree-reconst}, proving Theorem~\ref{thm:tree-non-reconst}.  We show  that when $\alpha$ is bounded by the lower bound in \eqref{e:omegaBound} then even for a tree of depth
3 there is already significant loss of information of the spin at the
root.  In particular we show that if the spin of the root was 1 then the typical
posterior probability that the spin of the root is 1 given the spins at level 3
will be less than $\frac12$. The result is completed by linearizing
the  tree posterior probability recursion similarly to~\cite{BorChaMosRoc06,Sly09}.
In this part of the proof we closely follow the analysis
of~\cite{BorChaMosRoc06} who analyzed the reconstruction problem for the Ising
model with small external field. We do not require the full strength
of their analysis, however, as in our case we are far from the Kesten-Stigum
bound.
We show that a quantity referred to as the \emph{magnetization} decays
exponentially fast to~0. The magnetization provides a bound on the
posterior probabilities and this completes the result.

The $\ln \ln d$ term in our bound on $\lambda_R$ in Theorem \ref{thm:tree-non-reconst} is explained as the
first point at which there is significant decay of information at
level 3 on the tree.  In particular the analysis in
Proposition~\ref{prop:expected-small} part $c)$ is essentially tight.
It may be possible to get improved bounds by considering higher depth
trees although the description of the posterior distribution
necessarily becomes more complex.  A sharper analysis of this sort was
done in~\cite{Sly09a} for the coloring model although the method
there made crucial use of the symmetry of the states.

The bulk of the paper concerns proving local weak convergence to $\P_{\Tree_d}$ for the hardcore model on the random $d$-regular graph and this is shown in Theorem \ref{thm:graph-tree-reconst-equiv} in Section \ref{sec:local-weak-convergence}.  Our main tool is a new approach to the use of the second moment method. We select say, $n^{\frac 35}$ randomly chosen vertices in the $d$-regular random graph, and consider a ``punctured" graph with the local neighborhoods of these vertices removed. The punctured graph is used to study the partition function of the original graph conditional on the configuration of the boundaries of these neighborhoods. The second moment method in combination with Azuma's inequality implies that the partition function conditioned on a boundary configuration is within a multiplicative factor of $\exp(O(n^{\frac 12+\varepsilon}))$ of the expected partition function.
We prove convergence in probability locally by showing that it is extremely unlikely that a constant fraction of the $n^{\frac 35}$ randomly chosen vertices have a local measure which is far from the translation invariant measure on the tree. Indeed, we show that this would entail the existence of a set of configurations on the set of boundary vertices which has a constant probability under the hardcore measure but expected probability of only $\exp(-cn^{\frac 35})$.  In Proposition~\ref{prop:E1-E2} we show that this is precluded by the second moment method. 

One strength of our approach is that it does not require the detailed calculations of the small graph conditioning method. In many spin systems, including the one studied here, the ratio of the second moment of the partition function to the square of the first moment tends to a value $>1$ and so the second moment method cannot be used to estimate the partition function with probability tending to $1$. In this case, small graph conditioning can be used to give estimates on the partition function \cite{Wor99}.

The first and second moments of the hardcore partition function for a $d$-regular random graph are derived in Section \ref{sec:partition-fn-G} while the calculations for the punctured random graph appear in Section \ref{sec:tildeG}. The remaining proof involves establishing the requisite bound on the second moment itself.  This involves determining the maximum of a function which corresponds to the expected number of pairs of independent sets in a random regular graph with a given overlap between them. In Proposition \ref{prop:f-global-max}, which is proved in Section \ref{sec:tech-lemmas}, we consider the scaled log-partition function, determine its maximum and show that it decays quadratically near its maximum. This is a key fact used in relating the first and second moments of the partition functions of the random graph in Section \ref{sec:tildeG}.

\section{Upper bound on the reconstruction threshold on the tree}\label{sec:tree-reconst}

In this section we present the proof of Theorem~\ref{thm:tree-non-reconst}. We start by noting that for any finite restriction of $T_d$ to its first $n$ levels, we can use the Markov matrix $M$ as before to generate an independent set from the hardcore measure by setting the spin of the root to be occupied with probability $\alpha$ and then applying the matrix as before to generate the spins at the children recursively until we reach the leaves of the tree.

We define the following quantities which are related to the transition probabilities of the Markov matrix $M$. Let
\begin{eqnarray}
\nonumber \pi_{01} = \frac{1-\al}{\al}, \ \ \ \ \ \
\ \D \coloneqq \pi_{01} -1 = \frac{1-2\al}{\al}, \ \mathrm{and}
\\ \nonumber \theta \coloneqq
p_{00} -p_{10} = p_{11}-p_{01} = - \frac{\al}{1-\al}.
\end{eqnarray}

As mentioned in the introduction, $\theta$, the second
eigenvalue of $M$, plays a particularly important role in the reconstruction problem.

For ease of notation, we will establish non-reconstruction for the model $(\widetilde T_d,M)$ where $\widetilde T_d$ is the $d$-ary tree
(where each vertex has $d$ children) rather than on the $d$-regular tree.
It is not difficult to modify the recursion we will obtain for the
$d$-ary tree to a
recursion for the $(d+1)$-regular tree, showing that non-reconstruction
also holds in that case.
Finally, we can show that non-reconstruction on the $d$-regular tree
is equivalent to non-reconstruction on the $(d+1)$-regular tree once we
note that in equation \eqref{e:omegaBound} we have that
$\al_R(d+1)-\al_R(d) =
o(d)$ so the difference can be absorbed in the error term.
We will use $T$ to denote a finite tree whose root will be denoted $x$. 
Let  $\p^1_{T},\E^1_{T}$ (and resp. $\p^0_{T},\E^0_{T}$ and
$\p_{T},\E_{T}$) denote the probabilities and expectations with respect to the
measure on the leaves of $T$ obtained by conditioning on the root $x$ to be 1 (resp. 0, and
stationary). Let $L=L(n)$ denote the set of vertices of $T$ at depth $n$ and let
$\sigma(L)=\sigma(L(n))$ denote the configuration on level $n$.  We
will write $\p_{T}(\cdot|\sigma(L)=A)$ to denote the measure
conditioned on the leaves being in state $A\in\{0,1\}^{L(n)}$.

As in~\cite{BorChaMosRoc06} we analyze the  {\em
  weighted magnetization of the root} of $T$ which is a function of the random configuration the vertices at distance $n$ from the root and defined as follows:
\begin{eqnarray}\nonumber
X  = X(n) & := &(1-\alpha)^{-1}[(1-\alpha)\p_T(\sigma_x=1|\sigma(L)) - \alpha\p_T(\sigma_x=0|\sigma(L))] \\
\label{eq:mag2} & = &
\frac{1}{\pi_{01}}\left(\frac{\p_T(\sigma_x=1| \sigma(L))}{\alpha}-1\right).
\end{eqnarray}
Notice that since $\E_{T}(\p_T(\sigma_{x}=1| \sigma(L))) = \p_T(\sigma_x=1)=\alpha$, by \eqref{eq:mag2}, we
have that $\E_T(X)=0$. Also, from the first line of \eqref{eq:mag2}, it can be verified that $X \leq 1$ since $\p_T(\sigma_{x}=1|
  \sigma(L)=A) \leq
1$ for any $A$. We will also make use of the following second moments of the
magnetization.
\begin{eqnarray}\nonumber
\overline X = \overline X(n):=\E_{T}(X^2), \ \ \ \overline X_1 = \overline X_1(n):= \E_{T}^1(X^2), \ \ \
\overline X_0 = \overline X_0(n):= \E_{T}^0(X^2)
\end{eqnarray}
The following equivalent definition of non-reconstruction is
well known and follows from the definition in \eqref{eq:tree-non-recon} using
\eqref{eq:mag2}.
\begin{proposition}\label{p:bar-x-non-reconst} Non-reconstruction for
  the model $({\widetilde T_d},M)$ is equivalent
  to $$\lim_{n \rightarrow \infty} \overline X = 0.$$
\end{proposition}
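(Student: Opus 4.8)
The plan is to show the equivalence of $\lim_{n\to\infty}\overline X(n)=0$ with the non-reconstruction condition \eqref{eq:tree-non-recon}, namely that $\P_{\widetilde T_d}(\sigma_\rho=1\mid\sigma(L))\to\alpha$ in probability. The bridge between the two is the identity in \eqref{eq:mag2}, which expresses $X(n)$ as an affine function of the posterior probability $\P_T(\sigma_x=1\mid\sigma(L))$: explicitly $\P_T(\sigma_x=1\mid\sigma(L)) = \alpha(1+\pi_{01}X(n))$. Since $\alpha$ and $\pi_{01}$ are fixed positive constants, convergence of the posterior to $\alpha$ in probability is exactly convergence of $X(n)$ to $0$ in probability. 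So the content of the proposition reduces to: $X(n)\to 0$ in probability if and only if $\E_T(X(n)^2)\to 0$.

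The direction ``$\overline X\to 0$ implies non-reconstruction'' is immediate from Chebyshev/Markov: since $\E_T(X)=0$, we have $\P(|X|>\varepsilon)\le \overline X/\varepsilon^2\to 0$, giving convergence in probability. The reverse direction is where one must use a little structure. The key observation is that $X(n)$ is a \emph{bounded} random variable: from the first line of \eqref{eq:mag2}, $X\le 1$, and one checks similarly a matching lower bound $X \ge -\alpha/(1-\alpha) \cdot (1/(1-\alpha))^{-1}\cdot\ldots$ — more simply, $(1-\alpha)\P_T(\sigma_x=1\mid\sigma(L))-\alpha\P_T(\sigma_x=0\mid\sigma(L))$ lies in $[-\alpha,1-\alpha]$, so after dividing by $(1-\alpha)$ we get $X\in[-\tfrac{\alpha}{1-\alpha},1]$, a fixed compact interval. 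A bounded sequence of random variables converging to $0$ in probability also converges in $L^2$ (by the bounded convergence theorem, or by splitting the expectation of $X^2$ over the events $\{|X|\le\varepsilon\}$ and $\{|X|>\varepsilon\}$ and using the uniform bound $|X|\le C$ on the latter, whose probability vanishes). Hence $\overline X(n)=\E_T(X(n)^2)\to 0$.

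One wrinkle worth spelling out: the definition \eqref{eq:tree-non-recon} of non-reconstruction is phrased on the $d$-regular tree $\Tree_d$, whereas the proposition and the magnetization analysis are carried out on the $d$-ary tree $\widetilde T_d$; but as the text already notes, non-reconstruction transfers between $\widetilde T_d$, $\Tree_{d+1}$ and $\Tree_d$ (the relevant recursions differ only at the root, and $\alpha_R(d+1)-\alpha_R(d)$ is within the error term), so it suffices to argue everything for $(\widetilde T_d,M)$, where $L=L(n)$ is level $n$ and $X(n)$ is the weighted magnetization of its root. I do not expect any genuine obstacle here — the statement is essentially a dictionary entry translating ``posterior concentrates at the prior'' into ``second moment of a normalized, mean-zero, bounded statistic vanishes,'' and the only thing to be careful about is recording the two-sided boundedness of $X$ so that the bounded convergence argument applies in the nontrivial direction.
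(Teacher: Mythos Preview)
Your argument is correct and is exactly what the paper has in mind: the paper does not give a proof but simply remarks that the equivalence ``is well known and follows from the definition in \eqref{eq:tree-non-recon} using \eqref{eq:mag2},'' and your proof spells out precisely that derivation (affine relation between $X$ and the posterior, Chebyshev for one direction, boundedness plus bounded convergence for the other). The final paragraph about transferring between $\widetilde T_d$ and $\Tree_d$ is unnecessary here, since the proposition is stated directly for $(\widetilde T_d,M)$.
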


In the remainder of the proof we derive bounds for $\overline
X$. We begin by showing that already for a 3 level tree, $\overline X$
becomes small.  Then we establish a recurrence along the lines of
\cite{BorChaMosRoc06} that shows that once $\overline X$ is sufficiently small,
it must converge to 0.  As this part of the derivation follows the
calculation in \cite{BorChaMosRoc06} we will adopt their notation in
places.  Non-reconstruction is then a consequence of
Proposition~\ref{p:bar-x-non-reconst}.  In the next lemma we
determine some basic properties of $X$.

\begin{lemma}\label{lem:basic-relations} For any $n \ge 1$, the following relations hold:
\begin{enumerate}[a)]
\item $\E_{T}(X) = \alpha\E^1_{T}(X) + (1-\alpha)\E^0_{T}(X).$
\item $\overline X = \alpha\overline X_1 + (1-\alpha) \overline X_0.$
\item $\E_{T}^1(X) = \pi_{01} \overline X$ and $\E_{T}^0(X) = - \overline X.$
\end{enumerate}
\end{lemma}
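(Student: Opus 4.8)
The plan is to reduce everything to two elementary facts: the law of the leaf configuration $\sigma(L)$ under the stationary measure is a mixture of its laws under the two boundary conditions at the root, and Bayes' rule for the root posterior. Throughout write $q(A)\defeq \p_T(\sigma_x=1\mid\sigma(L)=A)$, so that by \eqref{eq:mag2} the magnetization is the explicit function $X=X(A)=\pi_{01}^{-1}\bigl(q(A)/\alpha-1\bigr)$ of the leaf configuration $A$, equivalently $q(A)=\alpha\bigl(1+\pi_{01}X(A)\bigr)$.

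For parts a) and b), I would first observe that since the root spin is $1$ with probability $\alpha$ and $0$ with probability $1-\alpha$ under the stationary measure, the law of total probability gives, for every $A\in\{0,1\}^{L}$,
\[
\p_T(\sigma(L)=A)=\alpha\,\p_T^1(\sigma(L)=A)+(1-\alpha)\,\p_T^0(\sigma(L)=A).
\]
Since $X$ and $X^2$ are functions of $\sigma(L)$ alone, integrating $X$ and then $X^2$ against this mixture yields a) and b) immediately.

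For part c), the key step is Bayes' rule, which for every leaf configuration $A$ reads
\[
\p_T^1(\sigma(L)=A)=\frac{q(A)\,\p_T(\sigma(L)=A)}{\alpha},\qquad
\p_T^0(\sigma(L)=A)=\frac{(1-q(A))\,\p_T(\sigma(L)=A)}{1-\alpha}.
\]
Summing against $X$ gives $\E_T^1(X)=\alpha^{-1}\E_T(qX)$ and $\E_T^0(X)=(1-\alpha)^{-1}\E_T((1-q)X)$. Substituting $q=\alpha(1+\pi_{01}X)$ into the first identity produces $\E_T^1(X)=\E_T(X+\pi_{01}X^2)=\pi_{01}\overline X$, using $\E_T(X)=0$. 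For the second, the relation $\alpha\pi_{01}=1-\alpha$ gives $1-q=(1-\alpha)(1-X)$, whence $\E_T^0(X)=\E_T((1-X)X)=-\overline X$, again using $\E_T(X)=0$. As a sanity check, plugging these into a) recovers $\E_T(X)=\alpha\pi_{01}\overline X-(1-\alpha)\overline X=0$.

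There is no substantial obstacle here: the only points requiring care are the bookkeeping in Bayes' rule and the algebraic identity $q=\alpha(1+\pi_{01}X)$, which is merely a rearrangement of the definition \eqref{eq:mag2} of $X$; the rest is the law of total expectation together with the already-noted fact $\E_T(X)=0$.
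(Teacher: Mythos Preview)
Your proof is correct and follows essentially the same approach as the paper: parts a) and b) via the law of total probability applied to the leaf-measurable variable $X$ (resp.\ $X^2$), and part c) via Bayes' rule to rewrite $\p_T^1(\sigma(L)=A)$ in terms of the posterior $q(A)$. The only cosmetic difference is that you obtain $\E_T^0(X)=-\overline X$ by a second direct Bayes computation, whereas the paper deduces it from the first identity together with a) and $\E_T(X)=0$; your substitution $q=\alpha(1+\pi_{01}X)$ is in fact a slightly cleaner packaging of the same calculation the paper carries out with explicit sums.
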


\begin{proof}
Note that for any random variable $Y = Y(A)$ which depends only on the states at
the leaves, we have $E_{T}(Y) = \alpha\E^1_{T}(Y) +
(1-\alpha)\E^0_{T}(Y)$. Parts $a)$ and $b)$ therefore follow since $X$ is a random
variable that is a function of the states at the leaves.
For part $c)$ we proceed as follows. The first and last equalities
below follow from \eqref{eq:mag2}.
\begin{eqnarray}
\nonumber \E_{T}^1(X) & = &
\pi_{01}^{-1}\displaystyle\sum_{A}\p_{T}(\sigma(L)=A|\sigma_x=1)\left(
\frac{\p_{T}(\sigma_x=1| \sigma(L)=A)}{\alpha}-1 \right) \\
\nonumber & = &
\pi_{01}^{-1} \displaystyle\sum_{A}\p_{T}(\sigma(L)=A)\frac{\p_{T}(\sigma_x=1| \sigma(L)=A)}{\alpha}\left(
\frac{\p_{T}(\sigma_x=1| \sigma(L)=A)}{\alpha} -1 \right) \\
\nonumber & = &
\pi_{01}^{-1}\left( \frac{\E_{T}((\p_{T}(\sigma_x=1| \sigma(L)))^2)}{\alpha^2} -1
\right) \\
\nonumber & = & \pi_{01}\E_T(X^2)
\end{eqnarray}
The second part of $c)$ follows by combining this with $a)$ and the fact that $\E_T(X)=0$.
\end{proof}

The following proposition estimates typical posterior probabilities
which we will use to bound $\overline X$.  Let $T(n)$ denote the tree which is the restriction of $\widetilde T_d$ to its first $n$ levels. For a finite tree $T$, let $T^i$ be the subtrees rooted at the
children of the root $u_i$.

\begin{proposition}\label{prop:expected-small}
For a finite $d$-ary tree $T$ we have that
\begin{enumerate}[a)]
\item For any configuration at the leaves
  $A=(A_1,\cdots,A_d)$,
\begin{eqnarray}
\nonumber \p_{T}(\sigma_x=0|\sigma(L)=A)=\Bigl({1+\lambda
  \prod_{i}\p_{T^i}(\sigma_{u_i}=0 | \sigma_{L_i} = A_i)}\Bigr)^{-1}.
\end{eqnarray}

\item Let $\mathcal G$ be the set of leaf
  configurations
\begin{eqnarray}
\nonumber \mathcal G = \left\{\sigma(L) \ | \ \p_T(\sigma_x=0|\sigma(L)) =
\frac{1}{2}\left(1+\frac{1}{1+2\lambda}\right) \right\}.
\end{eqnarray}
Then
\begin{eqnarray}
\nonumber \frac{\p^0_T(\sigma(L) \in \mathcal G)}{\p^1_T(\sigma(L) \in
    \mathcal G)} =
\frac{\alpha}{1-\alpha}\frac{1+\lambda}{\lambda}.
\end{eqnarray}

\item Let $\beta>\ln 2 - \ln \ln 2$ and $\alpha = \frac1d\big(\ln d + \ln
  \ln d - \ln
  \ln \ln d -\beta \big)$. Then in the 3-level $d$-ary tree $T(3)$ we have that
  $$\E^1_{T(3)}(\p(\sigma_\rho=1 | \sigma(L))) \leq \frac{1}{2}.$$

\end{enumerate}

\end{proposition}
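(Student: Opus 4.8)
The plan is to combine parts a) and b) to compute $\E^1_{T(3)}(\p(\sigma_\rho=1\mid\sigma(L)))$ explicitly, then estimate the resulting sum. Let me think about the structure.

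We have a 3-level $d$-ary tree. The root $\rho$ has $d$ children $u_1,\dots,u_d$, each child $u_i$ is the root of a 2-level $d$-ary subtree $T^i$. By part a), $\p_{T(3)}(\sigma_\rho=0\mid\sigma(L))=\bigl(1+\lambda\prod_i \p_{T^i}(\sigma_{u_i}=0\mid\sigma_{L_i})\bigr)^{-1}$, so the quantity we want depends only on the $d$ random variables $Y_i := \p_{T^i}(\sigma_{u_i}=0\mid\sigma_{L_i})\in[0,1]$.

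First, I need the distribution of each $Y_i$ under $\P^1$, i.e. conditioned on $\sigma_\rho=1$. If $\sigma_\rho=1$ then every $u_i$ must be $0$, so the leaves of $T^i$ are generated by starting $u_i$ in state $0$ and applying $M$ twice. By part b) applied to the 2-level tree (root $u_i$, leaves at level 2), the posterior $Y_i$ takes the special value $y^*:=\tfrac12(1+\tfrac1{1+2\lambda})$ with some probability; but more importantly I expect the sharp threshold behaviour to come from the event that $Y_i$ is \emph{close to} $1$ versus bounded away from $1$. The key point from part b) is the likelihood ratio identity: conditioning a 2-level subtree root to be $1$ rather than $0$ reweights leaf configurations by a factor of order $\lambda/\alpha\sim\lambda d/\ln d$ on the event $\mathcal G$. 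The heuristic is: for $u_i$ started at $0$, each grandchild at level 2 is occupied with probability $\approx\alpha$; the probability that \emph{some} occupied grandchild forces $u_i$'s posterior to be $\approx 1$ (full information) is $\approx 1-(1-\alpha)^{d}\approx 1-e^{-d\alpha}$. With $\alpha=\tfrac1d(\ln d+\ln\ln d-\ln\ln\ln d-\beta)$ we get $d\alpha=\ln d+\ln\ln d-\ln\ln\ln d-\beta$, so $e^{-d\alpha}=e^{\beta}\tfrac{\ln\ln d}{d\ln d}$, a quantity going to $0$. So with probability $\to 1$ each $Y_i$ is essentially $1$, and then $\prod_i Y_i\approx 1$, giving $\p(\sigma_\rho=1\mid\sigma(L))\approx\tfrac{\lambda}{1+\lambda}$, which for the relevant $\lambda$ is well below $\tfrac12$. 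The contributions from the $o(1)$-probability bad events must be shown not to push the expectation above $\tfrac12$; this is where the precise constant $\beta>\ln 2-\ln\ln 2$ enters and where part b)'s identity is used to control the reweighting to $\P^1$.

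Concretely the steps I would carry out: (1) reduce via part a) to analysing $\E^1\bigl[\tfrac{\lambda\prod Y_i}{1+\lambda\prod Y_i}\bigr]$; (2) under $\P^1$, the $Y_i$ are i.i.d., so identify their common law — decompose on whether $T^i$ has a level-2 vertex that ``reveals'' $u_i$ (forcing $Y_i$ near $1$) or not; (3) bound $\p(\sigma_\rho=1\mid\sigma(L))$ crudely by $\tfrac{\lambda}{1+\lambda}\le\tfrac12\cdot\tfrac{2\lambda}{1+\lambda}$ on the ``all $Y_i\approx 1$'' event and by $1$ otherwise, but this crude bound is not quite enough, so instead (3$'$) split $\{0,1,\dots,d\}$ according to $k$, the number of indices $i$ with $Y_i$ \emph{not} close to $1$, show the dominant term is $k=0$ and that the tail in $k$ is governed by $\binom{d}{k}(e^{-d\alpha})^k$ which is summable and small; (4) plug in the value of $\alpha$ and check that the total is $\le\tfrac12$ precisely when $\beta>\ln2-\ln\ln2$, i.e. the first-order term $e^{-d\alpha}\cdot(\text{something})$ balances against $\tfrac12-\tfrac{\lambda}{1+\lambda}$. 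The main obstacle is Step (2)–(3$'$): making the heuristic ``$Y_i$ is either $\approx 1$ or harmless'' rigorous requires a careful case analysis of the 2-level posterior and an honest accounting (via the likelihood-ratio identity of part b), or a direct computation) of how much mass the $\P^1$-conditioning shifts onto the configurations where $\prod Y_i$ is small; getting the constant in front of $e^{-d\alpha}$ exactly right is what forces the sharp threshold $\beta=\ln2-\ln\ln2$, and the remark after the proposition (``essentially tight'') signals that this balancing has no slack to spare.
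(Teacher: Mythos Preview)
Your central heuristic is backwards, and this is a genuine gap. You write that ``$\prod_i Y_i\approx 1$, giving $\P(\sigma_\rho=1\mid\sigma(L))\approx\tfrac{\lambda}{1+\lambda}$, which for the relevant $\lambda$ is well below $\tfrac12$.'' But for the stated $\alpha$ one has $\lambda = (1+o_d(1))\,e^{-\beta}\ln^2 d/\ln\ln d \to \infty$, so $\lambda/(1+\lambda)\to 1$, not below $1/2$. Thus the event ``all $Y_i\approx 1$'' is precisely the \emph{bad} event, and any argument that rests on it being the dominant one cannot give the bound. Relatedly, your claim that ``some occupied grandchild forces $u_i$'s posterior to be $\approx 1$'' is not right: we observe only the leaves, not the level-$2$ vertices, so an occupied child $v_j$ of $u_i$ is not witnessed directly; what matters is how many of the depth-$1$ subtrees below $u_i$ have all-zero leaves.

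The paper's mechanism is the opposite of yours. Under $\P^0_{T(2)}$, the event $\mathcal G$ (where $Y_i=\tfrac12(1+\tfrac1{1+2\lambda})\approx\tfrac12$) has probability $p\ge (1-o_d(1))\,e^{\beta}\ln\ln d/d$; this is obtained from part~b) together with the explicit computation of $\P^1_{T(2)}(\mathcal G)$. Although $p\to 0$, the number of indices $i\le d$ with $\sigma(L_i)\in\mathcal G$ is $\mathrm{Bin}(d,p)$ with mean $\approx e^{\beta}\ln\ln d\to\infty$, so by Chernoff at least $e^{\beta}\ln\ln d - 2\sqrt{e^{\beta}\ln\ln d}$ of the $Y_i$ are $\approx\tfrac12$ with probability $\ge 2/3$. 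Bounding the remaining $Y_i$ trivially by $1$ gives $\lambda\prod_i Y_i \lesssim \lambda\cdot 2^{-e^{\beta}\ln\ln d} = \lambda\,(\ln d)^{-e^{\beta}\ln 2}$, and since $\lambda\asymp \ln^2 d/\ln\ln d$ this tends to $0$ exactly when $e^{\beta}\ln 2>2$, i.e.\ $\beta>\ln 2-\ln\ln 2$. On that event $\P(\sigma_\rho=1\mid\sigma(L))\to 0$; the complementary probability $\le 1/3$ contributes at most $1/3$, yielding the bound $\le 1/2$. So the threshold comes not from balancing a small correction against $\tfrac12-\tfrac{\lambda}{1+\lambda}$ as you suggest, but from whether enough $Y_i\approx\tfrac12$ accumulate to overcome the large $\lambda$ in the recursion.
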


\begin{proof} Part $a)$ is a consequence of standard tree recursions
  for Markov models established using Bayes rule.
For part $b)$ first note that
\begin{eqnarray}\label{e:calARelation}
\p_T(\sigma_x=1 \ | \ \sigma(L) \in \mathcal G) & = & 1-\p_T(\sigma_x=0
  \ | \ \sigma(L) \in \mathcal G) \nonumber\\
& = & \frac{1}{2}\left(1-\frac{1}{1+2\lambda}\right)
\end{eqnarray}
Now,
\begin{eqnarray}
\nonumber \p^0_T(\sigma(L) \in \mathcal G)  & = & \frac{\p_T(\sigma_x=0 \ |
    \ \sigma(L) \in \mathcal G) \p_T( \sigma(L) \in \mathcal G)}{1-\alpha}\\
\nonumber & = &  \frac{\alpha}{1-\alpha} \frac{1+\lambda}{\lambda}
\left(\frac{\p_T(\sigma_x=1 \ |
    \ \sigma(L) \in \mathcal G) \p_T( \sigma(L) \in \mathcal
    G)}{\alpha}\right)\\
\nonumber & = & \frac{\alpha}{1-\alpha}
\frac{1+\lambda}{\lambda}\p^1_T(\sigma(L) \in \mathcal G)
\end{eqnarray}
where the first and third equations follow by definition of
conditional probabilities and the second follows
from \eqref{e:calARelation} and the definition of $\mathcal G$ which establishes $b)$.

For part $c)$, we start  by calculating the probability of certain
posterior probabilities for  trees of small depth. With our
assumption on $\al$ we have that 
\[
\lambda=\frac{\al}{1-2\al}\left(1+\frac{\al}{1-2\al}\right)^{d}=\frac{ (1+o_d(1))e^{-\beta}  \ln^2 d}{ \ln \ln d}
\]
Since $\sigma(L) = 0$ under $\p^1_{T(1)}$, by part $a)$ we have that
\[
\p^1_{T(1)}(\sigma_x=0|\sigma(L)) =  \frac{1}{1+\lambda}  \ w. p. \ 1.
\]

Also,
\[
\p_{T(1)}( \forall \ i, \ u_i=0 | \sigma_x=0) =
\left(\frac{1-2\al}{1-\al}\right)^d 
\]

Using the two equations above, we have that

\[
\p^0_{T(1)}(\sigma_x=0|\sigma(L)) = \left\{
\begin{array}{ll}
1 & \ w.p. \  1- \left(\frac{1-2\al}{1-\al}\right)^d\\
\frac{1}{1+\lambda} & \ w.p. \ \left(\frac{1-2\al}{1-\al}\right)^d.
\end{array}\right.
\]

The first case above corresponds to leaf configurations of the tree
$T(1)$ where at least one of the leaves is 1, while the second case
corresponds to the configurations where all the leaves are 0.
Next, applying part $a)$ to a tree of depth $2$, we have

\[
\p_{T(2)}^1(\sigma_x=0 | \sigma(L)) = \frac{1}{1+\lambda
  \prod_{i}\p_{T(1)}^0(\sigma_{u_i}=0 | \sigma(L))}
\]

Using this expression we can write down this conditional probability
based on the leaf configurations of the subtrees of the root of depth 1.

\begin{eqnarray}\label{eq:p-t-2}
\p_{T(2)}^1(\sigma_x=0 | \sigma(L)) = \left\{
\begin{array}{ll}
\frac{1}{1+\lambda} &
\ w.p.\ \left(1-\left(\frac{1-2\al}{1-\al}\right)^d\right)^d
\\
\frac{1}{2}\left(1+\frac{1}{1+2\lambda}\right) &
\ w.p. \ \left(1-\left(\frac{1-2\al}{1-\al}\right)^d\right)^{d-1}  \left(\frac{1-2\al}{1-\al}\right)^d d
\\
> \frac{1}{2}\left(1+\frac{1}{1+2\lambda}\right) & \ o.w.
\end{array}\right.
\end{eqnarray}

The first case above corresponds to the situation when each subtree of
the root of
depth 1 has a
leaf configuration where at least one of the leaves is 1. The second
case is when one of the $d$ subtrees has a leaf configuration where all
leaves are 0, while the remaining subtrees have leaf configurations
where at least one leaf is 1. The third case corresponds to the
remaining possibilities.

By part $b)$ with $\mathcal G$ as defined, and \eqref{eq:p-t-2} we
have that after substituting the expressions for $\lambda$ and
$\omega$,
\begin{eqnarray}
\p_{T(2)}^0(\sigma(L) \in \mathcal G) & = &
\frac{\alpha}{1-\alpha}\frac{1+\lambda}{\lambda} \p_{T(2)}^1[\sigma(L) \in
  \mathcal G] \nonumber\\
& = & \frac{\al(1+\lambda)}{\lambda(1-\al)}
\left(1-\left(\frac{1-2\al}{1-\al}\right)^d\right)^{d-1}
\left(\frac{1-2\al}{1-\al}\right)^d d\nonumber\\
\label{eq:bound-on-p} & \geq & (1-o_d(1))\frac{e^{\beta} \ln \ln k }{k}
\end{eqnarray}

We can now calculate the values of $P_{T(3)}^1(\sigma_x=0 | \sigma(L))$
as follows. By part $a)$

\[
\p_{T(3)}^1(\sigma_x=0 | \sigma(L))  = \frac{1}{1+\lambda
  \prod_{i}\p_{T(2)}^0(\sigma_{u_i}=0 | \sigma(L))}
\]

Denote
\[
p=\frac{\al(1+\lambda)}{\lambda(1-\al)}
\left(1-\left(\frac{1-2\al}{1-\al}\right)^d\right)^{d-1}
\left(\frac{1-2\al}{1-\al}\right)^d d
\]

Thus, $p$ is the probability that if we started with $\sigma_\rho=0$
in $T(2)$, the configuration at the leaves is from $\mathcal G$. If we
start with $\sigma_\rho=1$ in $T(3)$, the number subtrees of the root
with leaf configurations in $\mathcal G$ is distributed binomially and
will be about $dp$.
By Chernoff bounds, and the bound on $p$ from
\eqref{eq:bound-on-p}, $$\p\left(Bin(d,p)< e^{\beta}\ln \ln d -
2\sqrt{e^{\beta}\ln \ln d}\right)< \frac{1}{3}.$$ Finally, by the definition
of $\mathcal G$,

\[
\p_{T(2)}^0(\sigma_{u_i}=0 | \sigma(L) \in \mathcal G) =
\frac{1}{2}\left(1+\frac{1}{1+2\lambda}\right)
\]

and hence,

\begin{eqnarray*}
\E^1_{T(3)}(\p(\sigma_x=1 | \sigma(L))) & = & \E^1_{T(3)}(1-\p(\sigma_x=0 | \sigma(L)))\\
& \leq & \left(1- \frac{1}{1+\lambda
  (2(1-o_d(1)))^{-(e^\beta \ln \ln d - 2\sqrt{e^\beta \ln \ln d)}}} \right)\frac{2}{3}
+\ \frac{1}{3}
\end{eqnarray*}
By taking $d$ large enough above, we conclude that for $\beta$ as in the assumptions and
large enough $d$,
\[
\E^1_{T(3)}(\p(\sigma_x=1 | \sigma(L))) \leq \frac{1}{2}
\]
\end{proof}

\begin{lemma}\label{lem:finite-levels}Let $\beta>\ln 2 - \ln \ln 2$
  and $\al = \frac1d \big(\ln d + \ln \ln d - \ln
  \ln \ln d -\beta \big)$.  For $d$ large enough,
$$\overline X(3) \leq \frac{\al}{2}.$$
\end{lemma}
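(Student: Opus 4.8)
The plan is to reduce the claimed bound on $\overline X(3)$ to Proposition~\ref{prop:expected-small}(c) via a short exact computation using Lemma~\ref{lem:basic-relations}(c), and then close with an elementary inequality. The key observation is that $\overline X$ is determined by the single scalar $\E^1_{T}\!\big(\p_T(\sigma_x=1\mid\sigma(L))\big)$, which is precisely the quantity estimated in part $c)$ of the previous proposition.

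First I would compute $\E^1_T(X)$ in two ways. Taking $\E^1_T$ of the identity \eqref{eq:mag2}, $X=\pi_{01}^{-1}\big(\alpha^{-1}\p_T(\sigma_x=1\mid\sigma(L))-1\big)$, and using linearity gives
\[
\E^1_T(X)=\frac{1}{\pi_{01}}\left(\frac{\E^1_T\big(\p_T(\sigma_x=1\mid\sigma(L))\big)}{\alpha}-1\right).
\]
On the other hand, Lemma~\ref{lem:basic-relations}(c) gives $\E^1_T(X)=\pi_{01}\overline X$. Equating the two expressions and dividing by $\pi_{01}$ yields
\[
\overline X=\frac{1}{\pi_{01}^{2}}\left(\frac{\E^1_T\big(\p_T(\sigma_x=1\mid\sigma(L))\big)}{\alpha}-1\right).
\]

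Next I would specialize to $T=T(3)$ and invoke Proposition~\ref{prop:expected-small}(c), which under the hypotheses $\beta>\ln 2-\ln\ln 2$ and $\alpha=\tfrac1d(\ln d+\ln\ln d-\ln\ln\ln d-\beta)$ asserts $\E^1_{T(3)}\big(\p(\sigma_x=1\mid\sigma(L))\big)\le\tfrac12$ for $d$ large. Since $\alpha<\tfrac12$, the map $t\mapsto \alpha^{-1}t-1$ is increasing and positive at $t=\tfrac12$, so $\overline X(3)\le \pi_{01}^{-2}\big(\tfrac{1}{2\alpha}-1\big)=\pi_{01}^{-2}\cdot\tfrac{1-2\alpha}{2\alpha}$. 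Substituting $\pi_{01}=\tfrac{1-\alpha}{\alpha}$ gives $\overline X(3)\le \tfrac{\alpha(1-2\alpha)}{2(1-\alpha)^2}$, and since $(1-\alpha)^2=1-2\alpha+\alpha^2\ge 1-2\alpha$ we conclude $\overline X(3)\le\tfrac{\alpha}{2}$, as desired.

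The only real obstacle is upstream: all the content sits in Proposition~\ref{prop:expected-small}(c), the depth-$3$ tree estimate, where the precise logarithmic corrections in $\alpha$ and the threshold $\beta>\ln 2-\ln\ln 2$ enter through the Chernoff bound on the binomial count of depth-$2$ subtrees whose leaf configuration lies in $\mathcal G$. Granted that proposition, the passage from the posterior bound to the second-moment magnetization bound is just the linear-algebra identity above followed by the trivial inequality $(1-\alpha)^2\ge 1-2\alpha$, so no further work is needed here.
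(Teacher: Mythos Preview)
Your proof is correct and follows essentially the same route as the paper: both derive the identity $\overline X=\pi_{01}^{-2}\big(\alpha^{-1}\E^1_{T(3)}(\p(\sigma_x=1\mid\sigma(L)))-1\big)$ from Lemma~\ref{lem:basic-relations}(c) and equation~\eqref{eq:mag2}, then plug in the bound from Proposition~\ref{prop:expected-small}(c). You supply a little more detail on the final inequality (making explicit the substitution $\pi_{01}=(1-\alpha)/\alpha$ and the use of $(1-\alpha)^2\ge 1-2\alpha$), but the argument is otherwise identical.
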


\begin{proof}
By part $c)$ of Lemma \ref{lem:basic-relations}, and part $c)$ of
Proposition \ref{prop:expected-small},
\begin{eqnarray*}
\overline X(3) & = & \frac{1}{\pi_{01}^{2}} \left(
\frac{\E^1_{T_3}(\p(\sigma_x=1 \ | \ \sigma(L))) }{\alpha}- 1 \right) \\
& \leq & \frac{1}{\pi_{01}^{2}} \left(
\frac{1}{2\alpha}- 1 \right)\\
& \leq  & \frac{\al}{2}
\end{eqnarray*}
\end{proof}

Next, we present a recursion for $\overline X$ and complete the proof of
the main result. The development of the recursion follows the steps
in \cite{BorChaMosRoc06} closely so we follow their notation and omit some of
the calculations.

%

\begin{figure}[ht]
\begin{minipage}[b]{0.45\linewidth}
\centering
\includegraphics[width=9cm]{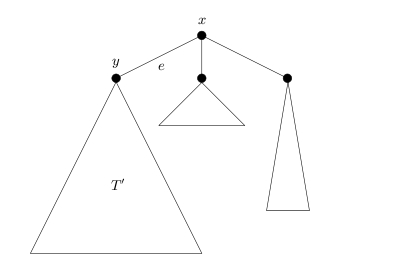}
\caption{A finite tree $T$}
\label{fig:finite-tree}
\end{minipage}
\hspace{0.5cm}
\begin{minipage}[b]{0.45\linewidth}
\centering
\includegraphics[width=10cm]{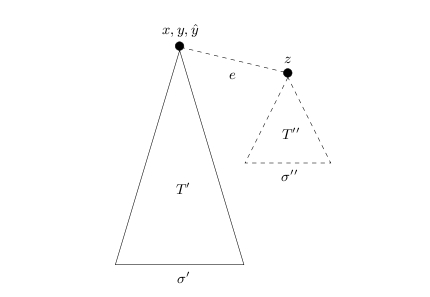}
\caption{The tree $T$ after obtained after merging $T'$ and $T''$. The
dashed subtree is $\hat T$.}
\label{fig:merge-tree}
\end{minipage}
\end{figure}

\subsubsection*{Magnetization of a child} Let $T$ be a finite tree with root $x$ as before. Let $y$ be a child of $x$ and let $T'$ be the subtree of $T$
rooted at $y$ (see Figure \ref{fig:finite-tree}). Let $A'$ be the restriction of $A$ to
the leaves of $T'$. Let $Y=Y(A')$ denote the magnetization of~$y$.

\begin{lemma} \label{lem:child-magnetization} We have
\begin{enumerate}[a)]
\item $\E^1_T(Y) = \theta \E^1_{T'}(Y)$ and $\E^0_T(Y) = \theta
  \E^0_{T'}(Y)$.
\item $\E^1_T(Y^2) = (1-\theta) \E_{T'}(Y^2)+ \theta \E_{T'}^1(Y^2).$
\item $\E^0_T(Y^2) = (1-\theta) \E_{T'}(Y^2)+ \theta \E_{T'}^0(Y^2).$
\end{enumerate}
\end{lemma}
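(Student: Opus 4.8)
The plan is to reduce all three identities to the one-step Markov structure across the edge $(x,y)$ together with the analogue of Lemma~\ref{lem:basic-relations} applied to the subtree $T'$. The starting point is a conditioning identity: since $Y=Y(A')$ depends only on the leaf configuration $A'$ of $T'$, and since in the Markov model on $T$ the conditional law of $A'$ given $\sigma_y=j$ coincides with $\p^j_{T'}$ (conditioning on $\sigma_y$ decouples $T'$ from the rest of $T$, and $\p_T(\sigma_y=j\mid\sigma_x=i)=p_{ij}$), we get, for every function $g$ of $A'$ and every $i\in\{0,1\}$,
\[
\E^i_T(g)=\sum_{j\in\{0,1\}}p_{ij}\,\E^j_{T'}(g).
\]
Using $p_{11}=0$, $p_{10}=1$ and $p_{01}=\tfrac{\al}{1-\al}$, $p_{00}=\tfrac{1-2\al}{1-\al}$, this reads $\E^1_T(g)=\E^0_{T'}(g)$ and $\E^0_T(g)=\tfrac{\al}{1-\al}\E^1_{T'}(g)+\tfrac{1-2\al}{1-\al}\E^0_{T'}(g)$.

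The key algebraic fact I would use is that $(\pi_{01},-1)$ (equivalently $(1-\al,-\al)$) is a right eigenvector of $M$ with eigenvalue $\theta$, i.e. $p_{i1}\pi_{01}-p_{i0}=\theta\bigl(\pi_{01}\mathbbm{1}_{i=1}-\mathbbm{1}_{i=0}\bigr)$ for $i\in\{0,1\}$; this is verified by reading off the two rows of $M$ and using $\pi_{01}\theta=-1$. For part a), apply part (c) of Lemma~\ref{lem:basic-relations} to $T'$ (with $Y$ in the role of $X$; recall $\E_{T'}(Y)=0$) to obtain $\bigl(\E^1_{T'}(Y),\E^0_{T'}(Y)\bigr)=\E_{T'}(Y^2)\cdot(\pi_{01},-1)$. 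Substituting into the conditioning identity and applying the eigenvector relation gives $\E^i_T(Y)=\theta\,\E^i_{T'}(Y)$ for $i=0,1$, which is exactly a).

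For parts b) and c), set $c_i:=\E^i_{T'}(Y^2)-\E_{T'}(Y^2)$. By part (b) of Lemma~\ref{lem:basic-relations} applied to $T'$ we have $\al c_1+(1-\al)c_0=0$, so $(c_1,c_0)$ is again a scalar multiple of $(\pi_{01},-1)$ and hence $\sum_j p_{ij}c_j=\theta c_i$. Writing $\E^j_{T'}(Y^2)=\E_{T'}(Y^2)+c_j$ in the conditioning identity then yields
\[
\E^i_T(Y^2)=\sum_j p_{ij}\bigl(\E_{T'}(Y^2)+c_j\bigr)=\E_{T'}(Y^2)+\theta c_i=(1-\theta)\,\E_{T'}(Y^2)+\theta\,\E^i_{T'}(Y^2),
\]
which is b) for $i=1$ and c) for $i=0$. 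The argument is essentially bookkeeping; the only point needing care is the conditioning identity above — specifically, that $Y$ is a function of $A'$ alone and that the magnetization defining $Y$ is computed inside $T'$, so that Lemma~\ref{lem:basic-relations} may legitimately be invoked for the subtree $T'$. If one prefers to avoid the eigenvector remark, a)--c) follow equally well by direct substitution of the entries of $M$ together with the identities $\tfrac{1}{1-\al}=1-\theta$, $\tfrac{\al}{1-\al}=-\theta$ and $\pi_{01}\theta=-1$.
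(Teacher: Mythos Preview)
Your proof is correct and follows essentially the same route as the paper, which merely remarks that the lemma follows from the Markov property (conditioning on $\sigma_x$) together with the decomposition in Lemma~\ref{lem:basic-relations}. Your eigenvector reformulation is a tidy way to package the computation, but as you note yourself at the end, it is equivalent to the direct substitution the paper has in mind.
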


The proof follows from the first part of Lemma \ref{lem:basic-relations} and the
Markov property when we condition on $x$.

Next, we can write the effect on the magnetization of adding an edge
to the root and merging roots of two trees as follows.
Referring to Figure \ref{fig:merge-tree}, let $T'$ (resp. $T''$) be a finite tree
rooted at $y$ (resp. $z$) with the channel on all edges being given
$M$, leaf states $A$ (resp $A''$) and weighted magnetization at the
root $Y$ (resp. $Z$). Now
add an edge $(\hat y, z)$ to $T''$ to obtain a new tree $\hat T$. Then
merge $\hat T$ with $T'$ by
identifying $y = \hat y$ to obtain a new tree $T$. To avoid
ambiguities, denote by $x$ the root of $T$ and $X$ the
magnetization of the root of $T$. We
let $A = (A', A'')$ be the leaf state of $T$. Let $\hat Y$ be the
magnetization of the root of $\hat T$.\\

{\bf Note:} In the above construction, the vertex $y$ is a vertex ``at
the same level'' as $x$, and not a child of $x$ as it was in Lemma
\ref{lem:child-magnetization}. \\

\begin{lemma}\label{lem:add-edge}With the notation above, $\hat Y = \theta Z.$
\end{lemma}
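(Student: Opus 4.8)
The plan is to prove the identity \emph{pathwise}: for every fixed leaf configuration $A''$ of $T''$ I will express both $\hat Y$ and $Z$ as explicit functions of the single number $q := \p_{T''}(\sigma_z=1\mid\sigma(L)=A'')$ and compare. Since $\hat T$ is obtained from $T''$ by adjoining a new root $\hat y$ whose unique child is $z$, the leaves of $\hat T$ are exactly those of $T''$, and $\hat Y$ depends on $A''$ only through the posterior $\p_{\hat T}(\sigma_{\hat y}=1\mid A'')$, via the definition \eqref{eq:mag2}.

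The first step is the one‑child Bayes recursion for Markov models on trees — the same manipulation that underlies part~a) of Proposition~\ref{prop:expected-small}. Writing $\pi_1=\alpha$ and $\pi_0=1-\alpha$, Bayes' rule along the edge $(\hat y,z)$ together with the Markov property gives, for $i\in\{0,1\}$,
\[
\p_{\hat T}(\sigma_{\hat y}=i\mid A'')\;\propto\;\pi_i\sum_{j\in\{0,1\}}p_{ij}\,\frac{\p_{T''}(\sigma_z=j\mid A'')}{\pi_j},
\]
with a proportionality constant independent of $i$; reversibility of $\pi$ under $M$ shows the two right‑hand sides already sum to $1$. Because $p_{11}=0$, the $i=1$ line collapses to a single term and one obtains $\p_{\hat T}(\sigma_{\hat y}=1\mid A'')=\frac{\alpha(1-q)}{1-\alpha}$.

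The second step is pure substitution into \eqref{eq:mag2}:
\[
\hat Y=\frac{1}{\pi_{01}}\Bigl(\frac{1-q}{1-\alpha}-1\Bigr)=\frac{\alpha(\alpha-q)}{(1-\alpha)^2},\qquad Z=\frac{1}{\pi_{01}}\Bigl(\frac{q}{\alpha}-1\Bigr)=\frac{q-\alpha}{1-\alpha},
\]
and since $\pi_{01}=\frac{1-\alpha}{\alpha}$ and $\theta=-\frac{\alpha}{1-\alpha}$, these satisfy $\hat Y=\theta Z$ identically in $q$, which is the claim.

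I do not expect any genuine obstacle: the lemma is a one‑line computation once the right recursion is identified. The only care needed is (i) to use the one‑child posterior update above rather than the root‑conditioning identity of Lemma~\ref{lem:child-magnetization} — here $\hat y$ sits ``at the same level'' as the eventual root and has $z$ as its sole child — and (ii) to keep the Bayes bookkeeping straight. Conceptually, the statement is just the fact that adjoining one edge applies the stochastic matrix $M$ to the vector $r=(r_1,r_0)$ with $r_i:=\p_{T''}(\sigma_z=i\mid A'')/\pi_i-1$: this vector lies on the line $\{\alpha r_1+(1-\alpha)r_0=0\}$, which in two dimensions is precisely the $\theta$‑eigenspace of $M$, and the magnetization equals $r_1/\pi_{01}$; hence it is simply scaled by $\theta$. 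One could present the proof this way and skip the computation altogether.
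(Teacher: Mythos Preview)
Your proof is correct and is exactly the computation the paper has in mind: the paper's own proof is the single sentence ``The proof follows by applying Bayes rule, the Markov property and Lemma~\ref{lem:basic-relations},'' and your argument spells this out pathwise via the one-child posterior update. The eigenvector remark at the end is also correct and is a nice way to see why the factor must be $\theta$.
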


The proof follows by applying Bayes rule, the Markov property and
Lemma \ref{lem:basic-relations}. These facts also imply that

\begin{lemma}\label{lem:merge}
For any tree $\hat{T}$,
\[
X  =  \frac{Y +\hat Y + \Delta Y \hat Y }{1+\pi_{01}Y \hat Y}.
\]
\end{lemma}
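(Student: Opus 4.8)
The plan is to compute the posterior probability $\p_T(\sigma_x = 1 \mid \sigma(L) = A)$ directly from the structure of $T$ as the merge of $T'$ (rooted at $y$) and $\hat T$ (rooted at $\hat y$), identifying $y = \hat y = x$, and then to translate the resulting expression into the magnetization variable $X$ via the defining relation \eqref{eq:mag2}, namely $X = \pi_{01}^{-1}\big(\p_T(\sigma_x = 1\mid\sigma(L))/\alpha - 1\big)$. The key observation is that, conditioned on the spin $\sigma_x$, the leaf configurations $A'$ (of $T'$) and $A''$ (of $\hat T$) are independent, since $x$ separates the two subtrees. Hence by Bayes' rule,
\[
\p_T(\sigma_x = s \mid A) = \frac{\p(\sigma_x = s)\,\p_{T'}(A' \mid \sigma_x = s)\,\p_{\hat T}(A'' \mid \sigma_x = s)}{\sum_{t \in \{0,1\}} \p(\sigma_x = t)\,\p_{T'}(A' \mid \sigma_x = t)\,\p_{\hat T}(A'' \mid \sigma_x = t)}.
\]
Dividing numerator and denominator appropriately, the posterior at $x$ is governed by the two likelihood ratios $\p_{T'}(A'\mid\sigma_x=1)/\p_{T'}(A'\mid\sigma_x=0)$ and $\p_{\hat T}(A''\mid\sigma_x=1)/\p_{\hat T}(A''\mid\sigma_x=0)$.

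The next step is to re-express each likelihood ratio in terms of the corresponding magnetization. From \eqref{eq:mag2} applied to $T'$ with root $y$, one gets $\p_{T'}(\sigma_y = 1 \mid A') = \alpha(1 + \pi_{01} Y)$ and $\p_{T'}(\sigma_y = 0 \mid A') = 1 - \alpha(1 + \pi_{01} Y) = (1-\alpha)(1 - \tfrac{\alpha}{1-\alpha}\pi_{01} Y)$; using $\pi_{01} = (1-\alpha)/\alpha$ this is $(1-\alpha)(1 - Y)\cdot$(appropriate factor), so that the posterior ratio at $y$ in $T'$ equals $\frac{\alpha}{1-\alpha}\cdot\frac{1+\pi_{01}Y}{1-Y}$ up to the prior odds; dividing out the prior $\alpha/(1-\alpha)$ yields the likelihood ratio $\frac{1+\pi_{01}Y}{1-Y}$. (The same computation with $\hat Y$ in place of $Y$ handles $\hat T$.) One should be slightly careful with the exact normalization here — writing $\p_{T'}(\sigma_y=1\mid A')/\alpha = 1 + \pi_{01}Y$ and $\p_{T'}(\sigma_y=0\mid A')/(1-\alpha) = 1 - Y$ (the latter from the first line of \eqref{eq:mag2}), the likelihood ratio is exactly $\frac{(1-\alpha)(1+\pi_{01}Y)}{\alpha(1-Y)}$, and the prior-odds factor $\alpha/(1-\alpha)$ cancels when we recombine, leaving clean expressions. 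Substituting both ratios back into the Bayes formula and simplifying gives $\p_T(\sigma_x=1\mid A)/\alpha$ as an explicit rational function of $Y$ and $\hat Y$, which one then feeds into $X = \pi_{01}^{-1}(\p_T(\sigma_x=1\mid A)/\alpha - 1)$. After clearing denominators and using $\Delta = \pi_{01} - 1$ together with the relation between $\theta, \alpha$ and $\pi_{01}$, the algebra should collapse to $X = \dfrac{Y + \hat Y + \Delta Y \hat Y}{1 + \pi_{01} Y \hat Y}$.

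The main obstacle is purely bookkeeping: keeping the normalizations of the conditional-to-posterior conversions consistent, since $X$, $Y$, $\hat Y$ are \emph{weighted} magnetizations and the weighting factor $(1-\alpha)^{-1}$ in \eqref{eq:mag2} must be tracked through the Bayes manipulation, and one must verify that the combination of $\alpha$, $\pi_{01}$, $\Delta$, $\theta$ arising from the hardcore transition matrix $M$ produces exactly the stated coefficients. Once the two likelihood ratios are written in the normalized form $\frac{1+\pi_{01}Y}{1-Y}$ and $\frac{1+\pi_{01}\hat Y}{1-\hat Y}$, this is a finite rational-function identity that can be checked by direct expansion; the lemma then follows, and together with Lemma~\ref{lem:add-edge} ($\hat Y = \theta Z$) it yields the recursion for $\overline X$ needed to complete the proof of Theorem~\ref{thm:tree-non-reconst}. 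I note that this is precisely the step where the analysis parallels \cite{BorChaMosRoc06}, so one can cross-check the final form against their analogous identity for the Ising model with external field.
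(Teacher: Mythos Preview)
Your proposal is correct and follows exactly the route the paper indicates: the paper's own proof is the one-line remark that the identity ``follows by applying Bayes rule, the Markov property and Lemma~\ref{lem:basic-relations},'' and your computation---writing the posterior at $x$ as a product of the two likelihood ratios $\frac{1+\pi_{01}Y}{1-Y}$ and $\frac{1+\pi_{01}\hat Y}{1-\hat Y}$ via conditional independence, then converting back through \eqref{eq:mag2} and simplifying with $\alpha\pi_{01}=1-\alpha$ and $\Delta=\pi_{01}-1$---is precisely that argument spelled out in full.
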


With these lemmas in hand we can use derive
a recursive upper bound on the second moments. We will use the
expansion

\[
\frac{1}{1+r} = 1-r + r^2 \frac{1}{1+r}.
\]

Taking $r = \pi_{01}Y \hat Y$, by Lemma \ref{lem:merge} we have
\begin{eqnarray}\label{eq:rec-expansion}
\nonumber X & = & (Y +\hat Y + \Delta Y \hat Y ) \left( 1-\pi_{01}Y \hat Y +
  (\pi_{01}Y \hat Y )^2 \frac{1}{1+\pi_{01}Y \hat Y} \right)\\
\nonumber & = & Y +\hat Y  + \Delta Y \hat Y - \pi_{01}Y \hat Y \left( Y +\hat Y
  + \Delta Y \hat Y \right) + (\pi_{01})^2(Y \hat Y)^2 X\\
& \le & Y +\hat Y  + \Delta Y \hat Y - \pi_{01}Y \hat Y \left( Y +\hat Y
  + \Delta Y \hat Y \right) + (\pi_{01})^2(Y \hat Y)^2
\end{eqnarray}
where the last inequality follows since $X \leq 1$ with probability 1.

Let $\rho' = \overline Y_1 / \overline Y $ and $\rho'' =
\overline Z_1 / \overline Z$. Below, the moments $\overline Y$
etc. are defined according to the appropriate measures over the tree
rooted at $y$ (i.e. $T'$) etc.

Applying Lemmas \ref{lem:basic-relations}, \ref{lem:child-magnetization} and
\ref{lem:add-edge}, we have the following relations.

\begin{eqnarray}
\E^1_T(X) = \pi_{01} \overline X, \ \ \ \E^1_T(Y) = \pi_{01} \overline
y, \ \ \ \E^1_T(Y^2) = \overline Y \rho' \nonumber \\
\label{eq:conditional-second-moment} \E^1_T(\hat Y) = \pi_{01}\theta^2
\overline Z, \ \ \ \E^1_T(\hat Y^2)
= \theta^2 \overline Z ((1-\theta) + \theta \rho'').
\end{eqnarray}


Applying $(\pi_{01})^{-1}E_T^1(\cdot)$ to both sides of
\eqref{eq:rec-expansion}, we obtain the following.
\begin{eqnarray}
\nonumber \overline X & \leq & \overline Y + \theta^2 \overline Z +
\Delta\pi_{01}\overline Y \overline Z
-\pi_{01}\theta^2 \overline Y \overline Z \rho'
-\pi_{01}\theta^2 \overline Y \overline Z ((1-\theta) + \theta \rho'')\\
&& -\Delta\theta^2 \overline Y \overline Z \rho' ((1-\theta) + \theta \rho'')
+ \pi_{01}\theta^2 \overline Y \overline Z \rho' ((1-\theta) + \theta
\rho'')\nonumber \\
& = & \overline Y + \theta^2 \overline Z -\pi_{01} \theta^2 \overline Y \overline Z
(\mathcal A - \Delta \mathcal B) \nonumber
\end{eqnarray}
where
\begin{eqnarray}
\mathcal A & = & \rho' + (1-\rho')((1-\theta) + \theta\rho''),
\nonumber \\
\mathrm{and} \ \
\mathcal B & = & 1- (\pi_{01})^{-1}\rho'((1-\theta) + \theta\rho'') = 1-
\frac{\al}{1-2\al} \rho'((1-\theta) + \theta\rho'') . \nonumber
\end{eqnarray}

If $\mathcal A - \Delta \mathcal B \geq 0$, this would already give a
sufficiently good recursion to show that $\overline X(n)$ goes to $0$,
so we will assume is negative and try to get a good (negative) lower bound.
First note that by their definition $\rho',\rho'' \geq 0$. Further
since $\overline Y = \alpha \overline Y_1 + (1-\alpha) \overline Y_0$,  \[
\rho' \leq \frac 1\al.\] Similarly, \[ \rho''
\leq  \frac 1\al.\]

Since $\E_T^1(\hat Y^2)$ and $\overline Z \geq 0$, it follows from
\eqref{eq:conditional-second-moment} that $(1-\theta) + \theta\rho'' \geq
0$. Together with the fact that $\rho' \geq 0$, this implies that
$\mathcal B \leq 1$.

Since $\mathcal A$ is multi-linear in $(\rho',\rho'')$, to minimize it,
its sufficient to consider the extreme cases. When $\rho' = 0$,
$\mathcal A$ is minimized at the upper bound of $\rho''$ and hence
\[
\mathcal A \geq 1-\pi_{01}\frac{\al}{1-\al} = 0.
\]

When $\rho' = \frac{1}{\alpha}$,
\[
\mathcal A = \frac{1}{\alpha} + \left(1-\frac{1}{\alpha}\right)[1-\theta(1-\rho'')] \geq 0.
\]

Hence, we have
\[
\overline X \leq \overline Y + \theta^2 \overline Z + \frac{1-2\al}{1-\al}\overline Y \overline Z.
\]

Applying this recursively to the tree, we obtain the following
recursion for the moments.

\[
\overline X \leq \frac{1-\al}{1-2\al}\theta^2 \left(\left(1+Z \frac{1-2\al}{1-\al}\right)^k-1 \right)
\]

We bound the $(1+x)^k-1$ term as,
\[
|(1+x)^k -1| \leq e^{|x|k}-1 =\int_0^{|x|k} e^s \ ds \leq e^{|x|k}k|x|
\]

and this implies the following recursion.

\begin{proposition}\label{prop:recursion}
If for some $n$, $\overline X(n) \leq \frac{\al}{2}$, we have that
\[
\overline X(n+1) \leq \theta^2 \left(\frac{1-\al}{1-2\al} \right)^2 e^{\frac{1}{2}\al d}d \overline X(n).
\]
\end{proposition}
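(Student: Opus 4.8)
The plan is to combine the single-merge inequality
\[
\overline X \leq \overline Y + \theta^2 \overline Z + \frac{1-2\al}{1-\al}\,\overline Y\,\overline Z
\]
derived above with an iteration over the $d$ children of the root, and then to feed the outcome into the elementary estimate $|(1+x)^k-1|\le e^{|x|k}k|x|$. Throughout, $\overline X = \overline X(n+1)$ refers to the tree $T(n+1)$, whose root has $d$ children, each heading a copy of $T(n)$; by Proposition~\ref{p:bar-x-non-reconst} and the definitions, the root magnetization of each such subtree has second moment $\overline X(n)$, so in the merge bound we may take $\overline Z = \overline X(n)$.

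First I would set up the iteration. Build $T(n+1)$ by attaching the $d$ child-subtrees to the root one at a time, as in Figure~\ref{fig:merge-tree}, and let $W_j$ denote the second moment of the magnetization of the partial tree after $j$ of the subtrees have been attached, so that $W_0 = 0$ and $W_d = \overline X(n+1)$. Applying Lemmas~\ref{lem:basic-relations}, \ref{lem:child-magnetization}, \ref{lem:add-edge} and~\ref{lem:merge} together with the displayed single-merge bound, instantiated with $\overline Y = W_j$ and $\overline Z = \overline X(n)$, gives
\[
W_{j+1}\ \le\ W_j + \theta^2\,\overline X(n) + \frac{1-2\al}{1-\al}\,W_j\,\overline X(n),\qquad 0\le j\le d-1.
\]
This is a first-order linear recursion $W_{j+1}\le (1+c z)\,W_j + \theta^2 z$ with $z=\overline X(n)$ and $c=\frac{1-2\al}{1-\al}\in(0,1)$; solving it with $W_0=0$ yields $W_d\le \frac{\theta^2}{c}\big((1+cz)^d-1\big)$, that is,
\[
\overline X(n+1)\ \le\ \frac{1-\al}{1-2\al}\,\theta^2\left(\Bigl(1+\tfrac{1-2\al}{1-\al}\,\overline X(n)\Bigr)^{d}-1\right).
\]

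Next I would bound the bracketed term using the hypothesis. Taking $x=\tfrac{1-2\al}{1-\al}\,\overline X(n)\ge 0$ and $k=d$ in $|(1+x)^k-1|\le e^{|x|k}k|x|$ gives $\big(1+\tfrac{1-2\al}{1-\al}\overline X(n)\big)^d-1\le e^{xd}\,d\,\tfrac{1-2\al}{1-\al}\,\overline X(n)$. Since $\overline X(n)\le \al/2$ by assumption and $\tfrac{1-2\al}{1-\al}<1$, the exponent satisfies $xd\le \tfrac{\al}{2}d$; substituting and cancelling one factor $\tfrac{1-2\al}{1-\al}$ against $\tfrac{1-\al}{1-2\al}$ leaves $\overline X(n+1)\le \theta^2\, d\, e^{\al d/2}\,\overline X(n)$, which is the claimed bound once one absorbs the harmless factor $\big(\tfrac{1-\al}{1-2\al}\big)^2\ge 1$ (retained in the statement so that the recursion is stated in a form convenient for the subsequent induction).

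The substantive work is all in the lemmas and the $\mathcal A$–$\mathcal B$ estimates preceding this proposition, so the one genuine point requiring care here is the iteration step: one must check that the single-merge bound composes, i.e.\ that it is legitimate to instantiate $\overline Y$ by the second moment $W_j$ of an arbitrary partial tree — which is precisely why Lemma~\ref{lem:merge} is stated for an arbitrary $\hat T$ — and that each of the $d$ child-subtrees genuinely contributes the same value $\overline Z=\overline X(n)$. Once that is in place, what remains is the routine algebra of solving a first-order linear recursion and invoking the stated exponential inequality.
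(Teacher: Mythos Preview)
Your proposal is correct and follows essentially the same route as the paper: the paper's ``proof'' is the sentence ``Applying this recursively to the tree'' together with the elementary bound $|(1+x)^k-1|\le e^{|x|k}k|x|$, and you have simply spelled out that recursive application via the $W_j$ iteration, solved the resulting linear recursion, and landed on the same closed form $\overline X(n+1)\le \tfrac{1-\al}{1-2\al}\theta^2\big((1+\tfrac{1-2\al}{1-\al}\overline X(n))^d-1\big)$. One minor remark: your final inequality $\overline X(n+1)\le \theta^2 d\,e^{\al d/2}\,\overline X(n)$ is in fact sharper than the stated one, and the extra factor $\big(\tfrac{1-\al}{1-2\al}\big)^2$ in the proposition is there so that $\theta^2\big(\tfrac{1-\al}{1-2\al}\big)^2$ collapses to $\big(\tfrac{\al}{1-2\al}\big)^2$, which is the form used immediately afterwards.
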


Thus if $ \left(\frac{\al}{1-2\al} \right)^2 e^{\frac{1}{2}\al d}d < 1$ then it follows from the recursion that
\begin{equation}\label{e:recursionLimit}
\lim_n \overline X(n) = 0.
\end{equation}
When $\al = \frac1d \big(\ln d + \ln \ln d - \ln \ln \ln d
  -\beta\big)$ and $\beta> \ln 2 - \ln \ln 2$, by Lemma
\ref{lem:finite-levels}, for $d$ large enough, $\overline X(3) \leq
\frac{\al}{2}$. Hence by equation \eqref{e:recursionLimit} we have that
$\overline X(n) \to 0$ and so by Proposition~\ref{p:bar-x-non-reconst}
we have non-reconstruction. Since reconstruction is monotone in
$\lambda$ and hence in $\al$ it follows that we have
non-reconstruction for $\al \leq \al_R$ for large enough $d$.  This
completes the proof of Theorem~\ref{thm:tree-non-reconst}.

\section{Partition function of the hardcore model for random $d$-regular graph}\label{sec:partition-fn-G}
In this section, we derive expressions for the first and second moments of the hardcore partition function for the $d$-regular random graph. The  calculations are along the lines of those in \cite{MosWeiWor09} and we adopt their notation here. We will work with the configuration model for random graphs, described below, in order to simplify the calculations.

\subsection{Configuration model}
Let $\mathcal H(n,d)$ denote the set of all $d$-regular (multi)graphs on $n$ vertices and $\mathcal G(n,d)$ the subset of $d$-regular simple graphs.  The analysis of the properties of a random graph in $\mathcal G(n,d)$ can often be simplified by making use of the {\em configuration model}, introduced by Boll\'{o}bas \cite{Bol80}. Fix $d$ and $n$ such that $dn$ is even. Define a $d$-regular multigraph on $n$ vertices via the configuration model as follows. Begin by replacing each vertex with $d$ distinct copies and then generate a uniformly random pairing of the $dn$ distinct points. Finally, collapse the $d$ copies corresponding to each vertex back into one vertex, obtaining a uniformly random multigraph in $\mathcal H(n,d)$. Let $S$ be the event that the multigraph obtained is simple. Clearly, on the event $S$, the graph obtained is uniformly distributed over $\mathcal G(n,d)$. Moreover, for fixed $d$,
\begin{equation}\label{eq:prob-simple-config}
\P(S) = (1+o(1)) \exp \left( \frac{1-d^2}{4}\right)\,,
\end{equation}
where the $o(1)$ term tends to $0$ as $n \to \infty$. Since the probability in \eqref{eq:prob-simple-config} is uniformly bounded away from $0$, any event that holds asymptotically with high probability for $\mathcal H(n,d)$ also holds asymptotically with high probability when we condition on $S$, i.e. for $\mathcal G(n,d)$. In what follows, by ``$d$-regular random graph'', we will mean the multigraph generated by the configuration model, unless mentioned otherwise.

One useful property of the configuration model that we will make use of repeatedly is that the pairings of the $dn$ distinct points may be revealed sequentially. That is, given a vertex $v$, we may reveal the pairings of its $d$ copies one by one so that the distribution of pairings over the remaining unmatched points remains uniform.

{\bf Notation:} In the sequel, we will use $f(n) = \widetilde \Theta(g(n))$ to mean equality of the functions up to polynomial factors in $n$. We will assume throughout that quantities of the form $an, \alpha n,\gamma n, \varepsilon n$ are integers. We use ``with high probability" to mean with probability going to 1 as $n \to \infty$. In what follows, we will use $\sigma_u$ to denote the restriction of an independent set $\sigma$ of the graph to a vertex $u$. The restriction of $\sigma$ to a subset of vertices $S$ will be denoted by $\sigma(S)$.

\subsection{The first moment of the partition function}
In this section, we calculate the first moment of the partition function for the hardcore model on the $d$-regular random graph. For an independent set $\sigma \in I(G)$, let $|\sigma|$ denote the number of vertices in $I$. For fugacity $\lambda$, the partition function is given by 
\[
Z_G=Z_G(\lambda)=\displaystyle\sum_{\sigma \in I(G)} \lambda^{|\sigma|}.
\]
Let $0 \le \alpha \leq 1/2$ and let $Z_{G,\alpha}=Z_{G,\alpha}(\lambda)$
be the contribution to the partition function from independent
sets of size $\alpha n$, i.e.
\[
Z_{G,\alpha} := \sum_{\sigma \in I(G):|\sigma| = \alpha n}\lambda^{\alpha n}, \qquad Z_G = \sum_{\alpha} Z_{G,\alpha}.
\]

The following approximation will be useful in simplifying the probabilities obtained in the sequel. Let $a>0$ be a constant. Then, by Stirling's approximation,

\begin{align}\label{lem:product-f-comparison}
\displaystyle\prod_{j=1}^{an} j =  \exp \left( n \int_{0}^{a} \ln(x) dx +a n \ln(n)+ O(\ln n) \right).
\end{align}

Let
\[
H(x) = -x\ln(x)-(1-x)\ln(1-x).
\]
\begin{lemma}\label{prop:Z-alpha-first-moment}
Let $G \sim \mathcal H(n,d)$. Fix $\lambda>0$ and $0 \le \alpha \le \frac 12$. The first moment of $Z_{G,\alpha}$ is given by
\begin{align}\label{eq:first-moment-product}
\E\left(Z_{G,\alpha}\right) = {n \choose \alpha n} \lambda^{\alpha n}
\displaystyle\prod_{i=0}^{\alpha nd-1} \frac{(1-\alpha)nd-i}{nd-1
  -2i}  = \widetilde \Theta(1) \exp\left(n \Phi(\alpha)\right)
\end{align}
where
\begin{align}
\Phi(\alpha) = \Phi(\alpha,\lambda) =H(\alpha) + \alpha \ln(\lambda) + d \left(
(1-\alpha)\ln(1-\alpha) - \left(\frac{1-2\alpha}{2}\right)\ln(1-2\alpha) \right).
\label{eq:Phi1}
\end{align}
\end{lemma}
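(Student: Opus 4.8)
\emph{Proof strategy.} The plan is to evaluate $\E(Z_{G,\al})$ by linearity of expectation over all candidate vertex sets of size $\al n$, use the symmetry of the configuration model to reduce to the probability that one fixed such set is independent, compute that probability exactly by revealing the random pairing one half-edge at a time, and finally pass to asymptotics via Stirling. For the reduction, write $Z_{G,\al}=\lambda^{\al n}\sum_{S:\,|S|=\al n}\mathbbm{1}\{S\in I(G)\}$; since every $\al n$-subset of vertices is symmetric under the configuration model, $\E(Z_{G,\al})=\lambda^{\al n}\binom{n}{\al n}\,\P(S_0\in I(G))$ for any fixed $S_0$ with $|S_0|=\al n$. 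The set $S_0$ is independent in the multigraph precisely when no edge of the pairing joins two of the $\al nd$ half-edges attached to $S_0$, so it remains to compute the probability that every $S_0$-half-edge is matched outside $S_0$.

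To obtain this probability exactly I would invoke the sequential-revelation property of the configuration model recorded above, exposing the partners of the $\al nd$ half-edges incident to $S_0$ one by one. On the event that the first $i$ of them have each been matched outside $S_0$, there remain $nd-2i$ unmatched points, of which $(1-\al)nd-i$ lie outside $S_0$, so the next $S_0$-half-edge lands outside $S_0$ with conditional probability $\frac{(1-\al)nd-i}{nd-1-2i}$ (it has $nd-1-2i$ available partners). Multiplying over $i=0,\dots,\al nd-1$ gives $\P(S_0\in I(G))=\prod_{i=0}^{\al nd-1}\frac{(1-\al)nd-i}{nd-1-2i}$, which is the first equality in \eqref{eq:first-moment-product}. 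Here the natural multigraph convention — that $S_0$ is independent iff the pairing has no $S_0$–$S_0$ edge at all — is what makes the count telescope cleanly, and it is immaterial for the stated bounds since $\P(S)$ is bounded away from $0$.

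For the asymptotics, take logarithms. Stirling gives $\ln\binom{n}{\al n}=nH(\al)+O(\log n)$, and $\ln\lambda^{\al n}=\al n\ln\lambda$ exactly. For the product, consider $\sum_{i=0}^{\al nd-1}\bigl(\ln((1-\al)nd-i)-\ln(nd-1-2i)\bigr)$: factoring $\ln(nd)$ out of each summand, where it cancels between the two terms, identifies the remaining sum, up to an $O(\log n)$ error, with the Riemann sum $nd\int_0^{\al}\bigl(\ln(1-\al-t)-\ln(1-2t)\bigr)\,dt$, whose integrand is bounded on $[0,\al]$ for $\al\le\tfrac12$. Evaluating the integral yields $(1-\al)\ln(1-\al)-\tfrac{1-2\al}{2}\ln(1-2\al)$, so $\tfrac1n\ln\prod = d\bigl((1-\al)\ln(1-\al)-\tfrac{1-2\al}{2}\ln(1-2\al)\bigr)+O(\tfrac{\log n}{n})$. (Equivalently one may write the product as a ratio of factorials — applying \eqref{lem:product-f-comparison} after noting that the denominator, a block of a double factorial with $nd$ even, is a ratio of ordinary factorials times a power of $2$ — and check directly that the $n\ln n$, the linear-in-$n$, and the $\ln 2$ contributions all cancel.) Summing the three pieces gives $\tfrac1n\ln\E(Z_{G,\al})=\Phi(\al)+O(\tfrac{\log n}{n})$, i.e. $\E(Z_{G,\al})=\widetilde\Theta(1)\exp(n\Phi(\al))$ with $\Phi$ as in \eqref{eq:Phi1}.

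The only step with genuine content is the exact evaluation of $\P(S_0\in I(G))$: one must set up the sequential exposure carefully and fix the multigraph convention so that the product comes out in the claimed telescoped form. The passage to asymptotics is a routine Stirling computation; its one mild trap is the parity of $nd-1$ in the denominator product and the need to confirm that the divergent $n\ln n$ terms genuinely cancel — which is precisely why I would run that step through the Riemann-sum route, which sidesteps the bookkeeping.
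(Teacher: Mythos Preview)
Your proposal is correct and follows essentially the same approach as the paper: the paper's proof is a two-sentence sketch that attributes the first equality to computing the probability in the configuration model that a fixed $\alpha n$-set is independent, and the second equality to the Stirling-type approximation \eqref{lem:product-f-comparison}. Your write-up simply fills in these details carefully --- the sequential-revelation derivation of the product and the Riemann-sum/Stirling asymptotics --- and the integral you evaluate indeed yields $(1-\alpha)\ln(1-\alpha)-\tfrac{1-2\alpha}{2}\ln(1-2\alpha)$ as required.
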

\begin{proof}
The first equality follows by calculating the probability in the configuration model that a given subset of $\alpha n$ vertices is an independent set, i.e. that the vertices in the subset are not matched to vertices in the subset itself.
The second equality follows by \eqref{lem:product-f-comparison}.
\end{proof}
For $\lambda>0$, it can be verified that the maximum of $\Phi$ is achieved at  $\alpha^* = \alpha^*(\lambda,d)$, which is the solution to the equation
\begin{align} \label{eq:alpha-star-eqn}
\lambda
\frac{1-\alpha}{\alpha}\left(\frac{1-2\alpha}{1-\alpha}\right)^{d}
= 1\,
\end{align}
which is obtained by differentiating $\Phi$. To solve, if we were to set $\alpha=x/d$, this would reduce roughly to
solving $xe^x=\lambda d$ and thus we obtain that
\[
\alpha^*(\lambda,d)= (1+o(1))\frac{\ln(\lambda d)}{d}.
\]
Notice that the relation \eqref{eq:alpha-star-eqn} between $\alpha, \lambda$ and $d$ is equivalent to~\eqref{e:alphaLambdaRelation}.

\subsection{The second moment of the partition function}\label{sec:second-moment-overview}
To estimate the second moment $\E((Z_{G,\alpha})^2)$, we consider the contributions from pairs of independent sets $S,T$ each of size $\alpha n$. We divide this according to the size of the
overlap $|S \cap T|=\gamma n$ and according to the number $\varepsilon n$ of edges of the graph which go from each
of $S,T$ to the complement $V \setminus (S \cup T)$. Call this
contribution $Z^{(2)}_{G,\alpha,\gamma,\varepsilon}$. That is, for $(\alpha,\gamma,\varepsilon)$ in the region
\begin{align}
\mathcal R = \left\{ (\alpha,\gamma,\varepsilon): 0 \le \alpha,\gamma,\varepsilon \le \frac 12, \ \ \alpha - \gamma -\varepsilon \ge 0,
\ \ 1-2\alpha-2\varepsilon \ge 0\right\}, \label{eq:feasible-region}
\end{align}
we define
\begin{align*}
& Z^{(2)}_{G,\alpha,\gamma,\varepsilon}  \defeq \\ 
& \lambda^{2 \alpha n}  \Big | \Big \{S,T \in I(G): |S|=|T|=\alpha n, |S \cap T|=\gamma n, |E_G(S,V \setminus (S \cup T))| = |E_G(T,V \setminus (S \cup T))| =\varepsilon n  \Big \} \Big |
\end{align*}
and
\[
\E((Z_{G,\alpha})^2) = \displaystyle\sum_{\gamma,\varepsilon}\E\left( Z^{(2)}_{G,\alpha,\gamma,\varepsilon} \right).
\]
Calculating the probability in the configuration model that a pair of subsets of vertices $S$ and $T$ as above are both independent sets, we have that for $G \sim \mathcal H(n,d)$,
\begin{align}\label{eq:second-moment-product}
\E\left(Z^{(2)}_{G,\alpha,\gamma, \varepsilon}\right) = \lambda^{2 \alpha n}{n\choose
  \alpha n} {\alpha n \choose \gamma n}
{(1-\alpha)n \choose (\alpha-\gamma)n}
 \left(
\frac{\displaystyle\prod_{i=0}^{\gamma n d-1}
  (1-2\alpha+\gamma)dn - i}{\displaystyle\prod_{i=0}^{\gamma n d -1} dn -1 - 2i}\right) \times
\nonumber \\
\times \left(
\frac{\displaystyle\prod_{i=0}^{\varepsilon d n-1}  ((1-2\alpha)dn - i)
  \cdot \displaystyle\prod_{i=0}^{(\alpha-\gamma-\varepsilon)dn -1}
  (\alpha-\gamma)dn - i}{\displaystyle\prod_{i=0}^{(\alpha-\gamma)d n-1}(
  (1-2\gamma)dn -1 -2i)}
\cdot  \frac{\displaystyle\prod_{i=0}^{\varepsilon d n-1}
  (1-2\alpha-\varepsilon)dn - i
}{\displaystyle\prod_{i=0}^{\varepsilon d n-1} (1-2\alpha)dn -1 - 2i}
\right).
\end{align}

The following function arises in the approximation of the expression in \eqref{eq:second-moment-product}
\[
f(\alpha,\gamma,\varepsilon) = 2\alpha
\ln(\lambda)+H(\alpha)+H_1(\gamma,\alpha)+ H_1(\alpha-\gamma,1-\alpha)
+d\Psi_2(\alpha,\gamma,\varepsilon)
\]
where
\[
H_1(x,y) = -x(\ln(x)-\ln(y))+(x-y)(\ln(y-x)-\ln(y))
\]
and
\begin{align*}
& \Psi_2(\alpha,\gamma,\varepsilon) = H_1(\varepsilon,\alpha-\gamma)+
\int_0^\gamma
\ln(1-2\alpha+\gamma-x)d{x} - \int_0^\gamma \ln(1-2x)d{x} \nonumber \\
& + \int_0^{\varepsilon}
\ln(1-2\alpha-x)d{x} +
\int_0^{\alpha-\gamma-\varepsilon}\ln(\alpha-\gamma-x)d{x} -
\int_0^{\alpha-\gamma} \ln(1-2\gamma-2x)d{x} \nonumber \\
& +  \int_0^\varepsilon
\ln(1-2\alpha-\varepsilon-x)d{x} - \int_0^\varepsilon
\ln(1-2\alpha-2x)d{x}. \label{eq:Psi2-integral}
\end{align*}

In particular, in Section \ref{sec:tech-lemmas} we will show that the logarithm of $\E(Z^{(2)}_{G,\alpha,\gamma,\varepsilon})$ scaled by $n$ is well approximated by $f$, and for $\lambda < \lambda_c$, $f$ decays quadratically around its maximum.

\begin{proposition}\label{prop:f-approximates-Z-square}
Let $G \sim \mathcal H(n,d)$ and $0 \le \alpha \le \frac 12$. Then,
\begin{align*}
\E\left(Z^{(2)}_{G,\alpha,\gamma,\varepsilon}\right)= \exp\left(nf(\alpha,\gamma,\varepsilon)+ O(\ln(n))\right).
\end{align*}
\end{proposition}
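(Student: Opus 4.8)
The plan is to start from the exact expression for $\E(Z^{(2)}_{G,\alpha,\gamma,\varepsilon})$ given in \eqref{eq:second-moment-product} and show that, after taking logarithms and dividing by $n$, every factor converges to the corresponding term in $f(\alpha,\gamma,\varepsilon)$, with all error terms absorbed into $O(\ln n)$. The right-hand side of \eqref{eq:second-moment-product} is a product of three types of objects: the fugacity factor $\lambda^{2\alpha n}$, binomial coefficients (namely $\binom{n}{\alpha n}$, $\binom{\alpha n}{\gamma n}$, $\binom{(1-\alpha)n}{(\alpha-\gamma)n}$), and ratios of products of the form $\prod_{i=0}^{cn-1}(an - i)$ or $\prod_{i=0}^{cn-1}(an - 1 - 2i)$. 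The fugacity factor contributes exactly $2\alpha n\ln\lambda$. For the binomials, Stirling's approximation gives $\binom{n}{\alpha n} = \widetilde\Theta(1)\exp(nH(\alpha))$, and similarly $\binom{\alpha n}{\gamma n}$ and $\binom{(1-\alpha)n}{(\alpha-\gamma)n}$ contribute $nH_1(\gamma,\alpha)$ and $nH_1(\alpha-\gamma,1-\alpha)$ respectively once one checks that $\ln\binom{yn}{xn} = n\big(y H(x/y)\big) + O(\ln n)$ matches the stated definition of $H_1$ (this is a routine identity: $yH(x/y) = -x(\ln x - \ln y) - (y-x)(\ln(y-x)-\ln y)$, which is exactly $H_1(x,y)$).

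The main work is handling the product ratios. Here I would invoke the approximation \eqref{lem:product-f-comparison}, which states $\prod_{j=1}^{an} j = \exp\big(n\int_0^a \ln x\,dx + an\ln n + O(\ln n)\big)$. Each factor $\prod_{i=0}^{cn-1}(an-i)$ can be rewritten as $\prod_{j=an-cn+1}^{an} j = \big(\prod_{j=1}^{an}j\big)\big/\big(\prod_{j=1}^{(a-c)n}j\big)$, so by \eqref{lem:product-f-comparison} its logarithm is $n\int_0^a \ln x\,dx - n\int_0^{a-c}\ln x\,dx + an\ln n - (a-c)n\ln n + O(\ln n) = n\int_{a-c}^a \ln x\,dx + cn\ln n + O(\ln n) = n\int_0^c \ln(a-x)\,dx + cn\ln n + O(\ln n)$, using the substitution $x \mapsto a-x$. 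The factors in the denominators of the form $\prod_{i=0}^{cn-1}(an - 1 - 2i)$ are handled the same way up to the substitution accounting for the step $2$: $\prod_{i=0}^{cn-1}(an-1-2i) = 2^{cn}\prod_{i=0}^{cn-1}\big(\tfrac{an-1}{2}-i\big)$, and then the product of the linear-in-$i$ terms is again a ratio of factorials, giving logarithm $cn\ln 2 + n\int_0^c \ln(a/2 - x)\,dx + O(\ln n) = n\int_0^c \ln(a - 2x)\,dx + O(\ln n)$ (the $\ln 2$ and the $\ln n$ pieces recombine cleanly). Carrying this out for each of the seven product factors in \eqref{eq:second-moment-product} and matching against the seven integral terms in the definition of $\Psi_2$, together with the $H_1(\varepsilon,\alpha-\gamma)$ term coming from the ratio $\prod(\alpha-\gamma-x)dn\big/\prod\ldots$ interpreted as a binomial-type contribution, yields $d\,\Psi_2(\alpha,\gamma,\varepsilon)$. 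The crucial bookkeeping point is that all the $n\ln n$ contributions from numerators and denominators cancel exactly — this is forced because the total number of points being matched on each side of every ratio is equal, which is precisely the combinatorial content of the probabilities in the configuration model being genuine probabilities; I would verify this cancellation explicitly as it is the one place an error would be fatal.

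The step I expect to be most delicate is not any single estimate but the careful matching of the seven product-ratio factors in \eqref{eq:second-moment-product} to the seven integrals in $\Psi_2$, making sure the arguments of the logarithms (e.g. $1-2\alpha+\gamma-x$ versus $1-2x$ versus $1-2\gamma-2x$, and the step-$2$ versus step-$1$ products) line up after the change of variables, and confirming that the leftover $cn\ln n$ terms telescope. Once the logarithm of \eqref{eq:second-moment-product} is shown to equal $n f(\alpha,\gamma,\varepsilon) + O(\ln n)$ term by term, the proposition follows immediately. I would also remark that the $O(\ln n)$ bound is uniform over $(\alpha,\gamma,\varepsilon)$ in any compact subset of the interior of $\mathcal R$ bounded away from the boundary, and that on the boundary of $\mathcal R$ some arguments of the logarithms vanish, so there the estimate should be interpreted as an upper bound (which is all that is needed for the second-moment argument); this mirrors the treatment in \cite{MosWeiWor09}.
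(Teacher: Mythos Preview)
Your proposal is correct and follows essentially the same approach as the paper. The paper's own proof is a single sentence in Section~\ref{sec:tech-lemmas}: ``Using \eqref{lem:product-f-comparison} to compare terms in \eqref{eq:Z2-product} and \eqref{eq:Psi2} proves Proposition~\ref{prop:f-approximates-Z-square}.'' Your write-up simply unpacks that sentence --- Stirling for the binomials, the factorial approximation \eqref{lem:product-f-comparison} for each product, the substitution converting $\prod_{i=0}^{cn-1}(an-i)$ into $n\int_0^c\ln(a-x)\,dx + cn\ln n$, and the bookkeeping that the $n\ln n$ pieces cancel --- so there is nothing substantively different.
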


For any $\alpha$, define $\hat \gamma = \hat \gamma(\alpha) \defeq \alpha^2$ and $\hat \varepsilon = \hat \varepsilon(\alpha) \defeq \alpha(1-2\alpha)$. We will also use the shorthand $\gamma^*\defeq\hat \gamma(\alpha^*)$ and $\varepsilon^* \defeq \hat \varepsilon(\alpha^*)$.

\begin{proposition}\label{prop:f-global-max}
Let $\lambda < \lambda_c$, and $(\alpha,\gamma,\varepsilon) \in \mathcal R$. Then, the function $f(\alpha,\gamma,\varepsilon)$ attains its maximum at $(\alpha^*,\gamma^*,\varepsilon^*)$ and is strictly concave at this point. In particular for some $C=C(d,\lambda)$,
\begin{align*}
f(\alpha^*,\gamma^*,\varepsilon^*) - f(\alpha,\gamma,\varepsilon)  \ge  C (|\alpha-\alpha^*|^2 + |\gamma-\gamma^*|^2+ |\varepsilon-\varepsilon^*|^2).
\end{align*}
\end{proposition}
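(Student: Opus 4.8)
The plan is to analyze $f(\alpha,\gamma,\varepsilon)$ by first fixing $\alpha$ and optimizing over $(\gamma,\varepsilon)$, then optimizing the resulting function over $\alpha$. A crucial structural observation is that $f$ decomposes: the terms $2\alpha\ln\lambda + H(\alpha) + d\big((1-\alpha)\ln(1-\alpha) - \tfrac{1-2\alpha}{2}\ln(1-2\alpha)\big)$ are precisely $\Phi(\alpha)$ from Lemma~\ref{prop:Z-alpha-first-moment} up to rearrangement, so one expects $f(\alpha,\gamma,\varepsilon) \le 2\Phi(\alpha)$ with equality exactly at $(\gamma,\varepsilon) = (\hat\gamma(\alpha),\hat\varepsilon(\alpha))$. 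The heuristic is that $\hat\gamma = \alpha^2$ and $\hat\varepsilon = \alpha(1-2\alpha)$ are the ``independent'' or product-measure values: a pair of independent sets chosen independently, each of density $\alpha$, overlaps in density $\alpha^2$ and has the expected edge counts matching $\hat\varepsilon$. So the first step is to verify, by differentiating $f$ in $\gamma$ and $\varepsilon$ with $\alpha$ held fixed, that $(\hat\gamma(\alpha),\hat\varepsilon(\alpha))$ is the unique interior critical point and that $f(\alpha,\hat\gamma(\alpha),\hat\varepsilon(\alpha)) = 2\Phi(\alpha)$; this is a direct but somewhat lengthy computation with the integrals defining $\Psi_2$, whose derivatives in $\gamma$ and $\varepsilon$ are elementary by the fundamental theorem of calculus.

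Next I would establish that this interior critical point is a genuine maximum over the slice $\{(\gamma,\varepsilon): (\alpha,\gamma,\varepsilon)\in\mathcal R\}$, and that no larger value is attained on the boundary of the slice. The key inequality $f(\alpha,\gamma,\varepsilon) \le 2\Phi(\alpha)$ for all feasible $(\gamma,\varepsilon)$ is exactly the statement that the second moment is dominated (on the exponential scale) by the ``diagonal'' contribution, and this is precisely where the hypothesis $\lambda < \lambda_c$ enters — one needs $\Phi(\alpha) \le \Phi(\alpha^*)$ to be used with room to spare, or more precisely one needs the concavity of the overlap function to hold up to the stated threshold. I would argue this by showing the Hessian of $f$ restricted to the $(\gamma,\varepsilon)$ variables is negative definite throughout $\mathcal R$ (a computation: the second $\gamma,\varepsilon$-derivatives of $\Psi_2$ are sums of terms of the form $-1/(\text{positive linear expression})$, so concavity in each slice is essentially automatic), which gives that $(\hat\gamma,\hat\varepsilon)$ is the unique maximizer in the slice, hence $f(\alpha,\gamma,\varepsilon)\le 2\Phi(\alpha)$.

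Then, since $\max_\gamma\max_\varepsilon f(\alpha,\gamma,\varepsilon) = 2\Phi(\alpha)$ and $\Phi$ is maximized uniquely at $\alpha^*$ by the analysis following Lemma~\ref{prop:Z-alpha-first-moment}, the global maximum of $f$ over $\mathcal R$ is $2\Phi(\alpha^*) = f(\alpha^*,\gamma^*,\varepsilon^*)$, attained at $(\alpha^*,\gamma^*,\varepsilon^*)$. For the quadratic decay, I would combine two quadratic estimates: first, $2\Phi(\alpha^*) - 2\Phi(\alpha) \ge c_1|\alpha-\alpha^*|^2$ for $\alpha$ in a neighborhood (from $\Phi''(\alpha^*) < 0$, which must be checked using $\lambda < \lambda_c$ — this is the condition that $\alpha^*$ is a nondegenerate maximum of $\Phi$), together with a Taylor bound away from $\alpha^*$; and second, $2\Phi(\alpha) - f(\alpha,\gamma,\varepsilon) \ge c_2(|\gamma-\hat\gamma(\alpha)|^2 + |\varepsilon-\hat\varepsilon(\alpha)|^2)$ from the uniform negative-definiteness of the sliced Hessian over the compact region $\mathcal R$. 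Since $\hat\gamma,\hat\varepsilon$ are Lipschitz in $\alpha$, one has $|\gamma - \gamma^*|^2 \le 2|\gamma-\hat\gamma(\alpha)|^2 + 2|\hat\gamma(\alpha)-\hat\gamma(\alpha^*)|^2 \le 2|\gamma-\hat\gamma(\alpha)|^2 + O(|\alpha-\alpha^*|^2)$ and similarly for $\varepsilon$, so adding the two estimates and absorbing constants yields the claimed bound $f(\alpha^*,\gamma^*,\varepsilon^*) - f(\alpha,\gamma,\varepsilon) \ge C(|\alpha-\alpha^*|^2 + |\gamma-\gamma^*|^2 + |\varepsilon-\varepsilon^*|^2)$. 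I expect the main obstacle to be verifying the global inequality $f(\alpha,\gamma,\varepsilon) \le 2\Phi(\alpha)$ all the way up to $\lambda_c$ rather than just in a neighborhood of $\alpha^*$: near the boundary of $\mathcal R$ and for $\alpha$ far from $\alpha^*$, the slicewise concavity argument still gives $f \le 2\Phi(\alpha)$, but one must then invoke $\max_\alpha 2\Phi(\alpha) = 2\Phi(\alpha^*)$ and be careful that the ``$o(1)$'' in the definition of $\alpha_c(d)$ is chosen so that $\Phi$ has no competing near-maximum — this is the same second-moment-method threshold computation that underlies the whole paper, and it is the delicate quantitative heart of the argument.
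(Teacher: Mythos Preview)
Your overall strategy --- fix $\alpha$, optimize over $(\gamma,\varepsilon)$, then optimize over $\alpha$ --- matches the paper's, and the identity $f(\alpha,\hat\gamma(\alpha),\hat\varepsilon(\alpha)) = 2\Phi(\alpha)$ is indeed the key structural fact (it is Lemma~\ref{lem:second-moment-square-of-first}). But the central step fails: your claim that the Hessian of $f$ in the $(\gamma,\varepsilon)$ variables is negative definite throughout $\mathcal R$ is false. If it held, then $g(\alpha,\gamma) := f(\alpha,\gamma,\overline\varepsilon(\alpha,\gamma)) = \max_\varepsilon f(\alpha,\gamma,\varepsilon)$ would be concave in $\gamma$; but $\partial^2 g/\partial\gamma^2$ changes sign. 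It is negative for $\gamma$ near $\alpha^2$ (so $\gamma=\alpha^2$ is a local maximum), yet positive for $\gamma$ in a middle range of the form $[c_2\alpha/\ln(\alpha^{-1}),\,\alpha - c_3\alpha/\ln(\alpha^{-1})]$, and for $\alpha$ above roughly $\varepsilon\ln d/d$ there can be a \emph{second} local maximum of $g(\alpha,\cdot)$ near $\gamma = \alpha$ --- the near-diagonal configuration where the two independent sets almost coincide. Your heuristic that the $(\gamma,\varepsilon)$-second derivatives of $\Psi_2$ are ``sums of terms $-1/(\text{positive})$'' overlooks that $\gamma$ appears inside the integrands as well as in the limits, which produces positive contributions.

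The paper's proof confronts this two-maxima issue head-on. After reducing to $g$, it splits the range of $\gamma$ into several subintervals, shows $\gamma=\alpha^2$ is a local maximum, rules out stationary points in the middle subintervals, and then for the interval near $\gamma=\alpha$ \emph{compares values} at the two candidate maxima. The comparison uses $g(\alpha,\alpha^2) - g(\alpha,\alpha) = 2\Phi(\alpha) - (\alpha\ln\lambda + \Phi(\alpha))$, the free-entropy density of independent sets of size $\alpha n$, and bounds $g(\alpha,\gamma_4) - g(\alpha,\alpha)$ from above by integrating $\partial^2 g/\partial\gamma^2$ from any competing local maximum $\gamma_4$. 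This is precisely where $\alpha<\alpha_c$ is used quantitatively: it ensures the ``independent'' local maximum at $\gamma=\alpha^2$ beats the ``diagonal'' one near $\gamma=\alpha$. So your intuition that the threshold $\lambda_c$ governs the argument is correct, but it enters through a comparison of two competing local maxima, not through global slicewise concavity; the latter is simply not available here.
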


%
%

Finally, the following second moment-type bound is also proved in Section \ref{sec:tech-lemmas}.
\begin{proposition}\label{prop:second-moment-square-of-first}Let $G \sim \mathcal H(n,d)$ and $\lambda < \lambda_c$. Then,
\begin{align*}
\E((Z_G)^2) =  \widetilde \Theta(1) (\E(Z_G))^2 .
\end{align*}
\end{proposition}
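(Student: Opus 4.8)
The plan is to deduce Proposition~\ref{prop:second-moment-square-of-first} from the more refined statements already assembled, namely Lemma~\ref{prop:Z-alpha-first-moment}, Proposition~\ref{prop:f-approximates-Z-square} and Proposition~\ref{prop:f-global-max}. First I would record the two matching normalizations: on the one hand, $\E(Z_G) \ge \E(Z_{G,\alpha^*}) = \widetilde\Theta(1)\exp(n\Phi(\alpha^*))$ by Lemma~\ref{prop:Z-alpha-first-moment}, and since there are only $O(n)$ values of $\alpha$ and $\Phi$ is maximized at $\alpha^*$ (equation \eqref{eq:alpha-star-eqn}), in fact $\E(Z_G) = \widetilde\Theta(1)\exp(n\Phi(\alpha^*))$, so $(\E(Z_G))^2 = \widetilde\Theta(1)\exp(2n\Phi(\alpha^*))$. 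On the other hand $\E((Z_G)^2) = \sum_{\alpha,\gamma,\varepsilon} \E(Z^{(2)}_{G,\alpha,\gamma,\varepsilon}) = \widetilde\Theta(1)\max_{(\alpha,\gamma,\varepsilon)\in\mathcal R}\exp(nf(\alpha,\gamma,\varepsilon))$, again because the number of summands is polynomial in $n$ and each is $\exp(nf + O(\ln n))$ by Proposition~\ref{prop:f-approximates-Z-square}. By Proposition~\ref{prop:f-global-max} this maximum is attained at $(\alpha^*,\gamma^*,\varepsilon^*)$, so everything reduces to the single identity
\[
f(\alpha^*,\gamma^*,\varepsilon^*) = 2\Phi(\alpha^*).
\]

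The second step is to verify this identity. The natural interpretation is that at the "independent overlap" point $\hat\gamma(\alpha)=\alpha^2$, $\hat\varepsilon(\alpha)=\alpha(1-2\alpha)$, a random pair of independent sets each of density $\alpha$ behaves as if the two sets were chosen independently, so the exponential rate for pairs factors as twice the rate for a single set. Concretely, I would substitute $\gamma=\alpha^2$, $\varepsilon=\alpha(1-2\alpha)$ into the formula for $f$ and check that $H(\alpha)+H_1(\alpha^2,\alpha)+H_1(\alpha-\alpha^2,1-\alpha) = 2H(\alpha)$ for the entropic part, and that $d\,\Psi_2(\alpha,\alpha^2,\alpha(1-2\alpha))$ equals $2d\big((1-\alpha)\ln(1-\alpha) - \tfrac{1-2\alpha}{2}\ln(1-2\alpha)\big)$ for the edge part; adding the linear term $2\alpha\ln\lambda$ then gives $2\Phi(\alpha)$. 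Each of these is an elementary computation: the entropic identity follows from $H_1(x,y)$ being $-y$ times the binary entropy of $x/y$ evaluated via $\binom{yn}{xn}$-asymptotics, so $H_1(\alpha^2,\alpha)+H_1(\alpha-\alpha^2,1-\alpha)$ is the rate for choosing $\gamma n$ of the first set inside $T$ and $(\alpha-\gamma)n$ of the complement inside $T$, which at $\gamma=\alpha^2$ recombines to the rate $H(\alpha)$ for a second independent set; the integral identities for $\Psi_2$ collapse similarly after the substitution because the arguments of the logarithms telescope. It is worth noting that this factorization at $\alpha^2$ is exactly the content of the standard second moment heuristic, and that $\lambda<\lambda_c$ enters only through Proposition~\ref{prop:f-global-max}, which guarantees the global maximizer really is this factorizing point rather than a "condensed" interior point.

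Putting the pieces together: $\E((Z_G)^2) = \widetilde\Theta(1)\exp(nf(\alpha^*,\gamma^*,\varepsilon^*)) = \widetilde\Theta(1)\exp(2n\Phi(\alpha^*)) = \widetilde\Theta(1)(\E(Z_G))^2$, which is the claim. The main obstacle is the bookkeeping in Step~2 — correctly matching the combinatorial factors ${\alpha n \choose \gamma n}{(1-\alpha)n\choose(\alpha-\gamma)n}$ and the products in \eqref{eq:second-moment-product} against $f$, and confirming that the $\Psi_2$ integrals evaluate to the $\Phi$ edge term after the substitution; this is where sign errors and off-by-one issues in the integration limits would most easily creep in. Everything else is an application of results already in hand (the polynomial-count reduction of a sum to its max term, and the location/concavity of the maximizer from Proposition~\ref{prop:f-global-max}).
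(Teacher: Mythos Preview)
Your overall strategy matches the paper's: bound the second moment by $\exp(nf(\alpha^*,\gamma^*,\varepsilon^*))$ via Propositions~\ref{prop:f-approximates-Z-square} and~\ref{prop:f-global-max}, identify $f(\alpha^*,\gamma^*,\varepsilon^*)=2\Phi(\alpha^*)$ (this is exactly Lemma~\ref{lem:second-moment-square-of-first} in the paper, proved by the same substitution you outline), and match with $(\E Z_G)^2$ via Lemma~\ref{prop:Z-alpha-first-moment}.

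There is, however, one genuine gap. You write $\E((Z_G)^2)=\sum_{\alpha,\gamma,\varepsilon}\E(Z^{(2)}_{G,\alpha,\gamma,\varepsilon})$, but by definition $Z^{(2)}_{G,\alpha,\gamma,\varepsilon}$ counts pairs $(S,T)$ with $|S|=|T|=\alpha n$, so $\sum_{\gamma,\varepsilon}\E(Z^{(2)}_{G,\alpha,\gamma,\varepsilon})=\E((Z_{G,\alpha})^2)$ only. Hence your sum equals $\sum_\alpha \E((Z_{G,\alpha})^2)$, which is merely a \emph{lower} bound on $\E((Z_G)^2)=\sum_{\alpha,\alpha'}\E(Z_{G,\alpha}Z_{G,\alpha'})$; the off-diagonal terms with $\alpha\neq\alpha'$ are missing. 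Since the nontrivial direction of the proposition is the upper bound on $\E((Z_G)^2)$, your argument as written does not establish it. The paper closes this gap with a Cauchy--Schwarz step: $\sum_{\alpha,\alpha'}\E(Z_{G,\alpha}Z_{G,\alpha'})\le\big(\sum_\alpha(\E Z_{G,\alpha}^2)^{1/2}\big)^2\le n\sum_\alpha\E(Z_{G,\alpha}^2)=\widetilde\Theta(1)\sum_{\alpha,\gamma,\varepsilon}\E(Z^{(2)}_{G,\alpha,\gamma,\varepsilon})$, after which your argument goes through unchanged.
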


\section{Partition function of a punctured random graph}\label{sec:tildeG}
In this section we study the effect on the hardcore measure of a $d$-regular random graph of conditioning on the spins of a small number of the vertices. In order to do this, we analyze the partition function of a punctured graph
$\tilde{G}$ obtained from a $d$-regular random graph $G$ by deleting a small fraction of vertices and their
neighborhoods.
Define the following
quantities with respect to a graph $G=(V,E)$. Let
$d(u,v)= d_G(u,v)$ denote the distance between two vertices $u,v \in
V$. For a vertex $u \in V$ and integer $r$, the {\em $r$-neighborhood
  of $u$}, denoted $B_r(u)$ and its (vertex) boundary are defined as
\[
B_r(u) := \{v \in V : d(u,v) \le r\}, \ \partial B_r(u) := B_r(u)
\setminus B_{r-1}(u).
\]

%
%
%

\begin{lemma}\label{lem:close-centers-unlikely}
Let $G=(V,E) \sim \mathcal H(n,d)$. Let $S \subset V$ be a set of
vertices with $|S| = n^{3/5}$ and let  $r$ be some large constant. Then, the expected number of $u \in S$
such that the neighborhood $B_r(u)$ contains another vertex in $S$ is
$O(n^{1/5})$ and the probability of a neighbourhood with 3 vertices in $S$ is  $O(n^{-1/5})$.  Furthermore, with high probability, for all $u\in S$ the neighbourhood  $B_r(u)$ is  a tree.
\end{lemma}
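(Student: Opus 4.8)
The plan is to carry out everything inside the configuration model, using the fact recalled above that the uniformly random pairing of the $dn$ half-edges may be revealed one step at a time, each unmatched half-edge being paired with a uniformly random still-unmatched one. Since $r$ is a constant, for every $u$ the ball $B_r(u)$ contains at most $D_r := 1 + d\sum_{j=0}^{r-1}(d-1)^j$ vertices, a constant depending only on $d$ and $r$, and it is revealed by exploring at most $dD_r$ pairings in a breadth-first fashion from $u$. The single estimate from which everything follows is: for any two \emph{fixed} distinct vertices $u,v$,
\[
\P\big(v \in B_r(u)\big) = O(1/n),
\]
with the constant depending only on $d,r$. This holds because, exploring $B_r(u)$ one half-edge at a time, at each of the at most $dD_r$ steps the newly matched half-edge lands on one of the (at most $d$) half-edges of $v$ with conditional probability at most $d/(dn - 2dD_r - 1) = O(1/n)$, \emph{whatever} has been revealed so far, and a union bound over the steps gives the claim. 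The same exploration gives two further estimates: for fixed distinct $u,v,w$,
\[
\P\big(v \in B_r(u) \text{ and } w \in B_r(u)\big) = O(1/n^2),
\]
because both $v$ and $w$ joining $B_r(u)$ forces two distinct ``hitting'' steps, and summing $O(1/n)\cdot O(1/n)$ over the $O(D_r^2)$ ordered pairs of steps yields $O(1/n^2)$; and
\[
\P\big(B_r(u) \text{ is not a tree}\big) = O(1/n),
\]
because $B_r(u)$ fails to be a tree precisely when, during the exploration from $u$, some revealed half-edge is paired with a half-edge of an already-discovered vertex (this includes self-loops and multi-edges), there being at most $dD_r$ such half-edges available at any moment, so each step closes a cycle with probability $O(1/n)$.

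Given these three estimates, the three assertions of the lemma follow from linearity of expectation and union bounds over $S$. First,
\begin{align*}
\E\big(\#\{u\in S:\, B_r(u)\cap(S\setminus\{u\})\neq\emptyset\}\big) &\le \sum_{u\in S}\sum_{v\in S\setminus\{u\}}\P\big(v\in B_r(u)\big) \\
&\le |S|^2\cdot O(1/n) = O\big(n^{6/5}/n\big) = O\big(n^{1/5}\big).
\end{align*}
Second, the event that some $u\in S$ has $|B_r(u)\cap S|\ge 3$ is contained in the union, over distinct triples $u,v,w\in S$, of the events $\{v,w\in B_r(u)\}$, so it has probability at most $|S|^3\cdot O(1/n^2)=O\big(n^{9/5}/n^2\big)=O\big(n^{-1/5}\big)$. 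Third, the event that $B_r(u)$ fails to be a tree for some $u\in S$ has probability at most $|S|\cdot O(1/n)=O\big(n^{3/5}/n\big)=O\big(n^{-2/5}\big)=o(1)$, so with high probability every $B_r(u)$, $u\in S$, is a tree.

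The argument is essentially routine; the only points that need care are (i) checking that the per-step conditional hitting probability is genuinely $O(1/n)$ \emph{uniformly} over the bounded exploration history revealed so far---this is exactly where it matters that $r$, hence $D_r$, is a constant---and (ii) in the bound $\P(v,w\in B_r(u))=O(1/n^2)$, setting up the exploration so that the two hitting events are charged to two genuinely distinct revelation steps rather than double-counting one. Finally, since $S$ enters only through $|S|$, one may take $S$ to be an arbitrary fixed set of the prescribed size; in the application where $S$ is a uniformly random such set this changes nothing, upon conditioning on $S$.
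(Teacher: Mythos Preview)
Your proposal is correct and is essentially a fleshed-out version of the paper's one-sentence proof: the paper merely invokes ``the independence of the set $S$ and the graph $G$ and a union bound'' for the first two claims, and for the tree claim observes that the expected number of cycles of length at most $2r$ is bounded. Your local breadth-first exploration argument for $\P(B_r(u)\text{ not a tree})=O(1/n)$ is a direct substitute for the paper's global short-cycle count, and both lead to the same $O(n^{-2/5})$ union bound over $S$.
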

\begin{proof}
The bounds on the number of vertices of $S$ in a local neighbourhood follow from the independence of the set $S$ and the graph $G$ and a union bound.  The number of cycles of length at most $2r$ has constant expected value which implies that with high probability the neighbourhoods of vertices in $S$ are trees.
\end{proof}

Let $G=(V,E) \sim \mathcal H(n,d)$ and fix a large constant $r$. Let $S \subset V$ be a uniformly chosen set of
vertices with $|S| = n^{3/5}$.
Let $\tilde G=(\tilde
V,\tilde E)$ be the graph obtained by deleting from $V$ the set of
vertices  $\displaystyle\cup_{v \in S}B_{r-1}(v)$ and any edges adjacent to
these vertices.
Define
\begin{align}\label{eq:def-of-B}
B:= \tilde G \bigcap \left(\bigcup_{u \in S} \partial B_r(u)\right).
\end{align}
Let
\begin{align}\label{eq:centers-S}
S' = \{s \in S \ : \ \forall s' \in S \setminus \{s\}, B_{r}(s) \cap B_{r}(s')= \varnothing, B_r(s) \text{ is a tree} \}
\end{align} 
Let $k = |S'|$. Let $s_1,\ldots,s_k$ be an arbitrary ordering of the elements of $S'$ and for $1 \le i \le k$ define $W_i := \partial B_r (s_i)$. Define $W_{k+1}:=B \setminus \cup_{i=1}^k W_i$.

\begin{corollary}\label{cor:Gtilde-degrees}
The vertices of $B$ have degree $d-1$ or $d-2$ in $\tilde G$ with high probability. With high probability, the number of vertices in $\tilde G$
of degree $d-2$ is $O(n^{\frac 15})$ and the number of vertices of degree
$d-1$ is $n^{\frac 35}(1-o(1))d(d-1)^r$. The size of  $S'$, $k=(1-o(1))n^{\frac 35}$.
\end{corollary}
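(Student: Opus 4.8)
The plan is to deduce Corollary~\ref{cor:Gtilde-degrees} from Lemma~\ref{lem:close-centers-unlikely} by carefully accounting for how many incident edges each vertex of $B$ loses when we pass from $G$ to $\tilde G$. We work throughout on the high-probability events supplied by that lemma, together with the same events with $2r$ in place of $r$ (legitimate, since the lemma holds for every large constant): every ball $B_r(u)$ with $u\in S$ is a tree, no vertex of $V$ has three vertices of $S$ within distance $r$ of it, and the number of $u\in S$ having another element of $S$ within distance $2r$ has expectation $O(n^{1/5})$.

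First we identify the degrees, proving the first assertion. Fix $v\in B$, so $v\in\partial B_r(u)$ for some $u\in S$ and $v$ survives in $\tilde G$. Because $B_r(u)$ is a tree, $v$ has a unique neighbour $w$ with $d_G(u,w)=r-1$, and $w\in B_{r-1}(u)$ is deleted, so $\deg_{\tilde G}(v)\le d-1$. Conversely, an edge $\{v,w'\}$ is removed only if $w'\in B_{r-1}(u')$ for some $u'\in S$, which forces $d_G(v,u')\le r-1$; by the no-triple event $v$ has at most two such centres, and for each one $v$ loses exactly one edge (again using that each individual $B_r(u)$ is a tree, so there is a unique neighbour at distance $r-1$). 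Hence $\deg_{\tilde G}(v)\in\{d-1,d-2\}$, with degree $d-2$ occurring precisely when $v$ lies on the spheres $\partial B_r(u)$ and $\partial B_r(u')$ of two distinct centres.

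Next we count the exceptional and boundary vertices. Each degree-$(d-2)$ vertex determines an unordered pair $\{u,u'\}\subseteq S$ with $d_G(u,u')\le 2r$, and each such pair accounts for only $O(1)$ of them (implied constant depending on $d,r$), so the number of degree-$(d-2)$ vertices is $O(1)$ times the number of such pairs. Since $S$ is independent of $G$ and $|B_{2r}(u)|=O(1)$, the expected number of such pairs is $O(|S|^2/n)=O(n^{1/5})$; Markov gives $O(n^{1/5+o(1)})$ with high probability, and a routine second-moment estimate (indicators over disjoint pairs of centres are nearly independent, and triples of mutually close centres are negligible) sharpens this to $O(n^{1/5})$ with high probability. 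The same input shows that all but $O(n^{1/5})$ centres $s\in S$ have $B_r(s)$ disjoint from every other $B_r(s')$; since every $B_r(s)$ is a tree, $|S\setminus S'|=O(n^{1/5})$, hence $k=|S'|=(1-o(1))n^{3/5}$. Finally, for $s_i\in S'$ the ball $B_r(s_i)$ is a depth-$r$ $d$-regular tree disjoint from all other $B_r(s')$, so every vertex of $W_i=\partial B_r(s_i)$ survives and loses exactly its one neighbour toward $s_i$ (no neighbour can lie in $B_{r-1}(s')$ for another centre $s'$, as that would give $d_G(s_i,s')\le 2r$, contradicting $s_i\in S'$); thus each vertex of $W_i$ has degree exactly $d-1$. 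The sets $W_1,\dots,W_k$ are pairwise disjoint of common size $|\partial B_r(s_1)|$, so the number of degree-$(d-1)$ vertices lies between $\sum_{i=1}^k|W_i|=(1-o(1))n^{3/5}|\partial B_r(s_1)|$ and $|B|\le\sum_{u\in S}|\partial B_r(u)|=n^{3/5}|\partial B_r(s_1)|$, the discrepancy being absorbed by the $O(n^{1/5})$ vertices of $W_{k+1}$ attached to bad centres; this gives the stated count.

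The only genuine work is, I expect, twofold. First, making the ``loses exactly one edge'' statement for good centres airtight: this uses that good centres have pairwise-disjoint $r$-balls, equivalently $d_G(s,s')>2r$, so a sphere vertex of a good centre cannot have a neighbour deleted on account of a \emph{different} centre, and it uses that each $B_r(u)$ --- though not necessarily the union over nearby centres --- is a tree. Second, the second-moment computation needed to upgrade the first-moment bound $O(n^{1/5})$ to a with-high-probability bound of the same order (if one insists on the literal $O(n^{1/5})$ rather than $O(n^{1/5+o(1)})$). Everything else is bookkeeping layered directly on top of Lemma~\ref{lem:close-centers-unlikely}.
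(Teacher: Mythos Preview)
Your proof is correct and takes the same approach as the paper's brief argument: both deduce the degree dichotomy from the tree-structure and no-triple conclusions of Lemma~\ref{lem:close-centers-unlikely}, and the counts from the first-moment bound there together with a second-moment computation. One small slip: where you write ``$d_G(v,u')\le r-1$'' you mean $d_G(v,u')\le r$ (indeed equality, since $v\in B$ survives deletion), but the subsequent reasoning is unaffected.
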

\begin{proof}
Suppose a vertex $v \in B$ is in $\partial B_r(u_1) \cap
\partial B_r(u_2)$ for
some $u_1,u_2$. We know that with high probability it is not in any third $\partial B_r(u_3)$, otherwise
there are 3 centers close together contradicting Lemma
\ref{lem:close-centers-unlikely}. Therefore its degree in $\tilde G$ is at least
$d-2$ since  there are at most two vertices adjacent to it in
$\displaystyle\cup_{v \in S}B_{r-1}(v)$. In the other case, $v \in \partial B_r(u)$ for a unique
vertex $u \in S$ and hence its degree in $\tilde G$ is $d-1$. The bounds on the numbers of these vertices follow by Lemma \ref{lem:close-centers-unlikely} and applying the second moment method.
The bound on the size of $k$ follows immediately from Lemma \ref{lem:close-centers-unlikely}.
\end{proof}

In what follows we will sometimes work in the conditional space of the configuration model where $G$ is such that the conclusions of Lemma \ref{lem:close-centers-unlikely} and Corollary \ref{cor:Gtilde-degrees} hold for $\tilde G$. Since the configuration model allows us to expose edges and maintain the uniform distribution over pairings of the unmatched pairs, under the conditioning, $\tilde G$ is a graph chosen according to the configuration model where the degrees of the vertices are modified appropriately, and we denote this set of graphs by $\hat{ \mathcal H}(n,d)$. We use $\hat{\mathbb P}$ and $\hat{\mathbb E}$ to denote the corresponding conditional expectation and probabilities.

%

\subsection{The First Moment of the Partition Function of $\tilde G$}
Let $B$ be the subset of vertices defined in \eqref{eq:def-of-B} and let $\sigma \in \{0,1\}^B$. Define $Z_{G,\sigma}$ to be the partition
function over independent sets of $G$ whose restriction to $B$ is $\sigma$,
i.e.,
\[
Z_{G,\sigma} \defeq \displaystyle\sum_{\omega \in I(G) :
  \omega(A)=\sigma} \lambda^{|\omega|}.
\]
Similarly, define
\[
Z_{\tilde G,\sigma} \defeq \displaystyle\sum_{\omega \in I(\tilde G) :
  \omega(A)=\sigma} \lambda^{|\omega|}.
\]

In this section, we will show that in expectation, for the boundary $B$ as defined in \eqref{eq:def-of-B} and any $\sigma \in \{0,1\}^B$, $Z_{\tilde{G},\sigma}$ is
essentially proportional to a product measure on $B$.
Let $m=|V(\tilde G) \setminus B|$.
Define $Z_{\tilde G,\alpha,\sigma}$ to be the partition function for independent
sets of $\tilde G$ whose restriction to $B$ is $\sigma \in \{0,1\}^B$  and for which
$\alpha$ fraction of the vertices $V(\tilde G) \setminus B$ are in the
independent set
\[
Z_{\tilde G,\alpha,\sigma} \defeq \displaystyle\sum_{\omega \in I(\tilde G) :
  \omega(B)=\sigma, \sum_{v \in V \setminus B}\omega_v = \alpha m}
\lambda^{|\omega|}.
\]
Fix an independent set $\omega$ of $\tilde G$ whose restriction to $B$ is $\sigma$ such that $\sum_{v \in V(\tilde G) \setminus B} \omega_v = m$.
Let $L=|\sigma|$ and let $L_i$ be the number of vertices in $\sigma$ of degree $d-i$ for $i=1,2$. Let $M_i$ denote the number of vertices of $B$ of degree $d-i$ for $i=1,2$.  We can calculate the expectation of the partition function as before using the exploration process in the configuration model. Let $N_1 = (d-1)L_1+ (d-2)L_2 + d\alpha m$ be the number of half-edges adjacent to a vertex in the independent set.
 Let $N_T=(d-1)M_1 + (d-2)M_2 + d m$ be the total number of
half-edges overall.
Calculating the probability that the pairing of the half edges does not pair vertices which are in the independent set, we have 
\begin{align}
\label{eq:fraction}
\hat \E\left(Z_{\tilde G, \alpha,\sigma}\right) & = \lambda^{L_1+L_2+\alpha m}
  {m \choose \alpha m}
  \frac{\displaystyle\prod_{i=0}^{N_1-1}(N_T - N_1 - i )}{\displaystyle\prod_{i=0}^{N_1-1}(N_T-1-2i)
}.
\end{align}

In what follows, let $G \sim \mathcal H(n,d)$ and let $\tilde G$ be defined as above. Recall that $\alpha^*$ is given by the solution to \eqref{eq:alpha-star-eqn}. Let $\sigma_0$ denote the empty independent set on $B$.

\begin{proposition}\label{prop:ratio-of-first-moments}
Fix $\lambda > 0$. 
\begin{enumerate}[(1)]
\item \label{proppart:general-alpha} For all $\sigma \in \{0,1\}^B$, and $0 \le \alpha \le \frac 12$,
\begin{align*}
\frac{\hat \E\left(Z_{\tilde G,\alpha, \sigma}\right)}{\hat \E\left(Z_{\tilde G,\alpha,{\sigma_0}} \right)}= \exp\left(O(n^{\frac 15})\right)
\left( \lambda
\left(\frac{1-2\alpha}{1-\alpha}\right)^{d-1} \right)^{|\sigma_B|}.
\end{align*}

\item \label{proppart:firstmom-alpha-close-to-alpha*} Let $\alpha$ be such that $|\alpha-\alpha^*| < Cn^{-\frac 25}$ for a constant $C>0$. Then,
\begin{align*}
\frac{\hat \E\left(Z_{\tilde G,\alpha, \sigma}\right)}{\hat \E\left(Z_{\tilde G,\alpha,{\sigma_0}} \right)}= \exp\left(O(n^{\frac 15})\right)\chi(\sigma)
\end{align*}
where  
\begin{align*}
\chi(\sigma)=\left( \lambda
\left(\frac{1-2\alpha^*}{1-\alpha^*}\right)^{d-1} \right)^{|\sigma|}.
\end{align*}
\end{enumerate}
\end{proposition}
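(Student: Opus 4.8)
The plan is to work directly from the exact identity \eqref{eq:fraction} for $\hat\E(Z_{\tilde G,\alpha,\sigma})$ and to track how it varies with $\sigma$ while $\alpha$ and the geometry of $\tilde G$ are held fixed. Write $L:=|\sigma|=L_1+L_2$ for the number of occupied boundary vertices, $L_i$ the number of these of degree $d-i$, and set $N_1^0:=d\alpha m$ (the value of $N_1$ at $\sigma=\sigma_0$) and $\Delta:=N_1-N_1^0=(d-1)L_1+(d-2)L_2$. First I would record the relevant scales, all consequences of Lemma~\ref{lem:close-centers-unlikely} and Corollary~\ref{cor:Gtilde-degrees}: each $B_r(u)$ is a tree on $O(1)$ vertices, so $|B|=O(n^{3/5})$, $m=n-O(n^{3/5})$, $N_T=dm+(d-1)M_1+(d-2)M_2=dn-O(n^{3/5})$, and $M_2=O(n^{1/5})$; consequently $0\le L_2\le M_2=O(n^{1/5})$ and $0\le\Delta\le(d-1)|B|=O(n^{3/5})=o(N_T)$. (The $O(n^{1/5})$ centers in $S\setminus S'$ whose balls are not tree-like perturb $N_T$, $|B|$ and the $M_i$ only by $O(n^{1/5})$ and are harmless.) I would also restrict to $\alpha$ bounded away from $1/2$ --- the only regime used later --- so that $N_T-N_1^0=dm(1-\alpha)+O(n^{3/5})$ and $N_T-2N_1^0=dm(1-2\alpha)+O(n^{3/5})$ are both $\Omega(n)$.

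For part (1): dividing \eqref{eq:fraction} for $\sigma$ by the same expression for $\sigma_0$ cancels the factors $\lambda^{\alpha m}$ and $\binom{m}{\alpha m}$, and after taking logarithms,
\[
\log\frac{\hat\E(Z_{\tilde G,\alpha,\sigma})}{\hat\E(Z_{\tilde G,\alpha,\sigma_0})}=L\log\lambda+\sum_{i=0}^{N_1-1}\log(N_T-N_1-i)-\sum_{i=0}^{N_1^0-1}\log(N_T-N_1^0-i)-\sum_{i=N_1^0}^{N_1-1}\log(N_T-1-2i).
\]
Reindexing $j=N_T-N_1-i$ rewrites $\sum_{i=0}^{N_1-1}\log(N_T-N_1-i)$ as $\sum_{j=N_T-2N_1+1}^{N_T-N_1}\log j$, and similarly for $N_1^0$; since $N_T-2N_1=(N_T-2N_1^0)-2\Delta$ and $N_T-N_1=(N_T-N_1^0)-\Delta$, almost all of these logarithms cancel in the difference, leaving $2\Delta\log(N_T-2N_1^0)-\Delta\log(N_T-N_1^0)$ plus the error of replacing $O(\Delta)$ surviving terms $\log(N_T-2N_1^0-t)$, $\log(N_T-N_1^0-t)$ with $|t|=O(\Delta)$ by their values at $t=0$; each such replacement costs $O(\Delta/N_T)=O(n^{-2/5})$, for a total of $O(\Delta\cdot n^{-2/5})=O(n^{1/5})$. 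The same reasoning gives $\sum_{i=N_1^0}^{N_1-1}\log(N_T-1-2i)=\Delta\log(N_T-2N_1^0)+O(n^{1/5})$. Collecting terms leaves
\[
\log\frac{\hat\E(Z_{\tilde G,\alpha,\sigma})}{\hat\E(Z_{\tilde G,\alpha,\sigma_0})}=L\log\lambda+\Delta\,\log\frac{N_T-2N_1^0}{N_T-N_1^0}+O(n^{1/5}).
\]

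To finish part (1) I would use the scales recorded above: $\frac{N_T-2N_1^0}{N_T-N_1^0}=\frac{1-2\alpha}{1-\alpha}\bigl(1+O(n^{-2/5})\bigr)$, so (as $\Delta=O(n^{3/5})$) the middle term is $\Delta\log\frac{1-2\alpha}{1-\alpha}+O(n^{1/5})$; and $\Delta=(d-1)L_1+(d-2)L_2=(d-1)L-L_2=(d-1)|\sigma|+O(n^{1/5})$, so the middle term is $(d-1)|\sigma|\log\frac{1-2\alpha}{1-\alpha}+O(n^{1/5})$. Exponentiating gives $\frac{\hat\E(Z_{\tilde G,\alpha,\sigma})}{\hat\E(Z_{\tilde G,\alpha,\sigma_0})}=\exp(O(n^{1/5}))\bigl(\lambda(\tfrac{1-2\alpha}{1-\alpha})^{d-1}\bigr)^{|\sigma|}$. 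For part (2), since $x\mapsto\log\frac{1-2x}{1-x}$ is smooth near $\alpha^*$, the hypothesis $|\alpha-\alpha^*|<Cn^{-2/5}$ gives $\log\frac{1-2\alpha}{1-\alpha}=\log\frac{1-2\alpha^*}{1-\alpha^*}+O(n^{-2/5})$; multiplying by $(d-1)|\sigma|=O(n^{3/5})$ shows that replacing $\frac{1-2\alpha}{1-\alpha}$ by $\frac{1-2\alpha^*}{1-\alpha^*}$ in the conclusion of part (1) changes the exponent by only $O(n^{1/5})$, which is precisely $\exp(O(n^{1/5}))\chi(\sigma)$.

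No step is deep: the whole argument is a first-moment computation on the modified configuration model $\hat{\mathcal H}(n,d)$, and the only thing to watch is the error accounting. The main (mild) obstacle is checking that every quantity that must be negligible --- the telescoped remainders, the passage $N_T-2N_1^0\to dm(1-2\alpha)$, and the replacement $\Delta\to(d-1)|\sigma|$ --- is off by a relative $O(n^{-2/5})$ and multiplied by something of size $O(n^{3/5})$, which is exactly the quantitative content of Lemma~\ref{lem:close-centers-unlikely} and Corollary~\ref{cor:Gtilde-degrees}; this is also where the $\exp(O(n^{1/5}))$ (rather than $1+o(1)$) comes from. The one conceptual point worth stating explicitly is that \eqref{eq:fraction} is valid for \emph{every} $\sigma\in\{0,1\}^B$, including configurations with no independent-set extension in $\tilde G$, because it is an identity in expectation over the uniform pairing of the free half-edges of $\tilde G$.
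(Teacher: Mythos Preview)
Your proposal is correct and follows essentially the same approach as the paper: both take the ratio of the exact identity \eqref{eq:fraction} at $\sigma$ and at $\sigma_0$, extract the leading factor $\bigl(\tfrac{1-2\alpha}{1-\alpha}\bigr)^{N_1-N_1'}$, and control the error using $N_1-N_1'=O(n^{3/5})$, $L_2=O(n^{1/5})$, and $m=(1-o(1))n$ from Corollary~\ref{cor:Gtilde-degrees}. The only cosmetic difference is that the paper cancels the products algebraically to obtain the closed-form ratio $\lambda^{L}\prod_{i=0}^{N_1-N_1'-1}\frac{N_T-2N_1'-2i}{N_T-N_1'-i}$ and then estimates each factor, whereas you take logarithms and reindex; the error bookkeeping and the passage to part~(2) via $|\alpha-\alpha^*|<Cn^{-2/5}$ are identical.
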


\begin{proof}
We compare the formula \eqref{eq:fraction} for $\sigma$ and ${\sigma_0}$. Let $N_1', N_T'$ be the corresponding quantities for $\sigma_0$ as defined before. Note that for $i=1,2$, $L_i' = 0$ and $N_T' = N_T$. Comparing the numerators and denominators of the fraction in \eqref{eq:fraction} we obtain that
\begin{align*}
\frac{\hat \E(Z_{\tilde G,
  \alpha,\sigma})}{\hat \E(Z_{\tilde G,
  \alpha,{\sigma_0}})} & = \lambda^{L_1+L_2}\frac{\displaystyle\prod_{i=0}^{N_1-N_1'-1} (N_T-2N_1' -2i)
}{\displaystyle\prod_{i=0}^{N_1-N_1'-1}(N_T-N_1'-i)}
 = \lambda^{L_1+L_2} \left(\frac{1-2\alpha}{1-\alpha}\right)^{N_1-N_1'} \times \\
& \times \frac{\displaystyle\prod_{i=0}^{N_1-N_1'-1} \left(1+ \frac{1}{(1-2\alpha) d m}
  \left((d-1)M_1 + (d-2)M_2-2i \right)\right)
}{\displaystyle\prod_{i=0}^{N_1-N_1'-1}\left(1+ \frac{1}{(1-\alpha) d m}
  \left((d-1)M_1 + (d-2)M_2 -i \right)\right)   } \\
& = \lambda^{L_1+L_2} \left(\frac{1-2\alpha}{1-\alpha}\right)^{N_1-N_1'} \exp\left(O\left(\frac{(N_1-N_1') n^{\frac{3}{5}})}{dm}\right)\right),
\end{align*}
where the last line follows since $M_1 \le O(n^{\frac 35})$ and $M_2 \le O(n^{\frac 15})$.
Since $N_1 - N_1' = (d-1)L_1 + (d-2)L_2 \le O(n^{\frac 35})$ and $m = n(1-o(1))$, we obtain that
\begin{align*}
&  \frac{\hat \E\left(Z_{\tilde G, \alpha,\sigma}\right) }{\hat \E\left(Z_{\tilde G, \alpha,{\sigma_0}}\right) } =  \lambda^{L_1+L_2}
\exp\left(O(n^{\frac 15})\right)
\left(\frac{1-2\alpha}{1-\alpha}\right)^{(d-1)L_1+(d-2)L_2}  \\
& =  \exp\left(O(n^{\frac 15})\right)\left(\lambda \left(\frac{1-2\alpha}{1-\alpha}\right)^{d-1}\right)^{|\sigma|} \left(\frac{1-2\alpha}{1-\alpha}\right)^{- L_2} = \exp\left(O(n^{\frac 15})\right) \left(\lambda \left(\frac{1-2\alpha}{1-\alpha}\right)^{d-1}\right)^{|\sigma|} .
\end{align*}
The last bound follows since $L_2 \le O(n^{\frac 15})$ by 
the assumed conditioning, giving part \eqref{proppart:general-alpha} of the proposition. 
Finally, by the assumption that $|\alpha-\alpha^*| \le Cn^{-\frac{2}{5}}$ the last expression above can be bounded by
\begin{align*}
& = \exp\left(O(n^{\frac 15})\right) \left(\lambda \left(\frac{1-2\alpha^*}{1-\alpha^*}\right)^{d-1}\right)^{|\sigma|},
\end{align*}
completing part \eqref{proppart:firstmom-alpha-close-to-alpha*} of the proposition.
\end{proof}

\begin{proposition}\label{prop:tildeG-first-mom-alpha-star}
For all $\sigma \in \{0,1\}^B$, and for large enough constant $C = C(\lambda,d)$,
\begin{align*}
\hat \E(Z_{\tilde G,\sigma}) = (1-o(1)) \sum_{\alpha: |\alpha - \alpha^*| \le  Cn^{-\frac 25}} \hat \E(Z_{\tilde G,\alpha,\sigma}).
\end{align*}
\end{proposition}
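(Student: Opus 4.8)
Since $Z_{\tilde G,\sigma}=\sum_\alpha Z_{\tilde G,\alpha,\sigma}$ by definition, the statement is a Laplace-type concentration claim for the one-parameter family $\alpha\mapsto\hat\E(Z_{\tilde G,\alpha,\sigma})$, $\alpha$ ranging over the $O(n)$ admissible values in $\{0,\tfrac1m,\tfrac2m,\dots\}\cap[0,\tfrac12]$: in expectation, the mass sits at densities within $Cn^{-2/5}$ of $\alpha^*$. The plan is to extract a smooth ``effective potential''. By part~(1) of Proposition~\ref{prop:ratio-of-first-moments}, $\ln\hat\E(Z_{\tilde G,\alpha,\sigma})=|\sigma|\,h(\alpha)+\ln\hat\E(Z_{\tilde G,\alpha,\sigma_0})+O(n^{1/5})$ with $h(\alpha):=\ln\lambda+(d-1)\ln\tfrac{1-2\alpha}{1-\alpha}$; and running the Stirling estimate \eqref{lem:product-f-comparison} through the formula \eqref{eq:fraction} for $\sigma_0$ --- exactly as in the proof of Lemma~\ref{prop:Z-alpha-first-moment}, but now carrying the $\Theta(n^{3/5})$ correction $\Delta:=(d-1)M_1+(d-2)M_2$ to the half-edge count $N_T=dm+\Delta$ through a first-order Taylor expansion in $\Delta/(dm)=O(n^{-2/5})$ --- gives $\ln\hat\E(Z_{\tilde G,\alpha,\sigma_0})=m\Phi(\alpha)+\Delta\,g(\alpha)+O(n^{1/5})$, uniformly for $\alpha$ in any fixed compact subinterval of $(0,\tfrac12)$, with $g(\alpha):=\ln(1-\alpha)-\tfrac12\ln(1-2\alpha)$. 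Thus on such an interval
\[
\tfrac1m\ln\hat\E(Z_{\tilde G,\alpha,\sigma})=\Phi_\sigma(\alpha)+O(n^{-4/5}),\qquad \Phi_\sigma(\alpha):=\Phi(\alpha)+\tfrac{|\sigma|}{m}h(\alpha)+\tfrac{\Delta}{m}g(\alpha),
\]
and since $|\sigma|\le|B|=O(n^{3/5})$ and $\Delta=O(n^{3/5})$ while $m=(1-o(1))n$, the correction $\Phi_\sigma-\Phi$ is $O(n^{-2/5})$ in $C^2$-norm there.

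I would then use that $\Phi$ has a unique non-degenerate maximum at $\alpha^*$ --- $\Phi'(\alpha^*)=0$, $\Phi''(\alpha^*)<0$, $\Phi(\alpha)<\Phi(\alpha^*)$ for $\alpha\ne\alpha^*$ in $[0,\tfrac12]$ --- which is a direct check from \eqref{eq:Phi1}--\eqref{eq:alpha-star-eqn} (and is the one-dimensional shadow of Proposition~\ref{prop:f-global-max}). Fix $\rho=\rho(\lambda,d)\in(0,\alpha^*)$ small enough that $[\alpha^*-\rho,\alpha^*+\rho]$ lies in a compact subinterval of $(0,\tfrac12)$ on which the expansion above holds and $\Phi''\le-2c<0$. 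Standard perturbation theory then supplies, for $n$ large, a unique maximizer $\alpha_\sigma$ of $\Phi_\sigma$ on this interval with $|\alpha_\sigma-\alpha^*|=O(n^{-2/5})$ and $\Phi_\sigma(\alpha_\sigma)-\Phi_\sigma(\alpha)\ge c\min\{(\alpha-\alpha_\sigma)^2,\rho^2\}$, all constants uniform in $\sigma$. Split the admissible $\alpha$ into the near set $N=\{|\alpha-\alpha^*|\le Cn^{-2/5}\}$, the mesoscopic set $M=\{Cn^{-2/5}<|\alpha-\alpha^*|\le\rho\}$, and the far set $F=\{|\alpha-\alpha^*|>\rho\}$, and bound the three partial sums separately.

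On $M$, since $|\alpha_\sigma-\alpha^*|=O(n^{-2/5})$ we have $|\alpha-\alpha_\sigma|\ge\tfrac C2n^{-2/5}$ once $C$ is large, so each term is $\le\exp\big(m\Phi_\sigma(\alpha_\sigma)-\tfrac{cC^2}{4}n^{1/5}(1-o(1))+K_1n^{1/5}\big)$, where $K_1=K_1(\lambda,d)$ absorbs the errors; summing $O(n)$ terms adds only $O(\ln n)$. The near sum is at least its term at the grid point closest to $\alpha_\sigma$, which lies in $N$ for $C$ large, hence at least $\exp(m\Phi_\sigma(\alpha_\sigma)-K_2n^{1/5})$ for a fixed $K_2$; choosing $C=C(\lambda,d)$ with $\tfrac{cC^2}{4}>K_1+K_2+1$ gives $\sum_M=o(1)\sum_N$. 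On $F$, part~(1) of Proposition~\ref{prop:ratio-of-first-moments} gives $\hat\E(Z_{\tilde G,\alpha,\sigma})\le e^{O(n^{3/5})}\hat\E(Z_{\tilde G,\alpha,\sigma_0})$ (the factor $(\lambda(\tfrac{1-2\alpha}{1-\alpha})^{d-1})^{|\sigma|}$ is $\le e^{O(n^{3/5})}$, and $\le1$ near $\tfrac12$), while $\hat\E(Z_{\tilde G,\alpha,\sigma_0})\le e^{m\Phi(\alpha)+O(n^{3/5})}$ on compact subintervals of $(0,\tfrac12)$; for $\alpha\ge\tfrac12-\rho$ a linear number of factors in \eqref{eq:fraction} are bounded away from $1$, so $\hat\E(Z_{\tilde G,\alpha,\sigma_0})\le e^{-\Omega(dm)}$, which for $d$ large beats the $e^{O(n^{3/5})}$ slack; and for $\alpha$ below a suitably small constant the crude bound $\hat\E(Z_{\tilde G,\alpha,\sigma_0})\le\lambda^{\alpha m}\binom m{\alpha m}$ is already $\le e^{m(\Phi(\alpha^*)-c'')}$ for some fixed $c''>0$ (using $\Phi(\alpha^*)>0$). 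Since $\Phi(\alpha)\le\Phi(\alpha^*)-c'$ on the remaining compact part of $F$, altogether $\sum_F\le e^{m\Phi(\alpha^*)-\Omega(n)}$, whereas $\sum_N\ge e^{m\Phi(\alpha^*)-O(n^{3/5})}$, so $\sum_F=o(1)\sum_N$ as well. Hence $\hat\E(Z_{\tilde G,\sigma})=\sum_N+\sum_M+\sum_F=(1+o(1))\sum_N$, which is the assertion.

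The only delicate step is the mesoscopic regime. The corrections $\tfrac{|\sigma|}{m}h(\alpha)$ and $\tfrac\Delta mg(\alpha)$ have size $n^{-2/5}$, which is \emph{much larger} than the $n^{-4/5}$-scale curvature gap $\Phi$ alone gives just outside the $Cn^{-2/5}$-window --- equivalently, after multiplying by $m$, the unavoidable $O(n^{1/5})$ errors sit at precisely the order of the gap being exploited. The argument therefore must (i) show these corrections merely translate the Gaussian-type peak to a nearby $\alpha_\sigma=\alpha^*+O(n^{-2/5})$ rather than flattening or moving it further --- this is exactly why one must widen the window, i.e.\ take $C$ large --- and (ii) keep the exponent error at $O(n^{1/5})$ across the whole mesoscopic range, where the $O(n^{3/5})$ bounds that suffice on $F$ would be fatal, which is what forces the careful Stirling bookkeeping isolating $\Delta g(\alpha)$ and $|\sigma|h(\alpha)$ as genuine pieces of the smooth potential rather than as error. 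A secondary nuisance is that $\alpha^*$ is of order $\tfrac{\ln d}{d}$ and the expansion degenerates as $\alpha\to0$ and $\alpha\to\tfrac12$, so the two ends of the range need separate, purely first-moment, estimates.
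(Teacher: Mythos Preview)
Your argument is correct; the route differs from the paper's mainly in packaging and in one observation that makes the paper's version considerably shorter.

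The paper does not identify the shifted maximizer $\alpha_\sigma$ or split into near/mesoscopic/far. Instead it compares everything to the single point $\alpha^*$ via the telescoping ratio
\[
\frac{\hat\E(Z_{\tilde G,\alpha,\sigma})}{\hat\E(Z_{\tilde G,\alpha^*,\sigma})}
=\frac{\hat\E(Z_{\tilde G,\alpha,\sigma})}{\hat\E(Z_{\tilde G,\alpha,\sigma_0})}\cdot
\frac{\hat\E(Z_{\tilde G,\alpha,\sigma_0})}{\hat\E(Z_{\tilde G,\alpha^*,\sigma_0})}\cdot
\frac{\hat\E(Z_{\tilde G,\alpha^*,\sigma_0})}{\hat\E(Z_{\tilde G,\alpha^*,\sigma})},
\]
and bounds the middle factor by comparison with the honest $d$-regular graph $G_m$ on $m$ vertices (Lemmas~\ref{lem:tildeG-firstmom-2} and~\ref{lem:tildeG-firstmom-3}). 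The outcome is the single global inequality
\[
\frac{\hat\E(Z_{\tilde G,\alpha,\sigma})}{\hat\E(Z_{\tilde G,\alpha^*,\sigma})}
\le \exp\!\Big(O(n^{1/5})+O\big(n^{3/5}|\alpha-\alpha^*|\big)-Cn|\alpha-\alpha^*|^2\Big),
\]
valid for all $\alpha\in[0,\tfrac12]$ because $\Phi$ is strictly concave on the whole interval ($\Phi''<0$ everywhere). The linear term $n^{3/5}|\alpha-\alpha^*|$ absorbs exactly the shift you isolate as $\tfrac{|\sigma|}{m}h+\tfrac{\Delta}{m}g$, and the quadratic term dominates it once $|\alpha-\alpha^*|\gg n^{-2/5}$; a single choice of $C$ makes the exponent $\le -n^{1/5}$ outside the window, so summing $O(n)$ terms finishes in one stroke.

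What your version buys is an explicit effective potential $\Phi_\sigma$ and its maximizer, which makes the mechanism (a Gaussian peak translated by $O(n^{-2/5})$) transparent. The cost is the three-way split and, in particular, the separate endpoint arguments for $\alpha$ near $0$ and near $\tfrac12$ --- these are unnecessary once one notices that the concavity of $\Phi$ is global and that Lemma~\ref{lem:tildeG-firstmom-2} already controls the $\Delta$-correction uniformly as $O(n^{3/5}|\alpha-\alpha^*|)$, not merely $O(n^{3/5})$. Your endpoint treatment is fine but could be replaced by this single inequality.
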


\begin{corollary}\label{cor:tildeG-sigmaB-sigma0-firstmom}
For any $\sigma \in \{0,1\}^B$,
\[
\hat \E(Z_{\tilde G,\sigma}) = \exp\left( O(n^{\frac 15}) \right) \chi(\sigma_B) \hat \E(Z_{\tilde G,\sigma_0}).
\]
\end{corollary}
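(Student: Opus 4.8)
The plan is to derive Corollary~\ref{cor:tildeG-sigmaB-sigma0-firstmom} by combining Proposition~\ref{prop:tildeG-first-mom-alpha-star}, which localizes the first moment $\hat\E(Z_{\tilde G,\sigma})$ to the window of densities $\alpha$ with $|\alpha-\alpha^*|\le Cn^{-\frac{2}{5}}$, with part~\eqref{proppart:firstmom-alpha-close-to-alpha*} of Proposition~\ref{prop:ratio-of-first-moments}, which controls the ratio $\hat\E(Z_{\tilde G,\alpha,\sigma})/\hat\E(Z_{\tilde G,\alpha,\sigma_0})$ for precisely those $\alpha$. The reason for invoking the second part of Proposition~\ref{prop:ratio-of-first-moments} rather than the first is that the resulting factor $\chi(\sigma)$ is then \emph{independent of} $\alpha$, so it can be pulled through the sum over $\alpha$ without introducing an $\alpha$-dependence that would obstruct the factorization.

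Concretely, I would first apply Proposition~\ref{prop:tildeG-first-mom-alpha-star} to both $\sigma$ and the empty configuration $\sigma_0$, writing
\[
\hat\E(Z_{\tilde G,\sigma}) = (1-o(1))\sum_{\alpha:\,|\alpha-\alpha^*|\le Cn^{-\frac{2}{5}}} \hat\E(Z_{\tilde G,\alpha,\sigma}),
\qquad
\hat\E(Z_{\tilde G,\sigma_0}) = (1-o(1))\sum_{\alpha:\,|\alpha-\alpha^*|\le Cn^{-\frac{2}{5}}} \hat\E(Z_{\tilde G,\alpha,\sigma_0}).
\]
Next, for each $\alpha$ in this window I would invoke Proposition~\ref{prop:ratio-of-first-moments}\eqref{proppart:firstmom-alpha-close-to-alpha*} to replace $\hat\E(Z_{\tilde G,\alpha,\sigma})$ by $\exp(O(n^{\frac{1}{5}}))\,\chi(\sigma)\,\hat\E(Z_{\tilde G,\alpha,\sigma_0})$. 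Since the implied constant in the $O(n^{\frac{1}{5}})$ term is uniform over the window and each summand $\hat\E(Z_{\tilde G,\alpha,\sigma_0})$ is nonnegative, a termwise comparison yields $\sum_{\alpha} \hat\E(Z_{\tilde G,\alpha,\sigma}) = \exp(O(n^{\frac{1}{5}}))\,\chi(\sigma)\sum_{\alpha} \hat\E(Z_{\tilde G,\alpha,\sigma_0})$. Substituting the two displays above and absorbing the $(1-o(1))$ factors into $\exp(O(n^{\frac{1}{5}}))$ gives $\hat\E(Z_{\tilde G,\sigma}) = \exp(O(n^{\frac{1}{5}}))\,\chi(\sigma)\,\hat\E(Z_{\tilde G,\sigma_0})$, which is the claim since $\chi(\sigma)=\chi(\sigma_B)$.

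The only genuine point to check is the uniformity of the error terms over the window $|\alpha-\alpha^*|\le Cn^{-\frac{2}{5}}$: one must verify that the $\exp(O(n^{\frac{1}{5}}))$ factor produced by Proposition~\ref{prop:ratio-of-first-moments} does not conceal an $\alpha$-dependent constant that degenerates. Inspecting its proof, the error arises from the bounds $L_2\le O(n^{\frac{1}{5}})$, $N_1-N_1'\le O(n^{\frac{3}{5}})$, $m=n(1-o(1))$, together with $(1-2\alpha)/(1-\alpha)$ being bounded away from $0$ and $\infty$; all of these hold uniformly for $\alpha$ in the window because $\alpha^*\to 0$ as $d\to\infty$, so no new subtlety appears. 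A secondary observation is that the window contains only polynomially many values of $\alpha$ (roughly $Cn^{\frac{3}{5}}$, as $\alpha m$ ranges over the integers), which would in any case be harmless since a polynomial factor in $n$ is $\exp(O(\ln n))\subseteq\exp(O(n^{\frac{1}{5}}))$; the termwise comparison above sidesteps even needing this.
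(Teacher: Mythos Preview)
Your proof is correct and follows essentially the same route as the paper: localize $\hat\E(Z_{\tilde G,\sigma})$ to the window $|\alpha-\alpha^*|\le Cn^{-2/5}$ via Proposition~\ref{prop:tildeG-first-mom-alpha-star}, apply Proposition~\ref{prop:ratio-of-first-moments}\eqref{proppart:firstmom-alpha-close-to-alpha*} termwise to pull out the $\alpha$-independent factor $\chi(\sigma)$, and then invoke Proposition~\ref{prop:tildeG-first-mom-alpha-star} again for $\sigma_0$ to collapse the remaining sum. Your additional remarks on uniformity of the $O(n^{1/5})$ constant over the window and on the polynomial number of summands are sound and make the argument a bit more explicit than the paper's version.
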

\begin{proof}
The claim follows by putting together part \eqref{proppart:firstmom-alpha-close-to-alpha*} of Proposition \ref{prop:ratio-of-first-moments} and Proposition \ref{prop:tildeG-first-mom-alpha-star}.
In particular, taking $C$ to be large enough as in Proposition \ref{prop:tildeG-first-mom-alpha-star},
\begin{align*}
\hat \E(Z_{\tilde G,\sigma}) = \tilde \Theta\left(  1 \right) \sum_{\alpha: |\alpha - \alpha^*| \le  Cn^{-\frac 25}} \hat \E(Z_{\tilde G,\alpha,\sigma}) & = 
 \exp\left( O(n^{\frac 15}) \right) \chi(\sigma) \sum_{\alpha: |\alpha - \alpha^*| \le  Cn^{-\frac 25}}  \hat \E(Z_{\tilde G,\sigma_0}) \\ & = \exp\left( O(n^{\frac 15}) \right) \chi(\sigma) \hat \E(Z_{\tilde G,\sigma_0}).
\end{align*}
\end{proof}

To prove Proposition \ref{prop:tildeG-first-mom-alpha-star}, we need a few intermediate results. Let $G_m \sim \mathcal H(m,d)$ where $m = |V(\tilde G) \setminus B|$, as defined above. Define the partition functions $Z_{G_m,\alpha}$ and $Z^{(2)}_{G_m,\alpha,\gamma,\varepsilon}$ respectively as $Z_{G,\alpha}$ and $Z^{(2)}_{G,\alpha,\gamma,\varepsilon}$ were defined, with $G=G_m$.

\begin{lemma}
\label{lem:tildeG-firstmom-2}
For any $0 \le \alpha \le \frac 12$,
\begin{align*}
\frac{\hat \E(Z_{\tilde G,\alpha,\sigma_0})}{ \hat \E(Z_{\tilde G,\alpha^*,\sigma_0})} =  \frac{\E(Z_{G_m,\alpha})}{\E(Z_{G_m,\alpha^*})}
\exp \left( O(n^{\frac 35} |\alpha - \alpha^*| )\right).
\end{align*}
\end{lemma}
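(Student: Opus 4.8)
The plan is to compare the two expectations directly through their explicit product formulas, which turn out to differ only in a single parameter. Applying \eqref{eq:fraction} with $\sigma=\sigma_0$ (so that $L=L_1=L_2=0$ and $N_1=d\alpha m$) and Lemma~\ref{prop:Z-alpha-first-moment} with $n=m$, one has
\[
\hat\E\bigl(Z_{\tilde G,\alpha,\sigma_0}\bigr)=\lambda^{\alpha m}\binom{m}{\alpha m}\prod_{i=0}^{d\alpha m-1}\frac{N_T-d\alpha m-i}{N_T-1-2i},\qquad
\E\bigl(Z_{G_m,\alpha}\bigr)=\lambda^{\alpha m}\binom{m}{\alpha m}\prod_{i=0}^{d\alpha m-1}\frac{(1-\alpha)dm-i}{dm-1-2i},
\]
where $N_T=dm+R$ with $R:=(d-1)M_1+(d-2)M_2$; the second formula is just the first with $R=0$. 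Under the conditioning defining $\hat\E$ (i.e.\ in $\hat{\mathcal H}(n,d)$), Corollary~\ref{cor:Gtilde-degrees} gives $M_1=O(n^{3/5})$, $M_2=O(n^{1/5})$, hence $R=O(n^{3/5})$ and $m=n(1-o(1))$. Cancelling the common factor $\lambda^{\alpha m}\binom{m}{\alpha m}$, it remains to show that
\[
g(\alpha):=\ln\frac{\hat\E(Z_{\tilde G,\alpha,\sigma_0})}{\E(Z_{G_m,\alpha})}=\sum_{i=0}^{d\alpha m-1}\left[\ln\!\Bigl(1+\tfrac{R}{(1-\alpha)dm-i}\Bigr)-\ln\!\Bigl(1+\tfrac{R}{dm-1-2i}\Bigr)\right]
\]
satisfies $|g(\alpha)-g(\alpha^*)|=O(n^{3/5}|\alpha-\alpha^*|)$.

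The cleanest route is to show that $g$ is Lipschitz with constant $O(n^{3/5})$ on the range of $\alpha$ that matters. Only the first logarithm in each summand depends on $\alpha$, and
\[
\partial_\alpha\ln\!\Bigl(1+\tfrac{R}{(1-\alpha)dm-i}\Bigr)=\frac{dmR}{\bigl((1-\alpha)dm-i\bigr)\bigl((1-\alpha)dm-i+R\bigr)}\le\frac{dmR}{\bigl((1-\alpha)dm-i\bigr)^{2}}.
\]
Since $(1-\alpha)dm-i\ge(1-2\alpha)dm+1$ for $i\le d\alpha m-1$, summing over $i$ (substitute $j=(1-\alpha)dm-i$ and compare with $\int^{\infty}j^{-2}\,dj$) gives $O\bigl(R/(1-2\alpha)\bigr)=O(n^{3/5})$, provided $\alpha$ stays bounded away from $\tfrac12$ --- which is automatic wherever the lemma is used, since there $\alpha$ lies within $O(n^{-2/5})$ of $\alpha^*=(1+o(1))\ln(\lambda d)/d\to 0$ by \eqref{eq:alpha-star-eqn}. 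The only remaining $\alpha$-dependence is through the summation limit $d\alpha m-1$; the terms created when $\alpha$ increases by $\delta$ have indices $i\approx d\alpha m$, for which $(1-\alpha)dm-i\approx(1-2\alpha)dm\approx dm-1-2i$, so the two logarithms nearly cancel and each such term is $O\bigl(R\delta^{2}/(1-2\alpha)^{2}\bigr)$, of lower order than the $O(\delta\,n^{3/5})$ above. Integrating the Lipschitz bound from $\alpha^*$ to $\alpha$ gives the assertion. (Alternatively one can evaluate $g(\alpha)=R\bigl[\ln(1-\alpha)-\tfrac12\ln(1-2\alpha)\bigr]+O(n^{1/5})$ in closed form via \eqref{lem:product-f-comparison} and then subtract, but the Lipschitz argument avoids having to show that the additive $O(n^{1/5})$ errors cancel between $g(\alpha)$ and $g(\alpha^*)$.)

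The genuinely substantive point, rather than any one computation, is using Corollary~\ref{cor:Gtilde-degrees} correctly to guarantee $R=O(n^{3/5})$; this is exactly why the statement is phrased with $\hat\E$ rather than $\E$. The routine obstacle is the error bookkeeping: keeping track of the $O(R^{2}/(dm))=O(n^{1/5})$ second-order contributions from $\ln(1+x)=x+O(x^{2})$ (equivalently the Stirling errors in the closed-form route), and checking rigorously that the effect of the varying summation limit is lower order. One should also flag the mild caveat that near $\alpha=\tfrac12$ the quantity $R/(1-2\alpha)$ need not be $O(n^{3/5})$, so the Lipschitz bound degrades there; but that regime never arises in the applications, and in any case the partition-function mass with $\alpha$ near $\tfrac12$ is negligible by concavity of $\Phi$ (Lemma~\ref{prop:Z-alpha-first-moment}), so it can be excluded or absorbed without affecting the conclusions.
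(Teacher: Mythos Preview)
Your approach is correct and essentially the same as the paper's: both compute the ratio $\hat\E(Z_{\tilde G,\alpha,\sigma_0})/\E(Z_{G_m,\alpha})$ as a product over $i$ of terms of the form $(1+R/x_i)$ with $R=(d-1)M_1+(d-2)M_2=O(n^{3/5})$, and then bound the change in its logarithm between $\alpha$ and $\alpha^*$. The paper writes this change out directly as an explicit ratio of products and asserts the $\exp(O(n^{3/5}|\alpha-\alpha^*|))$ bound in one line ``by the bounds on $m$ and $M_i$'', whereas you package the same estimate as a Lipschitz bound on $g(\alpha)$ and give more detail on the bookkeeping (including the caveat near $\alpha=\tfrac12$, which the paper leaves implicit); there is no substantive difference.
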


\begin{proof}
Let $N_1$ and $N_T$ be defined as in \eqref{eq:fraction}. Define $N_1^*$ and $N_T^*$ analogously for $\alpha^*$. Note that for the configuration $\sigma_0$, $L_i =0$ for $i=1,2$ and $N_T = N_T^*$.
Comparing the expressions in \eqref{eq:fraction} and \eqref{eq:first-moment-product}, we have that
\begin{align*}
\frac{\hat \E(Z_{\tilde G,\alpha,\sigma_0})}{\hat \E(Z_{\tilde G,\alpha^*,\sigma_0})}  & = \frac{\E(Z_{G_m,\alpha})}{\E(Z_{G_m,\alpha^*})}  \times 
\frac{\displaystyle\prod_{i=0}^{d \alpha m -1} \frac{dm-2i-1}{dm+(d-1)M_1+(d-2)M_2-2i-1}}
{\displaystyle\prod_{i=0}^{d \alpha^* m -1} \frac{dm-2i-1}{dm+(d-1)M_1+(d-2)M_2-2i-1}}
\times\\
& \times \frac{\displaystyle\prod_{i=0}^{d \alpha m -1} \frac{(1-\alpha)dm+(d-1)M_1+(d-2)M_2-i}{(1-\alpha)dm-i}}
{\displaystyle\prod_{i=0}^{d \alpha^* m -1} \frac{(1-\alpha^*)dm+(d-1)M_1+(d-2)M_2-i}{(1-\alpha^*)dm-i}}\\
& = \frac{\E(Z_{G_m,\alpha})}{\E(Z_{G_m,\alpha^*})} \exp \left( O(n^{\frac 35} |\alpha - \alpha^*| )\right)
\end{align*}
where the last line follows by the bounds on $m$ and $M_i$ for $i = 1,2$.
\end{proof}

\begin{lemma}
\label{lem:tildeG-firstmom-3}
For $0 \le \alpha \le \frac 12$, there is a constant $C=C(\lambda,d) >0$ such that
\begin{align*}
\frac{\E(Z_{G_m,\alpha})}{\E(Z_{G_m,\alpha^*})} = \tilde \Theta \left( \exp \left( - C n|\alpha-\alpha^*|^2  \right) \right).
\end{align*}
\end{lemma}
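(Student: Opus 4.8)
The plan is to reduce the claim to an elementary convexity estimate for the first--moment rate function $\Phi$ of~\eqref{eq:Phi1}. Applying Lemma~\ref{prop:Z-alpha-first-moment} with $m$ in place of $n$ (the only role of $n$ there is as the number of vertices) gives $\E(Z_{G_m,\alpha})=\widetilde\Theta(1)\exp(m\Phi(\alpha))$ and likewise at $\alpha^*$, so
\[
\frac{\E(Z_{G_m,\alpha})}{\E(Z_{G_m,\alpha^*})}=\widetilde\Theta(1)\exp\!\bigl(m\,[\Phi(\alpha)-\Phi(\alpha^*)]\bigr).
\]
Since $m=n(1-o(1))$, the lemma follows once we produce constants $c_1,c_2>0$ depending only on $\lambda,d$ with
\[
-c_2\,|\alpha-\alpha^*|^2 \;\le\; \Phi(\alpha)-\Phi(\alpha^*)\;\le\;-c_1\,|\alpha-\alpha^*|^2
\qquad\text{for all }\alpha\in[0,\tfrac12].
\]

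Differentiating~\eqref{eq:Phi1} gives $\Phi'(\alpha)=\ln\!\bigl(\lambda\,\tfrac{1-\alpha}{\alpha}\bigl(\tfrac{1-2\alpha}{1-\alpha}\bigr)^{d}\bigr)$, which is strictly decreasing on $(0,\tfrac12)$ and vanishes exactly at the solution $\alpha^*\in(0,\tfrac12)$ of~\eqref{eq:alpha-star-eqn}; hence $\alpha^*$ is the unique maximizer of $\Phi$ on $[0,\tfrac12]$. A second differentiation gives
\[
\Phi''(\alpha)=-\frac{1}{\alpha(1-\alpha)}-\frac{d}{(1-\alpha)(1-2\alpha)}\;\le\;-d
\qquad\text{on }(0,\tfrac12),
\]
so $\Phi$ is (at least) $d$--strongly concave. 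Taylor's theorem around $\alpha^*$, where $\Phi'=0$, then yields $\Phi(\alpha)-\Phi(\alpha^*)=\tfrac12\Phi''(\xi)(\alpha-\alpha^*)^2\le-\tfrac{d}{2}(\alpha-\alpha^*)^2$, which is the upper bound with $c_1=d/2$.

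For the lower bound the only mild subtlety is that $\Phi''(\alpha)\to-\infty$ as $\alpha\to0^+$ or $\alpha\to\tfrac12^-$, so Taylor's theorem alone does not give a matching quadratic lower bound near the endpoints; I would handle this by a compactness split. Fix $\delta=\delta(\lambda,d)>0$ small enough that $[\alpha^*-\delta,\alpha^*+\delta]\subset(0,\tfrac12)$. On this interval $\Phi''$ is bounded below by some $-C_\delta>-\infty$, and Taylor gives $\Phi(\alpha)-\Phi(\alpha^*)\ge-\tfrac{C_\delta}{2}(\alpha-\alpha^*)^2$. For $\alpha\in[0,\tfrac12]$ with $|\alpha-\alpha^*|>\delta$, continuity of $\Phi$ on the compact interval $[0,\tfrac12]$ gives $\Phi(\alpha)-\Phi(\alpha^*)\ge-2\sup_{[0,1/2]}|\Phi|\ge-\tfrac{2}{\delta^2}\bigl(\sup_{[0,1/2]}|\Phi|\bigr)(\alpha-\alpha^*)^2$. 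Taking $c_2=\max\{C_\delta/2,\,2\delta^{-2}\sup_{[0,1/2]}|\Phi|\}$ gives the lower bound on all of $[0,\tfrac12]$. Plugging this two--sided bound back into the displayed ratio and absorbing $m=\Theta(n)$ together with the $\widetilde\Theta(1)$ prefactor into the $\widetilde\Theta$ completes the argument; beyond this endpoint bookkeeping I do not anticipate any real obstacle.
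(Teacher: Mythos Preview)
Your proof is correct and follows essentially the same route as the paper: both reduce the ratio to $\widetilde\Theta(1)\exp(m[\Phi(\alpha)-\Phi(\alpha^*)])$ via Lemma~\ref{prop:Z-alpha-first-moment}, compute the same second derivative $\Phi''(\alpha)=-\frac{1}{\alpha(1-\alpha)}-\frac{d}{(1-\alpha)(1-2\alpha)}$, and use $\Phi'(\alpha^*)=0$ together with Taylor's theorem. The paper's version simply writes the local Taylor expansion $\Phi(\alpha)-\Phi(\alpha^*)=\tfrac12\Phi''(\alpha^*)|\alpha-\alpha^*|^2+o(|\alpha-\alpha^*|^2)$ and stops there; you go a step further by observing $\Phi''\le -d$ globally on $(0,\tfrac12)$ to get the upper bound uniformly, and by a compactness split to obtain a matching lower bound away from $\alpha^*$. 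That extra care is in fact an improvement over the paper's more informal treatment, but the approach is the same.
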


\begin{proof}Recall the expression  \eqref{eq:Phi1} for $\Phi(\alpha)$.
Writing the Taylor expansion for $\Phi(\alpha)$ around $\alpha^*$ and noting that $\Phi'(\alpha^*) = 0$, we have
\begin{align*}
\Phi(\alpha) - \Phi(\alpha^*) & =  \frac{\partial \Phi(\alpha^*)}{\partial \alpha} |\alpha-\alpha^*| + \frac 12 \frac{\partial^2 \Phi(\alpha^*)}{\partial \alpha^2} |\alpha-\alpha^*|^2 + o\left(|\alpha-\alpha^*|^2 \right) \\
& = - \left( \frac{1}{\alpha^*(1-\alpha^*)} + \frac{d}{(1-\alpha^*)(1-2\alpha^*)}\right) |\alpha-\alpha^*|^2 + o\left(|\alpha-\alpha^*|^2 \right)
\end{align*}
and therefore 
\begin{align*}
\frac{\E(Z_{G_m,\alpha})}{\E(Z_{G_m,\alpha^*})}  = \tilde \Theta \left(  \exp \left( m\left( \Phi(\alpha) -\Phi(\alpha^*) \right) \right) \right) = \tilde \Theta \left( \exp \left( - C n|\alpha-\alpha^*|^2  \right) \right).
\end{align*}

\end{proof}

\begin{proof}[Proof of Proposition \ref{prop:tildeG-first-mom-alpha-star}]

Combining part \eqref{proppart:general-alpha} of Proposition \ref{prop:ratio-of-first-moments}, Lemma \ref{lem:tildeG-firstmom-2} and Lemma \ref{lem:tildeG-firstmom-3}, we have that for any $0 \le \alpha \le \frac 12$,
\begin{align*}
\frac{\hat \E(Z_{\tilde G,\alpha,\sigma})}{\hat \E(Z_{\tilde G,\alpha^*,\sigma_B})} & =
\frac{\hat \E(Z_{\tilde G,\alpha,\sigma})}{\hat \E(Z_{\tilde G,\alpha,\sigma_0})}
 \cdot
\frac{\hat \E(Z_{\tilde G,\alpha,\sigma_0})}{\hat \E(Z_{\tilde G,\alpha^*,\sigma_0})} \cdot
 \frac{\hat \E(Z_{\tilde G,\alpha^*,\sigma_0})}{\hat \E(Z_{\tilde G,\alpha^*,\sigma_B})} \\
 & = \exp \left( O\left(n^{\frac 15}- C n|\alpha-\alpha^*|^2 + n^{\frac 35} |\alpha - \alpha^*|\right)\right) 
\end{align*}
When $|\alpha-\alpha^*| > C n^{-\frac 25}$, for a large enough constant $C = C(\lambda,d)$,  the second term in the parenthesis above dominates and right hand side can be made arbitrarily small. Therefore
\begin{align*}
\hat \E(Z_{\tilde G,\sigma}) & =  \sum_{\alpha: |\alpha - \alpha^*| > Cn^{-2/5}} \hat \E(Z_{\tilde G,\alpha,\sigma}) + \sum_{\alpha: |\alpha - \alpha^*| \le  Cn^{-2/5}} \hat \E(Z_{\tilde G,\alpha,\sigma})\\
& = (1-o(1)) \sum_{\alpha: |\alpha - \alpha^*| \le  Cn^{-2/5}} \hat \E(Z_{\tilde G,\alpha,\sigma}).
\end{align*}
\end{proof}

\subsection{The Second Moment of the Partition Function of $\tilde G$}
As before, we divide the second moment $\E((Z_{\tilde G,\alpha,\sigma})^2)$ into the contribution from pairs of independent sets $S$ and $T$ of $\tilde G$ whose restriction to $B$ is $\sigma$,  $\sum_{v \in V \setminus B}S_v = \sum_{v \in V \setminus B}T_v = \alpha m $ and $|(S \cap T)\setminus B| = \gamma m$. We can further divide according the number $\varepsilon d m$ of half-edges which are paired from each of $S$ and $T$ to $V(\tilde G) \setminus (S \cup T )$. Denote this contribution by $Z^{(2)}_{\tilde G, \alpha, \gamma,\varepsilon, \sigma}$. Thus, we can write
\begin{align*}
& \E\left((Z_{\tilde G, \alpha, \sigma})^2\right)  =
\displaystyle\sum_{\gamma,\varepsilon} \E\left(Z^{(2)}_{\tilde G, \alpha, \gamma,\varepsilon, \sigma}\right).
\end{align*}
As before, let $L$ denote $|\sigma|$ with $L_i$ denoting the numbers of vertices of $B$ in the independent set of degrees $d-i$ for $i=1,2$. Define $M_i$ as before and let $K_i = M_i-L_i$. Calculating the probability in the configuration model that a pair of subsets $S$ and $T$ as above are independent sets we obtain
\begin{align}
& \hat \E\left(Z^{(2)}_{\tilde G, \alpha, \gamma,\varepsilon, \sigma}\right)  = \lambda^{2 \alpha m +
  2(L_1+L_2)}{m \choose  \alpha m }
{\alpha m \choose \gamma m}{(1-\alpha)m \choose (\alpha-\gamma)m} \times
\nonumber \\
\times & \left(
\frac{\displaystyle\prod_{i=0}^{\gamma m d + (d-1)L_1 +(d-2)L_2-1}
  (1-2\alpha+\gamma)dm +(d-1)K_1+(d-2)K_2 - i}{\displaystyle\prod_{i=0}^{\gamma m d
    + (d-1)L_1 +(d-2)L_2-1} dm  + (d-1)M_1+(d-2)M_2 -1 - 2i}\right) \times
\nonumber \\
\times & \left(
\frac{\displaystyle\prod_{i=0}^{\varepsilon d
    m -1}  (1-2\alpha)dm+
  (d-1)K_1+(d-2)K_2 - i \cdot \displaystyle\prod_{i=0}^{(\alpha-\gamma-\varepsilon)d 
    m -1}  (\alpha-\gamma)dm -
  i}{\displaystyle\prod_{i=0}^{(\alpha-\gamma)d m -1}  (1-2\gamma)dm +
(d-1)K_1+(d-2)K_2 -1 -2i} \times \right. \nonumber \\
\times  & \left. \frac{\displaystyle\prod_{i=0}^{\varepsilon d m -1}
  (1-2\alpha-\varepsilon)dm + (d-1)K_1+(d-2)K_2 - i
}{\displaystyle\prod_{i=0}^{\varepsilon d m -1} (1-2\alpha)dm + (d-1)K_1+(d-2)K_2 -1 - 2i}
\right). \label{eq:secondmoment-tilde-G}
\end{align}

We will show below that the second moment $\hat \E\left((Z_{G, \sigma})^2\right)$ is roughly the square of the first moment $\hat \E\left(Z_{G, \sigma}\right)$ by an analysis similar to that in \cite[Theorem 6.11]{MosWeiWor09} and
\cite[Lemma 3.5]{Sly10}.

\begin{proposition}\label{prop:ratio-second-moments}Let $(\alpha,\gamma,\varepsilon) \in \mathcal R$. Then, for any $\sigma \in \{0,1\}^B$,
\[
\hat \E\left(Z^{(2)}_{\tilde G, \alpha, \gamma,\varepsilon, \sigma}\right) \le \exp \left( O\left( n^{\frac 15} \right)\right) \hat \E\left(Z^{(2)}_{\tilde G, \alpha^*, \gamma^*,\varepsilon^*, \sigma}\right).
\]
\end{proposition}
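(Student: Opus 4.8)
The plan is to fix $\sigma \in \{0,1\}^B$ once and for all and compare the explicit product formula \eqref{eq:secondmoment-tilde-G} for $\hat \E(Z^{(2)}_{\tilde G, \alpha,\gamma,\varepsilon,\sigma})$ at a general triple $(\alpha,\gamma,\varepsilon) \in \mathcal R$ with the same formula at $(\alpha^*,\gamma^*,\varepsilon^*)$. The key point is that the dependence on the boundary data enters only through the corrections $(d-1)L_1 + (d-2)L_2$, $(d-1)K_1+(d-2)K_2$, $(d-1)M_1+(d-2)M_2$ inside the various products, and by Corollary \ref{cor:Gtilde-degrees} all of $L_1, L_2, K_1, K_2, M_1, M_2$ (hence these linear combinations) are $O(n^{3/5})$ with $M_2, L_2, K_2 = O(n^{1/5})$, while $m = n(1-o(1))$. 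Since each product in \eqref{eq:secondmoment-tilde-G} ranges over $O(n)$ factors, each of which is perturbed multiplicatively by $1 + O(n^{3/5}/n) = 1 + O(n^{-2/5})$, the entire ratio of \eqref{eq:secondmoment-tilde-G} to the ``clean'' expression \eqref{eq:second-moment-product} for $G_m \sim \mathcal H(m,d)$ (appropriately weighted by $\lambda^{2(L_1+L_2)}$, which is $\exp(O(n^{3/5} \ln \lambda))$ — this needs care, see below) is $\exp(O(n^{3/5}) \cdot O(n^{-2/5}))$...

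Actually the correct bookkeeping is: write
\begin{align*}
\frac{\hat \E(Z^{(2)}_{\tilde G, \alpha,\gamma,\varepsilon,\sigma})}{\hat \E(Z^{(2)}_{\tilde G, \alpha^*,\gamma^*,\varepsilon^*,\sigma})} = \frac{\hat \E(Z^{(2)}_{\tilde G, \alpha,\gamma,\varepsilon,\sigma})}{\E(Z^{(2)}_{G_m,\alpha,\gamma,\varepsilon})} \cdot \frac{\E(Z^{(2)}_{G_m,\alpha,\gamma,\varepsilon})}{\E(Z^{(2)}_{G_m,\alpha^*,\gamma^*,\varepsilon^*})} \cdot \frac{\E(Z^{(2)}_{G_m,\alpha^*,\gamma^*,\varepsilon^*})}{\hat \E(Z^{(2)}_{\tilde G, \alpha^*,\gamma^*,\varepsilon^*,\sigma})}.
\end{align*}
The first and third factors are handled by the perturbation argument above: comparing \eqref{eq:secondmoment-tilde-G} with $\lambda^{2(L_1+L_2)}$ times the $G_m$-formula, the shifts in the summation limits ($\gamma m d \to \gamma m d + (d-1)L_1 + (d-2)L_2$, etc.) contribute $O(n^{3/5})$ extra or missing factors each of size $\exp(O(\ln n))$, giving $\exp(O(n^{3/5} \ln n))$ — but this is too weak for the claimed $\exp(O(n^{1/5}))$, so I must instead exploit that the shifted factors are each within a bounded ratio of the base and that the number of such factors is $O(n^{3/5})$ with each logarithm bounded, while the multiplicative perturbation of the remaining $O(n)$ factors is $1+O(n^{-2/5})$; I expect the honest bound is $\exp(O(n^{1/5}))$ only after observing $N_1 - N_1' = O(n^{3/5})$ \emph{and} that the degree-$(d-2)$ vertices number only $O(n^{1/5})$, exactly as in the proof of Proposition \ref{prop:ratio-of-first-moments}. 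The middle factor $\E(Z^{(2)}_{G_m,\alpha,\gamma,\varepsilon})/\E(Z^{(2)}_{G_m,\alpha^*,\gamma^*,\varepsilon^*})$ is $\le \tilde\Theta(1)$ directly from Proposition \ref{prop:f-approximates-Z-square} together with Proposition \ref{prop:f-global-max}: the former gives $\E(Z^{(2)}_{G_m,\alpha,\gamma,\varepsilon}) = \exp(m f(\alpha,\gamma,\varepsilon) + O(\ln n))$ and the latter gives $f(\alpha^*,\gamma^*,\varepsilon^*) \ge f(\alpha,\gamma,\varepsilon)$ on all of $\mathcal R$, so the ratio is at most $\exp(O(\ln n)) = \tilde\Theta(1) \le \exp(O(n^{1/5}))$.

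So the steps in order are: (i) invoke Corollary \ref{cor:Gtilde-degrees} to fix the conditioning on $\tilde G \in \hat{\mathcal H}(n,d)$ and record $M_i, L_i, K_i$ bounds; (ii) factor the target ratio through the clean $G_m$ second moments as displayed; (iii) bound the two ``boundary-correction'' factors by $\exp(O(n^{1/5}))$ by a term-by-term comparison of \eqref{eq:secondmoment-tilde-G} with \eqref{eq:second-moment-product}, mirroring the computation in the proof of Proposition \ref{prop:ratio-of-first-moments} (the $\lambda^{2(L_1+L_2)}$ prefactor combining with the $N_1-N_1' = O(n^{3/5})$ shifted factors, each contributing a bounded multiplicative amount, and the $O(n)$ unshifted factors each perturbed by $1+O(n^{-2/5})$, with the degree-$(d-2)$ count being only $O(n^{1/5})$); (iv) bound the middle factor by $\tilde\Theta(1)$ using Propositions \ref{prop:f-approximates-Z-square} and \ref{prop:f-global-max}; (v) multiply. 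The main obstacle is step (iii): one has to be careful that the shifted product limits do not introduce a $\exp(\Theta(n^{3/5}\ln n))$ error but genuinely only $\exp(O(n^{1/5}))$, which forces one to use both the $O(n^{3/5})$ bound on the total shift \emph{and} the fact that the near-degenerate $(d-2)$-degree vertices are only $O(n^{1/5})$ in number, exactly the mechanism already exploited in Proposition \ref{prop:ratio-of-first-moments}; the integral/Stirling asymptotics for $f$ play no role here beyond what is packaged in Proposition \ref{prop:f-approximates-Z-square}.
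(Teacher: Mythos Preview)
Your decomposition through the clean graph $G_m$ is a reasonable alternative to the paper's route through the empty boundary configuration $\sigma_0$, but step (iii) contains a real gap. The claim that each of the two boundary-correction factors
\[
\frac{\hat \E\bigl(Z^{(2)}_{\tilde G, \alpha,\gamma,\varepsilon,\sigma}\bigr)}{\E\bigl(Z^{(2)}_{G_m,\alpha,\gamma,\varepsilon}\bigr)}
\qquad\text{and}\qquad
\frac{\E\bigl(Z^{(2)}_{G_m,\alpha^*,\gamma^*,\varepsilon^*}\bigr)}{\hat \E\bigl(Z^{(2)}_{\tilde G, \alpha^*,\gamma^*,\varepsilon^*,\sigma}\bigr)}
\]
is $\exp(O(n^{1/5}))$ is false. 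The mechanism you invoke from Proposition~\ref{prop:ratio-of-first-moments} does \emph{not} show that the first-moment analogue of such a ratio is $\exp(O(n^{1/5}))$; it shows that the ratio equals a main term $\chi(\sigma)$ times $\exp(O(n^{1/5}))$, and $\chi(\sigma)=\exp(\Theta(n^{3/5}))$ when $|\sigma|=\Theta(n^{3/5})$. The same phenomenon occurs here: each boundary-correction factor carries a main term of size $\exp(\Theta(n^{3/5}))$ (see Lemma~\ref{lem:tildeG-secmom-vs-firstmom-at*}, which computes exactly your third factor at the star point for $\sigma_0$ and finds $\bigl((1-\alpha^*)^2/(1-2\alpha^*)\bigr)^{(d-1)K_1+(d-2)K_2}=\exp(\Theta(n^{3/5}))$). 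These main terms do largely cancel between the first and third factors, but only up to a residual of order
\[
\exp\Bigl(O\bigl(n^{3/5}(|\alpha-\alpha^*|+|\gamma-\gamma^*|+|\varepsilon-\varepsilon^*|)+n^{1/5}\bigr)\Bigr),
\]
which can be as large as $\exp(\Theta(n^{3/5}))$ when $(\alpha,\gamma,\varepsilon)$ is a constant distance from $(\alpha^*,\gamma^*,\varepsilon^*)$.

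Consequently step (iv), which keeps only the crude bound $\E(Z^{(2)}_{G_m,\alpha,\gamma,\varepsilon})/\E(Z^{(2)}_{G_m,\alpha^*,\gamma^*,\varepsilon^*})\le\tilde\Theta(1)$, is not enough. You must retain the full quadratic decay from Proposition~\ref{prop:f-global-max}: the middle factor is at most $\exp\bigl(-Cn(|\alpha-\alpha^*|^2+|\gamma-\gamma^*|^2+|\varepsilon-\varepsilon^*|^2)+O(\ln n)\bigr)$, and this quadratic term must be played off against the linear $n^{3/5}|\cdot-\cdot^*|$ residual from the outer factors. That is precisely what the paper does (via Lemmas~\ref{lem:approximation-to-*} and~\ref{lem:sigma-plus-sigma-B}, which route through $\sigma_0$ rather than $G_m$ but arrive at the same combined inequality~\eqref{eq:sigmaB-star-vs-nostar}); the proof concludes by observing that $O(n^{3/5})\,x - Cn\,x^2 \le O(n^{1/5})$ uniformly over $x\ge 0$. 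Your outline can be repaired along these lines, but as written it discards exactly the piece of Proposition~\ref{prop:f-global-max} that makes the argument work.
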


To prove Proposition \ref{prop:ratio-second-moments}, we need a few intermediate lemmas.

\begin{lemma}\label{lem:approximation-to-*}Let $(\alpha,\gamma,\varepsilon) \in \mathcal R$.  Then, for some $C = C(\lambda,d)>0$
\begin{align*}
\frac{\hat \E(Z^{(2)}_{\tilde G, \alpha,\gamma,\varepsilon,\sigma_0})}{\hat \E(Z^{(2)}_{\tilde G, \alpha^*,\gamma^*,\varepsilon^*,\sigma_0})}\leq \exp\left(-C n\left( |\alpha-\alpha^*|^2+ |\gamma-\gamma^*|^2 +|\epsilon-\epsilon^*|^2\right)+ O \left(n^{\frac{3}{5}}\left( |\alpha-\alpha^*|+ |\gamma-\gamma^*|+ |\varepsilon-\varepsilon^*|\right) \right) \right).
\end{align*}
\end{lemma}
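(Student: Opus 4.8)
The plan is to factor the ratio through a clean random regular graph on the same vertex set. Writing $G_m\sim\mathcal H(m,d)$ with $m=|V(\tilde G)\setminus B|=(1-o(1))n$, I would split
\[
\frac{\hat \E(Z^{(2)}_{\tilde G, \alpha,\gamma,\varepsilon,\sigma_0})}{\hat \E(Z^{(2)}_{\tilde G, \alpha^*,\gamma^*,\varepsilon^*,\sigma_0})}
=\frac{\E(Z^{(2)}_{G_m,\alpha,\gamma,\varepsilon})}{\E(Z^{(2)}_{G_m,\alpha^*,\gamma^*,\varepsilon^*})}\;\cdot\;
\frac{\hat \E(Z^{(2)}_{\tilde G, \alpha,\gamma,\varepsilon,\sigma_0})\big/\E(Z^{(2)}_{G_m,\alpha,\gamma,\varepsilon})}{\hat \E(Z^{(2)}_{\tilde G, \alpha^*,\gamma^*,\varepsilon^*,\sigma_0})\big/\E(Z^{(2)}_{G_m,\alpha^*,\gamma^*,\varepsilon^*})},
\]
and call the two factors (I) and (II). Factor (I) is the ``clean'' random $d$-regular contribution and will supply the Gaussian term $-Cn(|\alpha-\alpha^*|^2+|\gamma-\gamma^*|^2+|\varepsilon-\varepsilon^*|^2)$; factor (II) records the multiplicative distortion caused by the degree defects of $\tilde G$ at the two points and will supply the $O(n^{3/5}(\cdots))$ correction.

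For (I): since $\alpha^*,\gamma^*,\varepsilon^*$ depend only on $\lambda$ and $d$, Proposition~\ref{prop:f-approximates-Z-square} (applied with $m$ in place of $n$) gives $\E(Z^{(2)}_{G_m,\alpha,\gamma,\varepsilon})=\exp(mf(\alpha,\gamma,\varepsilon)+O(\ln n))$, and then Proposition~\ref{prop:f-global-max} together with $m=(1-o(1))n$ yields $(\mathrm{I})\le\widetilde\Theta(1)\exp(-Cn(|\alpha-\alpha^*|^2+|\gamma-\gamma^*|^2+|\varepsilon-\varepsilon^*|^2))$ for a constant $C=C(\lambda,d)>0$, the polynomial prefactor being absorbed after decreasing $C$ slightly. (If $\E(Z^{(2)}_{G_m,\alpha,\gamma,\varepsilon})=0$ the lemma is trivial, so I may assume we are on the region where this is positive.)

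For (II): specialising \eqref{eq:secondmoment-tilde-G} to $\sigma=\sigma_0$ makes $L_1=L_2=0$ and $K_i=M_i$, so $\hat \E(Z^{(2)}_{\tilde G,\alpha,\gamma,\varepsilon,\sigma_0})$ has exactly the same $\lambda^{2\alpha m}$ and the same binomial prefactors $\binom{m}{\alpha m}\binom{\alpha m}{\gamma m}\binom{(1-\alpha)m}{(\alpha-\gamma)m}$ as $\E(Z^{(2)}_{G_m,\alpha,\gamma,\varepsilon})$ from \eqref{eq:second-moment-product}, and each factor of each finite product differs only by an additive shift drawn from $\{\,(d-1)M_1+(d-2)M_2,\,0\,\}=\{D,0\}$, where $D=O(n^{3/5})$ by Corollary~\ref{cor:Gtilde-degrees}. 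Hence $\log\big[\hat \E(Z^{(2)}_{\tilde G,\alpha,\gamma,\varepsilon,\sigma_0})/\E(Z^{(2)}_{G_m,\alpha,\gamma,\varepsilon})\big]=g(\alpha,\gamma,\varepsilon)$, where $g$ is a sum of terms $\pm\sum_{i=0}^{N-1}\ln\big(1+D/(a\,dm-i)\big)$ with $a,N$ affine in $(\alpha,\gamma,\varepsilon)$ and each $a\,dm-i$ a positive integer. I would then estimate $g(\alpha,\gamma,\varepsilon)-g(\alpha^*,\gamma^*,\varepsilon^*)$ by a dichotomy. On a fixed neighbourhood $U$ of $(\alpha^*,\gamma^*,\varepsilon^*)$ — which is interior to $\mathcal R$ because $\alpha^*=O(\ln d/d)<1/2$, so every denominator $a\,dm-i$ appearing in $g$ stays $\ge\delta n$ for a constant $\delta>0$ — differentiating the explicit product representation and using $D=O(n^{3/5})$ shows $\nabla g=O(n^{3/5})$ uniformly on $U$, whence $|g(\alpha,\gamma,\varepsilon)-g(\alpha^*,\gamma^*,\varepsilon^*)|=O\big(n^{3/5}(|\alpha-\alpha^*|+|\gamma-\gamma^*|+|\varepsilon-\varepsilon^*|)\big)$, exactly the claimed error. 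Off $U$ one has $|\alpha-\alpha^*|+|\gamma-\gamma^*|+|\varepsilon-\varepsilon^*|\ge c_0$ for a constant $c_0>0$; there I would use only the crude bound $|g|=O(n^{3/5}\ln n)$ (valid on all of $\mathcal R$ since every product factor is $\ge1$) together with the bound $(\mathrm{I})\le\exp(-Cnc_0^2)$ from the previous paragraph, which makes the full product $(\mathrm{I})\cdot(\mathrm{II})$ dominated by the right-hand side of the lemma after once more shrinking $C$. Combining the two regimes gives the statement.

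The main obstacle is in factor (II): obtaining the \emph{linear} gain $|\alpha-\alpha^*|+|\gamma-\gamma^*|+|\varepsilon-\varepsilon^*|$ rather than a uniform $O(n^{3/5}\ln n)$, and in particular controlling $g$ near $\partial\mathcal R$ where some product factors degenerate. This is precisely why the argument is organised as a Lipschitz estimate for $g$ on a fixed interior neighbourhood of the maximiser, with the far region absorbed by the strict concavity of $f$ supplied by Proposition~\ref{prop:f-global-max}.
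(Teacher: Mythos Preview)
Your proposal is correct and follows essentially the same route as the paper: factor through $G_m\sim\mathcal H(m,d)$, use Proposition~\ref{prop:f-approximates-Z-square} together with Proposition~\ref{prop:f-global-max} on factor~(I) to obtain the quadratic decay, and bound factor~(II) by exploiting that the only discrepancy between \eqref{eq:secondmoment-tilde-G} with $\sigma=\sigma_0$ and \eqref{eq:second-moment-product} is the additive shift $D=(d-1)M_1+(d-2)M_2=O(n^{3/5})$ in each product factor. The paper carries this out by writing the correction explicitly as a telescoping product of ratios and reading off the $O(n^{3/5}(|\alpha-\alpha^*|+|\gamma-\gamma^*|+|\varepsilon-\varepsilon^*|))$ bound directly; your Lipschitz estimate for $g$ on an interior neighbourhood, combined with the crude $|g|=O(n^{3/5}\ln n)$ bound absorbed by the quadratic term away from the maximiser, is a slightly more cautious packaging of the same computation and in fact makes explicit the boundary issue that the paper's one-line justification leaves implicit.
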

\begin{proof}For the configuration $\sigma_0$, for $i=1,2$, $L_i=0$ and $K_i=M_i$. 
Comparing the expressions  \eqref{eq:second-moment-product} and \eqref{eq:secondmoment-tilde-G}, we obtain that for $G_m \sim \mathcal H(m,d)$,

\begin{align*}
&  \frac{\hat \E(Z^{(2)}_{\tilde G, \alpha,\gamma,\varepsilon,\sigma_0})}{\hat \E(Z^{(2)}_{\tilde G, \alpha^*,\gamma^*,\varepsilon^*,\sigma_0})}  \\ & = \frac{\E(Z^{(2)}_{G_m, \alpha ,\gamma ,\varepsilon})}{\E(Z^{(2)}_{G_m, \alpha^* ,\gamma^* ,\varepsilon^* })} \times
\frac{\displaystyle\prod_{i=0}^{\gamma m d-1} \frac{(1-2\alpha+\gamma)dm+(d-1)M_1+(d-2)M_2-i}{(1-2\alpha+\gamma)dm-i}}
{\displaystyle\prod_{i=0}^{\gamma^* m d-1} \frac{(1-2\alpha^*+\gamma^*)dm+(d-1)M_1+(d-2)M_2-i}{(1-2\alpha^*+\gamma^*)dm-i}}\times\\
& \times \frac{\displaystyle\prod_{i=0}^{\gamma^* m d-1} \frac{dm+(d-1)M_1+(d-2)M_2-1-2i}{dm-1-2i}} {\displaystyle\prod_{i=0}^{\gamma m d-1} \frac{dm+(d-1)M_1+(d-2)M_2-1-2i}{dm-1-2i}}
  \times
\frac{\displaystyle\prod_{i=0}^{\varepsilon m d-1} \frac{(1-2\alpha)dm+(d-1)M_1+(d-2)M_2-i}{(1-2\alpha+\gamma)dm-i}}
{\displaystyle\prod_{i=0}^{\varepsilon^* m d-1} \frac{(1-2\alpha^*)dm+(d-1)M_1+(d-2)M_2-i}{(1-2\alpha^*)dm-i}}\times\\
& \times \frac{\displaystyle\prod_{i=0}^{(\alpha^*-\gamma^*) m d -1} \frac{(1-2\gamma^*)dm+(d-1)M_1+(d-2)M_2-1-2i}{(1-2\gamma^*)dm-1-2i}} {\displaystyle\prod_{i=0}^{(\alpha-\gamma) m d-1} \frac{(1-2\gamma)dm+(d-1)M_1+(d-2)M_2-1-2i}{(1-2\gamma)dm-1-2i}}
  \times \\
& \times  \frac{\displaystyle\prod_{i=0}^{\varepsilon m d -1} \frac{(1-2\alpha-\varepsilon)dm+(d-1)M_1+(d-2)M_2-i}{(1-2\alpha-\varepsilon)dm-i}}
{\displaystyle\prod_{i=0}^{\varepsilon^* m d -1} \frac{(1-2\alpha^*-\varepsilon^*)dm+(d-1)M_1+(d-2)M_2-i}{(1-2\alpha^*-\varepsilon^*)dm-i}}\times \\
& \times \frac {\displaystyle\prod_{i=0}^{\varepsilon^* m d-1} \frac{(1-2\alpha^*)dm+(d-1)M_1+(d-2)M_2-1-2i}{(1-2\alpha^*)dm-1-2i}}
{\displaystyle\prod_{i=0}^{\varepsilon m d-1} \frac{(1-2\alpha)dm+(d-1)M_1+(d-2)M_2-1-2i}{(1-2\alpha)dm-1-2i}}  \\
& = \exp\left( m\left(f(\alpha,\gamma,\varepsilon) - f(\alpha^*,\gamma^*,\varepsilon^*) \right) + O(\ln(m)) \right) \exp\left( O\left( n^{\frac 35 } \left(|\alpha-\alpha^*| +|\gamma-\gamma^*|+|\varepsilon - \varepsilon^*| \right)\right)\right)
\end{align*}
The final equality follows by Proposition \ref{prop:f-approximates-Z-square} and the bounds on the sizes of $m,M_1$ and $M_2$ by the assumed conditioning.
By Proposition~\ref{prop:f-global-max}, we have that for some constant $C$, possibly depending on $d$ and $\lambda$,
\[
f(\alpha ,\gamma ,\varepsilon ) - f(\alpha^*,\gamma^*,\varepsilon^*) \le -C \left( |\alpha - \alpha^*|^2 + |\gamma - \gamma^*|^2+ |\varepsilon - \varepsilon^*|^2  \right).
\]
Therefore, we obtain that
\begin{align*}
\frac{\hat \E(Z^{(2)}_{\tilde G, \alpha,\gamma,\varepsilon,\sigma_0})}{\hat \E(Z^{(2)}_{\tilde G, \alpha^*,\gamma^*,\varepsilon^*,\sigma_0})}
\le \exp\left(-C n\left( |\alpha-\alpha^*|^2+ |\gamma-\gamma^*|^2 +|\epsilon-\epsilon^*|^2\right)+ O \left(n^{\frac{3}{5}}\left( |\alpha-\alpha^*|+ |\gamma-\gamma^*| +|\varepsilon - \varepsilon^*|\right) \right) \right).
\end{align*}
\end{proof}

\begin{lemma}\label{lem:sigma-plus-sigma-B}
Let $(\alpha,\gamma,\varepsilon) \in \mathcal R$. For any $\sigma \in \{0,1\}^B$,
\begin{align*}
\frac{\hat \E(Z^{(2)}_{\tilde G, \alpha,\gamma,\varepsilon,\sigma})}{\hat \E(Z^{(2)}_{\tilde G, \alpha,\gamma,\varepsilon,\sigma_0})} = 
\left( \chi(\sigma)\right)^2\exp \left( O \left(|\alpha-\alpha^*|+ |\gamma-\gamma^*|+ |\varepsilon- \varepsilon^*|) n^{\frac{3}{5}}+ n^{\frac{1}{5}} \right) \right).
\end{align*}

\end{lemma}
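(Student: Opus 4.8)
The plan is to compare the two second-moment expressions directly from the product formula \eqref{eq:secondmoment-tilde-G}, exactly as was done for the first moment in Proposition~\ref{prop:ratio-of-first-moments}. The point is that the configuration $\sigma$ and the empty configuration $\sigma_0$ affect \eqref{eq:secondmoment-tilde-G} only through the exponents $L_1,L_2$ of $\lambda$, through the upper limits of the products in the ``$\gamma$-block'' (each product over $i$ runs to $\gamma m d + (d-1)L_1 + (d-2)L_2 - 1$ rather than $\gamma m d - 1$), and through the substitution $K_i = M_i - L_i$ in place of $K_i = M_i$. So the ratio $\hat\E(Z^{(2)}_{\tilde G,\alpha,\gamma,\varepsilon,\sigma}) / \hat\E(Z^{(2)}_{\tilde G,\alpha,\gamma,\varepsilon,\sigma_0})$ telescopes: the binomial factors ${m\choose \alpha m}{\alpha m \choose \gamma m}{(1-\alpha)m\choose(\alpha-\gamma)m}$ cancel, and what remains is $\lambda^{2(L_1+L_2)}$ times a ratio of the tails of the $\gamma$-block products (the extra $(d-1)L_1 + (d-2)L_2$ factors in numerator and denominator) times correction factors coming from replacing $M_i$ by $K_i = M_i - L_i$ in the ``$\varepsilon$'' and ``$\alpha-\gamma$'' blocks.

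First I would isolate the leading contribution. The tail of the numerator of the $\gamma$-block contributes $\prod$ of $(d-1)L_1 + (d-2)L_2$ consecutive terms each of the form $(1-2\alpha+\gamma)dm + (d-1)K_1 + (d-2)K_2 - i = (1-2\alpha+\gamma)dm(1 + O(n^{3/5}/(dm)))$, and similarly the denominator tail gives $dm(1+O(n^{3/5}/(dm)))$ per factor; their ratio is $\left(\frac{1-2\alpha+\gamma}{1}\right)^{(d-1)L_1+(d-2)L_2}$ up to a multiplicative $\exp(O((N_1 - N_1')n^{3/5}/(dm)))$ with $N_1 - N_1' = (d-1)L_1 + (d-2)L_2 = O(n^{3/5})$ and $m = n(1-o(1))$, hence $\exp(O(n^{1/5}))$. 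Since at the maximum $\gamma^* = (\alpha^*)^2$ and $\varepsilon^* = \alpha^*(1-2\alpha^*)$ one checks $1 - 2\alpha^* + \gamma^* = (1-\alpha^*)^2$, so this factor is $\left((1-\alpha^*)^2\right)^{(d-1)L_1+(d-2)L_2} (1+o(1))$ when $|\alpha-\alpha^*|,|\gamma-\gamma^*|$ are $o(1)$; combined with $\lambda^{2(L_1+L_2)}$ and $L_2 = O(n^{1/5})$ absorbed into the error, this is $\left(\lambda \left(\frac{1-2\alpha^*}{1-\alpha^*}\right)^{d-1}\right)^{2|\sigma|} = (\chi(\sigma))^2$ up to $\exp(O(n^{1/5}))$ — here I must be a little careful, since $1-2\alpha^*+\gamma^* = (1-\alpha^*)^2$ gives the factor $(1-\alpha^*)^{2((d-1)L_1+(d-2)L_2)}$ and I need the numerator of $\chi$, namely $(1-2\alpha^*)^{d-1}$, which enters because the relevant consecutive-integer \emph{range} for the independent-set half-edges is governed by the $(1-2\alpha)$-type denominators elsewhere; the bookkeeping should be done by mirroring the three displayed lines in the proof of Proposition~\ref{prop:ratio-of-first-moments} rather than guessed.

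Then I would bound the remaining correction factors: the $\varepsilon$-block and $(\alpha-\gamma)$-block ratios between using $K_i = M_i$ and $K_i = M_i - L_i$ each change every factor by a relative $O(L_i/(dm)) = O(n^{3/5}/n) $ over $O(dm)$ factors, but more precisely the \emph{number} of affected factors where the shift is not already absorbed is $O(dm \cdot \max(|\alpha-\alpha^*|,\dots))$ plus $O(n^{3/5})$, giving the stated $\exp(O((|\alpha-\alpha^*| + |\gamma-\gamma^*| + |\varepsilon-\varepsilon^*|)n^{3/5} + n^{1/5}))$. The main obstacle is purely organizational: keeping track of which products have shifted upper limits versus shifted summands versus shifted coefficients, and verifying that the algebraic identity $1 - 2\alpha^* + \gamma^* = (1-\alpha^*)^2$ (together with the analogous identities for the $\varepsilon^*$-blocks) is exactly what makes the leading factor collapse to $(\chi(\sigma))^2$ rather than to some other power. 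No new probabilistic input is needed — everything follows from \eqref{eq:secondmoment-tilde-G}, the bounds $M_1 = O(n^{3/5})$, $M_2 = O(n^{1/5})$, $|\sigma| \le |B| = O(n^{3/5})$, $L_2 = O(n^{1/5})$, $m = n(1-o(1))$ from Corollary~\ref{cor:Gtilde-degrees}, and Stirling via \eqref{lem:product-f-comparison}.
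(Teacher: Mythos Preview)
Your overall strategy --- taking the ratio directly from \eqref{eq:secondmoment-tilde-G} and letting the combinatorial prefactors cancel --- is exactly the paper's approach. But there is a genuine gap in the execution: you treat the replacement $K_i = M_i \to K_i = M_i - L_i$ in the $\varepsilon$- and $(\alpha-\gamma)$-blocks as a correction absorbable into the error term, when in fact these shifts contribute at \emph{leading order} and are precisely what makes the answer collapse to $(\chi(\sigma))^2$.

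Concretely, take for instance the factor $\prod_{i=0}^{\varepsilon d m - 1}\big((1-2\alpha)dm + (d-1)K_1 + (d-2)K_2 - i\big)$. Its ratio between $K_i = M_i - L_i$ and $K_i = M_i$ telescopes to approximately $\left(\tfrac{1-2\alpha-\varepsilon}{1-2\alpha}\right)^{(d-1)L_1 + (d-2)L_2}$, which is $\exp(\Theta(n^{3/5}))$ --- the same order as $\chi(\sigma)^2$ itself, not a lower-order remainder. Carrying this out for each of the products in \eqref{eq:secondmoment-tilde-G} (including both the upper-limit extension \emph{and} the $K_i$-shift inside the $\gamma$-block) gives, after cancellation, the combined base
\[
\left(\frac{(1-2\alpha)(1-2\alpha-2\varepsilon)^{1/2}}{1-2\alpha+\gamma}\right)^{(d-1)L_1 + (d-2)L_2},
\]
and only then does substituting $\gamma^* = (\alpha^*)^2$, $\varepsilon^* = \alpha^*(1-2\alpha^*)$ produce $\left(\tfrac{1-2\alpha^*}{1-\alpha^*}\right)^{2((d-1)L_1+(d-2)L_2)}$, i.e.\ $(\chi(\sigma))^2$ up to the $L_2 = O(n^{1/5})$ correction. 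Your $\gamma$-block-only computation gives the wrong base --- you notice this yourself (``here I must be a little careful'') --- and it cannot be repaired by bounding the other blocks crudely, because those blocks supply exactly the missing $(1-2\alpha)$- and $(1-2\alpha-2\varepsilon)$-powers. The $\exp\big(O((|\alpha-\alpha^*| + |\gamma-\gamma^*| + |\varepsilon-\varepsilon^*|)n^{3/5})\big)$ error arises only \emph{after} all leading factors have been extracted, when one replaces $(\alpha,\gamma,\varepsilon)$ by $(\alpha^*,\gamma^*,\varepsilon^*)$ in that base.
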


\begin{proof}
 Using the expression \eqref{eq:secondmoment-tilde-G} for each of $\E(Z^{(2)}_{\tilde G, \alpha,\gamma,\varepsilon,\sigma_0})$ and $\E(Z^{(2)}_{\tilde G, \alpha,\gamma,\varepsilon,\sigma})$ and taking ratios of the numerators and denominators separately for each of the products, we have
\begin{align*}
& \frac{\hat \E(Z^{(2)}_{\tilde G, \alpha,\gamma,\varepsilon,\sigma})}{\hat \E(Z^{(2)}_{\tilde G, \alpha,\gamma,\varepsilon,\sigma_0})} = \lambda^{2|\sigma|}\exp \left( O(n^{\frac 15})\right) \times \\
  &   \times  \left( \frac{(1-2\alpha)^2}{(1-2\alpha+\gamma)(1-2\gamma)^{1/2}} \frac{(1-2\alpha - \varepsilon)}{(1-2\alpha)}
   \frac{(1-2\gamma)^{1/2}}{(1-2\alpha)^{1/2}}
   \frac{(1-2\alpha-2\varepsilon)}{(1-2\alpha-\varepsilon)}
   \frac{(1-2\alpha)^{1/2}}{(1-2\alpha-2\varepsilon)^{1/2}}
    \right)^{(d-1)L_1 + (d-2)L_2}  \\
& =   \lambda^{2|\sigma|}  \exp \left( O(n^{\frac 15})\right) \cdot \left(\frac{(1-2\alpha)(1-2\alpha-2\varepsilon)^{1/2}}{(1-2\alpha+\gamma)} \right)^{(d-1)L_1 + (d-2)L_2}\\
& = \lambda^{2|\sigma|}  \exp \left( O \left(n^{\frac 15}  +  n^{\frac{3}{5}} \left( |\alpha-\alpha^*|-|\gamma- \gamma^*| + |\varepsilon-\varepsilon^*| \right) \right)  \right) \left(\frac{(1-2\alpha^*)(1-2\alpha^*- 2\varepsilon^*)^{1/2}}{(1-2\alpha^*+ \gamma^*)} \right)^{(d-1)L_1 + (d-2)L_2}
\end{align*}
Since $\gamma^* = (\alpha^*)^2$, $\varepsilon ^*= \alpha^*(1-2\alpha^*)$ and  $ |\sigma| =L_1+L_2$, the last line gives that
\begin{align*}
& \frac{\hat \E(Z^{(2)}_{\tilde G, \alpha,\gamma,\varepsilon,\sigma})}{\hat \E(Z^{(2)}_{\tilde G, \alpha,\gamma,\varepsilon,\sigma_0})} = \left(\lambda\left(\frac{1-2\alpha^*}{1-\alpha^*}\right)^{d-1}\right)^{2|\sigma|} \exp \left( O \left( |\alpha-\alpha^*|+|\gamma-\gamma^*|+ |\varepsilon- \varepsilon^*|) n^{\frac{3}{5}}+ n^{\frac{1}{5}} \right) \right)
\end{align*}
and the lemma follows.
\end{proof}

Putting these results together we now prove Proposition \ref{prop:ratio-second-moments}.

\begin{proof}[Proof of Proposition \ref{prop:ratio-second-moments}]
Let $\sigma \in \{0,1\}^B$ and write
\begin{align*}
\frac{\hat \E\left(Z^{(2)}_{\tilde G, \alpha, \gamma,\varepsilon, \sigma}\right)}{\hat \E\left(Z^{(2)}_{\tilde G, \alpha^*, \gamma^*,\varepsilon^*, \sigma}\right)} =
\frac{\hat \E\left(Z^{(2)}_{\tilde G, \alpha, \gamma,\varepsilon, \sigma}\right)}{\hat \E\left(Z^{(2)}_{\tilde G, \alpha, \gamma,\varepsilon, \sigma_0}\right)}
\frac{\hat \E\left(Z^{(2)}_{\tilde G, \alpha, \gamma,\varepsilon, \sigma_0}\right)}{\hat \E\left(Z^{(2)}_{\tilde G, \alpha^*, \gamma^*,\varepsilon^*, \sigma_0}\right)}
\frac{\hat \E\left(Z^{(2)}_{\tilde G, \alpha^*, \gamma^*,\varepsilon^*, \sigma_0}\right)}{\hat \E\left(Z^{(2)}_{\tilde G, \alpha^*, \gamma^*,\varepsilon^*, \sigma}\right)}.
\end{align*}
Applying Lemmas \ref{lem:approximation-to-*} and \ref{lem:sigma-plus-sigma-B} to the terms in the product above, we have that 
\begin{align}
&\frac{\hat \E\left(Z^{(2)}_{\tilde G, \alpha, \gamma,\varepsilon, \sigma}\right) }{\hat \E\left(Z^{(2)}_{\tilde G, \alpha^*, \gamma^*,\varepsilon^*, \sigma}\right)}
\le  \nonumber \\ 
& \le \exp\left(O\left(n^{\frac 35}\left(|\alpha-\alpha^*|+|\gamma-\gamma^*|+|\varepsilon-\varepsilon^*|\right)+n^{\frac 15}\right) - Cn\left( |\alpha-\alpha^*|^2+|\gamma-\gamma^*|^2+|\varepsilon-\varepsilon^*|^2 \right)\right) . \label{eq:sigmaB-star-vs-nostar}
\end{align}

For a constant $C$, define the set
\[
\mathcal{R}_C = \left\{ (\alpha,\gamma,\varepsilon) \in \mathcal R \text{ \ s.t. \ } |\alpha-\alpha^*|, |\gamma-\gamma^*|,|\varepsilon-\varepsilon^*| \le C n^{-\frac{2}{5}} \right\}.
\]
Note that for some sufficiently large $C$, if $(\alpha,\gamma,\varepsilon) \in \mathcal R_{C}$, the the right-hand side of \eqref{eq:sigmaB-star-vs-nostar} can be bounded by $\exp\left(O(n^{\frac 15})\right)$. On the other hand, if 
$(\alpha,\gamma,\varepsilon) \not\in \mathcal R_{C}$ for any constant $C$, then the right-hand side of \eqref{eq:sigmaB-star-vs-nostar} can be made arbitraily small and therefore, for any $(\alpha,\gamma,\varepsilon) \in \mathcal R$ and $\sigma$
\[
\hat \E\left(Z^{(2)}_{\tilde G, \alpha, \gamma,\varepsilon, \sigma}\right) \le  \exp\left(O(n^{\frac 15})\right) \hat \E\left(Z^{(2)}_{\tilde G, \alpha^*, \gamma^*,\varepsilon^*, \sigma}\right).
\]
\end{proof}

\begin{proposition}\label{prop:tildeG-sigmaB-sigma0-secmom}
For any $\sigma \in \{0,1\}^B$,
\[
\hat \E\left(Z_{\tilde G,\sigma}^2 \right) \le \exp\left( O(n^{\frac 15})\right) \left(\chi(\sigma) \right)^2 \hat \E\left(Z_{\tilde G,\sigma_0}^2 \right).
\]
\end{proposition}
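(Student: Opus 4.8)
The plan is to deduce this from Proposition~\ref{prop:ratio-second-moments} and Lemma~\ref{lem:sigma-plus-sigma-B}, where the real work has already been done; the remaining argument uses only nonnegativity of the partition functions and the fact that there are only polynomially many overlap parameters. First I would reduce the squared partition function to its diagonal pieces. Writing $Z_{\tilde G,\sigma} = \sum_\alpha Z_{\tilde G,\alpha,\sigma}$ with at most $m+1 = O(n)$ values of $\alpha$, and bounding the cross terms by the elementary inequality $xy \le \tfrac12(x^2 + y^2)$ applied pointwise to the nonnegative quantities $Z_{\tilde G,\alpha,\sigma}$, one gets $Z_{\tilde G,\sigma}^2 \le (m+1)\sum_\alpha Z_{\tilde G,\alpha,\sigma}^2$, while by definition $Z_{\tilde G,\alpha,\sigma}^2 = \sum_{\gamma,\varepsilon} Z^{(2)}_{\tilde G,\alpha,\gamma,\varepsilon,\sigma}$. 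Taking $\hat\E$ and applying Proposition~\ref{prop:ratio-second-moments} to each of the $O(n^3)$ terms, the polynomial prefactor is absorbed into $\exp(O(n^{1/5}))$, giving
\[
\hat\E\left(Z_{\tilde G,\sigma}^2\right) \le \exp\left(O(n^{1/5})\right)\,\hat\E\left(Z^{(2)}_{\tilde G,\alpha^*,\gamma^*,\varepsilon^*,\sigma}\right).
\]

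Next I would invoke Lemma~\ref{lem:sigma-plus-sigma-B} at the triple $(\alpha^*,\gamma^*,\varepsilon^*)$, which lies in $\mathcal R$ since $\alpha^*-\gamma^*-\varepsilon^* = (\alpha^*)^2 \ge 0$ and $1-2\alpha^*-2\varepsilon^* = (1-2\alpha^*)^2 \ge 0$. At this point the error contributions $|\alpha-\alpha^*|$, $|\gamma-\gamma^*|$, $|\varepsilon-\varepsilon^*|$ appearing in that lemma all vanish, so
\[
\hat\E\left(Z^{(2)}_{\tilde G,\alpha^*,\gamma^*,\varepsilon^*,\sigma}\right) = \left(\chi(\sigma)\right)^2 \exp\left(O(n^{1/5})\right)\hat\E\left(Z^{(2)}_{\tilde G,\alpha^*,\gamma^*,\varepsilon^*,\sigma_0}\right).
\]
Finally, again by nonnegativity, $\hat\E(Z^{(2)}_{\tilde G,\alpha^*,\gamma^*,\varepsilon^*,\sigma_0}) \le \hat\E(Z_{\tilde G,\alpha^*,\sigma_0}^2) \le \hat\E(Z_{\tilde G,\sigma_0}^2)$, the last inequality because $Z_{\tilde G,\sigma_0} \ge Z_{\tilde G,\alpha^*,\sigma_0} \ge 0$. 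Chaining the three displays yields the claim.

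There is no serious obstacle, since Proposition~\ref{prop:ratio-second-moments} and Lemma~\ref{lem:sigma-plus-sigma-B} already carry the content. The only points needing a little care are the passage from the full double sum defining $Z_{\tilde G,\sigma}^2$ (which includes $\alpha\neq\alpha'$ cross terms, for which no $Z^{(2)}$-decomposition was introduced) down to the diagonal squares $Z_{\tilde G,\alpha,\sigma}^2$, handled by the $xy\le(x^2+y^2)/2$ bound at the cost of a harmless $O(n)$ factor; and the fact that $(\alpha^*,\gamma^*,\varepsilon^*)$ may have to be replaced by the nearest admissible lattice point (so that $\alpha^*m,\gamma^*m,\varepsilon^*m$ are integers), which perturbs $\chi$ and the relevant moments by only $\exp(O(\log n))$. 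In both cases the slack $\exp(O(n^{1/5}))$ dwarfs the loss.
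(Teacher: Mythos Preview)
Your proposal is correct and follows essentially the same route as the paper: reduce the square to diagonal terms via $xy\le\tfrac12(x^2+y^2)$ (the paper phrases this as Cauchy--Schwarz), apply Proposition~\ref{prop:ratio-second-moments} to each $\hat\E(Z^{(2)}_{\tilde G,\alpha,\gamma,\varepsilon,\sigma})$, invoke Lemma~\ref{lem:sigma-plus-sigma-B} at $(\alpha^*,\gamma^*,\varepsilon^*)$ to extract $\chi(\sigma)^2$, and then use nonnegativity to pass back up to $\hat\E(Z_{\tilde G,\sigma_0}^2)$. Your final step $\hat\E(Z^{(2)}_{\tilde G,\alpha^*,\gamma^*,\varepsilon^*,\sigma_0})\le \hat\E(Z_{\tilde G,\sigma_0}^2)$ is slightly more direct than the paper's (which reinflates to the full $(\alpha,\gamma,\varepsilon)$ sum first), but the content is identical.
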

\begin{proof}Applying the Cauchy-Schwarz inequality, Proposition \ref{prop:ratio-second-moments} and Lemma \ref{lem:sigma-plus-sigma-B}, we have
\begin{align*}
\hat \E\left(Z_{\tilde G,\sigma}^2 \right) & = \hat \E \left( \left( \sum_\alpha Z_{\tilde G,\alpha,\sigma}\right)^2\right) = \tilde \Theta(1) \sum_\alpha \hat \E\left(Z_{\tilde G,\alpha,\sigma}^2 \right) = 
\tilde \Theta(1) \sum_{\alpha,\gamma,\varepsilon} \hat \E\left(Z_{\tilde G,\alpha,\gamma,\varepsilon,\sigma}^{(2)}  \right) \le \\ 
& \le \exp \left( O(n^{\frac 15})\right) \hat \E\left(Z_{\tilde G,\alpha^*,\gamma^*,\varepsilon^*,\sigma}^{(2)} \right) =  \exp \left( O(n^{\frac 15})\right) \left(\chi(\sigma) \right)^2 \hat \E\left(Z_{\tilde G,\alpha^*,\gamma^*,\varepsilon^*,\sigma_0}^{(2)} \right) \le \\
& \le \exp \left( O(n^{\frac 15})\right) \left(\chi(\sigma) \right)^2 \sum_{\alpha,\gamma,\varepsilon} \hat \E\left(Z_{\tilde G,\alpha,\gamma,\varepsilon,\sigma_0}^{(2)} \right) = \exp \left( O(n^{\frac 15})\right) \left(\chi(\sigma) \right)^2 \sum_{\alpha}\hat  \E\left(Z_{\tilde G,\alpha,\sigma_0}^{2} \right) \le \\
& \le \exp \left( O(n^{\frac 15})\right) \left(\chi(\sigma) \right)^2  \hat \E\left( \left( \sum_{\alpha} Z_{\tilde G,\alpha,\sigma_0}\right)^{2} \right) = 
\exp\left( O(n^{\frac 15})\right) \left(\chi(\sigma) \right)^2 \hat \E\left(Z_{\tilde G,\sigma_0}^2 \right).
\end{align*}
\end{proof}

The next step is to show a bound on the second moment of $Z_{\tilde G,\sigma_0}$ by the square of the first moment, and we begin with the following intermediate result.
\begin{lemma}
\label{lem:tildeG-secmom-vs-firstmom-at*}
\begin{align*}
\hat \E\left(Z^{(2)}_{\tilde G,\alpha^*,\gamma^*,\varepsilon^*,\sigma_0}\right) = \exp\left( O(n^{\frac 15})\right) \left(\hat \E(Z_{\tilde G,\alpha^*,\sigma_0})\right)^2.
\end{align*}

\end{lemma}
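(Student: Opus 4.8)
\subsubsection*{Proof proposal for Lemma~\ref{lem:tildeG-secmom-vs-firstmom-at*}}

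The plan is to compare the explicit product formula \eqref{eq:secondmoment-tilde-G} for $\hat\E(Z^{(2)}_{\tilde G,\alpha^*,\gamma^*,\varepsilon^*,\sigma_0})$ with the square of the formula \eqref{eq:fraction} for $\hat\E(Z_{\tilde G,\alpha^*,\sigma_0})$, and to show that at the ``product overlap'' $(\alpha^*,\gamma^*,\varepsilon^*)$ the former is essentially the square of the latter. First I would factor
\[
\frac{\hat\E(Z^{(2)}_{\tilde G,\alpha^*,\gamma^*,\varepsilon^*,\sigma_0})}{\big(\hat\E(Z_{\tilde G,\alpha^*,\sigma_0})\big)^2}
= \underbrace{\frac{\hat\E(Z^{(2)}_{\tilde G,\alpha^*,\gamma^*,\varepsilon^*,\sigma_0})}{\E(Z^{(2)}_{G_m,\alpha^*,\gamma^*,\varepsilon^*})}}_{(\mathrm{I})}\cdot
\underbrace{\frac{\E(Z^{(2)}_{G_m,\alpha^*,\gamma^*,\varepsilon^*})}{\big(\E(Z_{G_m,\alpha^*})\big)^2}}_{(\mathrm{II})}\cdot
\underbrace{\left(\frac{\E(Z_{G_m,\alpha^*})}{\hat\E(Z_{\tilde G,\alpha^*,\sigma_0})}\right)^2}_{(\mathrm{III})},
\]
where $G_m\sim\mathcal H(m,d)$ is the un-punctured regular graph on $m=|V(\tilde G)\setminus B|$ vertices, and bound the three factors separately. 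The point will be that $(\mathrm I)$ and $(\mathrm{III})$ each carry a factor of the form $\exp(cA+O(n^{1/5}))$, with $A:=(d-1)M_1+(d-2)M_2=O(n^{3/5})$ the correction to the half-edge count coming from the modified degrees of $B$ (using $M_1=O(n^{3/5})$, $M_2=O(n^{1/5})$ and $m=n(1-o(1))$ from Corollary~\ref{cor:Gtilde-degrees}), and that these $O(n^{3/5})$-scale contributions cancel.

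For the middle factor $(\mathrm{II})$ I would invoke Proposition~\ref{prop:f-approximates-Z-square} and Lemma~\ref{prop:Z-alpha-first-moment}, applied with $n$ replaced by $m=n(1-o(1))$, which give $\E(Z^{(2)}_{G_m,\alpha^*,\gamma^*,\varepsilon^*})=\exp(mf(\alpha^*,\gamma^*,\varepsilon^*)+O(\ln n))$ and $\E(Z_{G_m,\alpha^*})=\widetilde \Theta(1)\exp(m\Phi(\alpha^*))$, together with the algebraic identity $f(\alpha^*,\gamma^*,\varepsilon^*)=2\Phi(\alpha^*)$. This identity is exactly what makes the ordinary second moment method succeed and underlies Proposition~\ref{prop:second-moment-square-of-first}; it is verified by substituting $\gamma^*=(\alpha^*)^2$ and $\varepsilon^*=\alpha^*(1-2\alpha^*)$ directly into the definition of $f$. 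Hence $(\mathrm{II})=\exp(O(\ln n))$.

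For $(\mathrm{I})$ and $(\mathrm{III})$ I would set $L_i=0$, $K_i=M_i$ in \eqref{eq:secondmoment-tilde-G} and \eqref{eq:fraction} (since $\sigma_0$ is the empty configuration), so that both ratios differ from $1$ only through the replacement of each factor $x-i$, $x-1-2i$ in \eqref{eq:first-moment-product}--\eqref{eq:second-moment-product} by $x+A-i$, $x+A-1-2i$. Taking logarithms, using $\sum_{i=0}^{k-1}\tfrac1{x-i}=\ln\tfrac{x}{x-k}+O(1/n)$ and $\ln(1+A/x)=A/x+O(A^2/x^2)$, and noting that there are $O(n)$ factors while $A^2/n=O(n^{1/5})$, one finds $(\mathrm{III})=\exp(-2c_2A+O(n^{1/5}))$ with $c_2=\ln(1-\alpha^*)-\tfrac12\ln(1-2\alpha^*)$, and, after the several products in \eqref{eq:secondmoment-tilde-G} telescope, $(\mathrm{I})=\exp(c_1A+O(n^{1/5}))$ with $c_1=\ln(1-2\alpha^*+\gamma^*)-\tfrac12\ln(1-2\alpha^*-2\varepsilon^*)$. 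The decisive observation is that at the product overlap $1-2\alpha^*+\gamma^*=(1-\alpha^*)^2$ and $1-2\alpha^*-2\varepsilon^*=(1-2\alpha^*)^2$, so $c_1=2c_2$ and therefore $(\mathrm I)\cdot(\mathrm{III})=\exp(O(n^{1/5}))$. Combining this with $(\mathrm{II})=\exp(O(\ln n))$ yields the claim.

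I expect the main obstacle to be the bookkeeping in the telescoping computation of $c_1$: the formula \eqref{eq:secondmoment-tilde-G} contains roughly half a dozen products over ranges of lengths $\gamma^*dm$, $\varepsilon^*dm$, $(\alpha^*-\gamma^*)dm$ and $(\alpha^*-\gamma^*-\varepsilon^*)dm$, and one must group the $A$-carrying products correctly and check that the resulting logarithmic sum collapses exactly to $\ln(1-2\alpha^*+\gamma^*)-\tfrac12\ln(1-2\alpha^*-2\varepsilon^*)$. Once this is done the cancellation $c_1=2c_2$ is forced by the two ``squaring'' identities above, which are themselves immediate from $\gamma^*=(\alpha^*)^2$ and $\varepsilon^*=\alpha^*(1-2\alpha^*)$.
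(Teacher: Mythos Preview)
Your proposal is correct and follows essentially the same route as the paper's proof: the paper uses exactly the three-way factorization $(\mathrm{I})\cdot(\mathrm{II})\cdot(\mathrm{III})$ you describe, invokes Proposition~\ref{prop:f-approximates-Z-square}, Lemma~\ref{prop:Z-alpha-first-moment} and the identity $f(\alpha^*,\gamma^*,\varepsilon^*)=2\Phi(\alpha^*)$ (stated as Lemma~\ref{lem:second-moment-square-of-first}) to handle $(\mathrm{II})$, and then carries out the product comparison for $(\mathrm{I})$ and $(\mathrm{III})$ explicitly, arriving at $(\mathrm{I})=\exp(O(n^{1/5}))\big((1-\alpha^*)^2/(1-2\alpha^*)\big)^{A}$ and $(\mathrm{III})^{-1/2}=\exp(O(n^{1/5}))\big((1-\alpha^*)/(1-2\alpha^*)^{1/2}\big)^{A}$, which is precisely your cancellation $c_1=2c_2$.
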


\begin{proof}
As before, we note that for the configuration $\sigma_0$, for $i=1,2$, $L_i-0$ and $K_i = M_i$. Comparing the expressions \eqref{eq:second-moment-product} and \eqref{eq:secondmoment-tilde-G} we obtain that

\begin{align}
\nonumber & \frac{\hat \E\left(Z^{(2)}_{\tilde G,\alpha^*,\gamma^*,\varepsilon^*,\sigma_0}\right)}{ \E\left( Z^{(2)}_{G_m,\alpha^*,\gamma^*,\varepsilon^*}\right)} = \exp\left( O(n^{\frac 15})\right) \times \\  
\nonumber &  \times \left(\frac{(1-2\alpha^*+\gamma^*)(1-2\gamma^*)^{\frac 12}}{(1-2\alpha^*)}
\frac{(1-2\alpha^*)(1-2\alpha^*)^{\frac 12}}{(1-2\alpha^*-\varepsilon^*)(1-2\gamma^*)^{\frac 12}}
\frac{(1-2\alpha^*)(1-2\alpha^*-\varepsilon^*)^{\frac12}}{(1-2\alpha^*-2\varepsilon^*)(1-2\alpha^*)^{\frac 12}}
 \right)^{(d-1)K_1 + (d-2)K_2 } \\
 & =  \exp\left( O(n^{\frac 15})\right) \left(\frac{(1-\alpha^*)^2}{1-2\alpha^*}\right)^{(d-1)K_1 + (d-2)K_2} \label{eq:tildeG-Gm-secmom}
\end{align}
where the last equality follows by canceling terms and using the fact that $\gamma^* = (\alpha^*)^2$ and $\varepsilon^*  = \alpha^*(1-2\alpha^*)$.
Similarly, comparing \eqref{eq:first-moment-product} and \eqref{eq:fraction}, we obtain that
\begin{align}
\frac{\hat \E(Z_{\tilde G,\alpha^*,\sigma_0})}{\E(Z_{G_m,\alpha^*})}  = \exp\left( O(n^{\frac 15})\right) \left(\frac{1-\alpha^*}{(1-2\alpha^*)^{\frac 12}}\right)^{(d-1)K_1 + (d-2)K_2} . \label{eq:tildeG-Gm-firstmom}
\end{align}

Combining Proposition \ref{prop:f-approximates-Z-square}, Lemma \ref{prop:Z-alpha-first-moment} and Lemma \ref{lem:second-moment-square-of-first}, we have that 
\begin{align}
\frac{\hat \E\left( Z^{(2)}_{G_m,\alpha^*,\gamma^*,\varepsilon^*} \right)}{\left( \hat \E(Z_{G_m,\alpha^*}) \right)^2} = \exp\left( O(n^{\frac 15})\right) \frac{\exp\left(m f(\alpha^*,\gamma^*,\varepsilon^*) + O(\ln m) \right) }{\exp\left(2n\Phi(\alpha^*) \right)} = \exp\left( O(n^{\frac 15})\right). \label{eq:Gm-secmom-firstmom}
\end{align}
Finally, putting together \eqref{eq:tildeG-Gm-secmom}, \eqref{eq:tildeG-Gm-firstmom} and \eqref{eq:Gm-secmom-firstmom} proves the lemma.

\end{proof}

\begin{proposition}\label{prop:tildeG--sigma0-secmom-bound}
\begin{align*}
\hat \E\left( Z_{\tilde G,\sigma_0}^2\right) \le \exp \left( O(n^{\frac 15})\right) \left(\hat \E\left( Z_{\tilde G,\sigma_0}\right) \right)^2
\end{align*}
\end{proposition}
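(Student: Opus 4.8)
The plan is to run the same chain of inequalities used in the proof of Proposition~\ref{prop:tildeG-sigmaB-sigma0-secmom}, but with $\sigma=\sigma_0$ and stopping at the optimal triple $(\alpha^*,\gamma^*,\varepsilon^*)$, at which point Lemma~\ref{lem:tildeG-secmom-vs-firstmom-at*} is available to close the argument. First I would decompose $Z_{\tilde G,\sigma_0}=\sum_\alpha Z_{\tilde G,\alpha,\sigma_0}$ and, for each $\alpha$, $Z_{\tilde G,\alpha,\sigma_0}^2=\sum_{\gamma,\varepsilon}Z^{(2)}_{\tilde G,\alpha,\gamma,\varepsilon,\sigma_0}$. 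Since the number of admissible triples $(\alpha,\gamma,\varepsilon)$ is polynomial in $n$, Cauchy--Schwarz applied to the sum over $\alpha$, combined with the exact decomposition over $(\gamma,\varepsilon)$ and the nonnegativity of the summands (which controls the reverse inequality up to constants), gives
\[
\hat\E\!\left(Z_{\tilde G,\sigma_0}^2\right)=\tilde\Theta(1)\sum_{\alpha}\hat\E\!\left(Z_{\tilde G,\alpha,\sigma_0}^2\right)=\tilde\Theta(1)\sum_{\alpha,\gamma,\varepsilon}\hat\E\!\left(Z^{(2)}_{\tilde G,\alpha,\gamma,\varepsilon,\sigma_0}\right).
\]

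Next, I would invoke Proposition~\ref{prop:ratio-second-moments} with $\sigma=\sigma_0$ to bound each summand by $\exp(O(n^{1/5}))\,\hat\E(Z^{(2)}_{\tilde G,\alpha^*,\gamma^*,\varepsilon^*,\sigma_0})$; since there are only polynomially many summands, this absorbs into the error term and yields
\[
\hat\E\!\left(Z_{\tilde G,\sigma_0}^2\right)\le\exp\!\left(O(n^{1/5})\right)\hat\E\!\left(Z^{(2)}_{\tilde G,\alpha^*,\gamma^*,\varepsilon^*,\sigma_0}\right).
\]
Lemma~\ref{lem:tildeG-secmom-vs-firstmom-at*} then rewrites the right-hand side as $\exp(O(n^{1/5}))\,\bigl(\hat\E(Z_{\tilde G,\alpha^*,\sigma_0})\bigr)^2$, and finally, because $Z_{\tilde G,\sigma_0}=\sum_\alpha Z_{\tilde G,\alpha,\sigma_0}\ge Z_{\tilde G,\alpha^*,\sigma_0}\ge0$ pointwise, I can replace $\hat\E(Z_{\tilde G,\alpha^*,\sigma_0})$ by the larger quantity $\hat\E(Z_{\tilde G,\sigma_0})$, which is exactly the assertion of the proposition.

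The substantive input has already been supplied by the earlier results: the quadratic decay of $f$ about its maximum (Proposition~\ref{prop:f-global-max}), which drives Proposition~\ref{prop:ratio-second-moments}, and the cancellation of the degree-corrected boundary factors at $(\alpha^*,\gamma^*,\varepsilon^*)$ in Lemma~\ref{lem:tildeG-secmom-vs-firstmom-at*}. Consequently there is no real obstacle in the present proof; the only care needed is the bookkeeping that every sum over $\alpha$ or over $(\gamma,\varepsilon)$ costs at most a $\tilde\Theta(1)$ factor (hence is swallowed by $\exp(O(n^{1/5}))$), and that passing from the single optimal triple back to the full partition function in the last step loses only such a factor.
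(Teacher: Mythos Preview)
Your proposal is correct and follows essentially the same chain of inequalities as the paper: Cauchy--Schwarz on the sum over $\alpha$, the decomposition into $Z^{(2)}_{\tilde G,\alpha,\gamma,\varepsilon,\sigma_0}$, then Proposition~\ref{prop:ratio-second-moments} and Lemma~\ref{lem:tildeG-secmom-vs-firstmom-at*}, and finally the trivial bound $\hat\E(Z_{\tilde G,\alpha^*,\sigma_0})\le\hat\E(Z_{\tilde G,\sigma_0})$. The only cosmetic difference is that the paper writes the Cauchy--Schwarz step as an inequality rather than a $\tilde\Theta(1)$ equality, but only the upper bound is needed anyway.
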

\begin{proof}Applying the Cauchy-Schwartz inequality, we have
\begin{align*}
\hat \E\left( Z_{\tilde G,\sigma_0}^2\right) & = \hat \E\left( \left(\sum_\alpha Z_{\tilde G,\alpha, \sigma_0}\right)^2\right) \le \tilde \Theta(1) \sum_\alpha \hat \E\left(Z^2_{\tilde G,\alpha,\sigma_0} \right) = \tilde \Theta(1) \sum_{\alpha,\gamma,\varepsilon} \hat \E\left(Z^{(2)}_{\tilde G,\alpha,\gamma,\varepsilon,\sigma_0} \right) \le \\
& \le \exp \left( O(n^{\frac 15})\right) \hat \E\left( Z^{(2)}_{\tilde G,\alpha^*,\gamma^*,\varepsilon^*,\sigma_0} \right) \le 
  \exp \left( O(n^{\frac 15})\right)  \left(\hat \E\left(Z_{\tilde G,\alpha^*,\sigma_0} \right)\right)^2 \le \\
  & \le \exp \left( O(n^{\frac 15})\right)  \left(\hat \E\left(Z_{\tilde G,\sigma_0} \right)\right)^2 
\end{align*}
where the second inequality is by Proposition \ref{prop:ratio-second-moments} and the third inequality is by Lemma \ref{lem:tildeG-secmom-vs-firstmom-at*}.
\end{proof}
Define $Z_{G,\sigma}$
 to be the partition function over independent sets of $G$ whose restriction to $B$ is $\sigma$.
Extending this, define $Z_{G,\alpha,\sigma}$
 to be the partition function over independent sets of $G$ whose restriction to $B$ is $\sigma$ and for which $\alpha$ fraction of the vertices in $V(\tilde G)\setminus B$ are in the independent set. That is,

\[
Z_{G, \alpha, \sigma} \defeq \sum_{\omega \in I(G) : \omega(B) = \sigma, \sum_{v \in V(\tilde G) \setminus B}\omega_v = \alpha m} \lambda^{|\omega|}.
\]

\begin{lemma}\label{lem:markov-G-tilde}
For any $\sigma \in \{0,1\}^{B}$ and $0 \le \alpha < \frac 12$, the partition functions for $G$
  and $\tilde{G}$ can be related by
\[
Z_{G,\sigma} = \kappa(\sigma) Z_{\tilde G,\sigma}\] and
\[Z_{G,\alpha,\sigma} = \kappa(\sigma) Z_{\tilde G,\alpha,\sigma} \] 
\end{lemma}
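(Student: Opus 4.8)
The plan is to exploit the fact that the graph $G$ is obtained from $\tilde G$ by attaching, to each center $s_i \in S'$, its radius-$r$ neighborhood ball $B_r(s_i)$ minus its boundary $\partial B_r(s_i) = W_i$ (together with the finitely many ``exceptional'' vertices accounted for in $W_{k+1}$), and that each such attached piece interacts with the rest of $G$ only through the vertices of $B$. Concretely, write $V(G) \setminus V(\tilde G)$ as a disjoint union of the interiors $\mathrm{int}_i := B_{r-1}(s_i)$ over $i$, each of which is a tree (by Lemma \ref{lem:close-centers-unlikely} and Corollary \ref{cor:Gtilde-degrees}) whose only edges leaving $\mathrm{int}_i$ go to $W_i \subset B$. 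An independent set $\omega$ of $G$ with $\omega(B) = \sigma$ restricts to an independent set of $\tilde G$ with the same boundary value $\sigma$, and conversely any independent set of $\tilde G$ with boundary $\sigma$ extends to independent sets of $G$ by independently choosing, on each $\mathrm{int}_i$, an independent set of the induced tree that is compatible with $\sigma|_{W_i}$ on the boundary.

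The key step is therefore a factorization of the partition function. Since the pieces $\mathrm{int}_i$ are vertex-disjoint and each is joined to the rest of $G$ only through $B$ (whose configuration is frozen to $\sigma$), summing $\lambda^{|\omega|}$ over all extensions factors as a product over $i$ of local partition functions
\[
\kappa_i(\sigma) \defeq \sum_{\tau \in I(\mathrm{int}_i):\ \tau \cup \sigma|_{W_i} \in I(B_r(s_i))} \lambda^{|\tau|},
\]
a quantity that depends on $\sigma$ only through $\sigma|_{W_i}$, times an analogous factor from $W_{k+1}$ if nonempty. Setting $\kappa(\sigma) \defeq \prod_i \kappa_i(\sigma) \cdot (\text{exceptional factor})$, we get $Z_{G,\sigma} = \kappa(\sigma) Z_{\tilde G,\sigma}$ immediately, because the attached pieces contribute no vertices to $V(\tilde G) \setminus B$ and hence do not affect which independent sets of $\tilde G$ are being summed. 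For the refined identity, the crucial observation is that all the vertices counted by the constraint $\sum_{v \in V(\tilde G)\setminus B}\omega_v = \alpha m$ lie in $\tilde G$, not in any $\mathrm{int}_i$; thus imposing this cardinality constraint commutes with the factorization, and the same $\kappa(\sigma)$ works: $Z_{G,\alpha,\sigma} = \kappa(\sigma) Z_{\tilde G,\alpha,\sigma}$.

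I do not expect a serious obstacle here; this is essentially a bookkeeping lemma formalizing that conditioning on $\sigma(B)$ decouples the removed neighborhoods from the bulk. The only points requiring care are (i) checking that the $\mathrm{int}_i$ are genuinely vertex-disjoint and that their boundaries lie in $B$ — which is exactly what $S'$ and the definitions \eqref{eq:def-of-B}, \eqref{eq:centers-S} guarantee, up to the $O(n^{1/5})$ exceptional vertices collected in $W_{k+1}$, which contribute one more bounded-size factor to $\kappa(\sigma)$; and (ii) making sure that no vertex outside $B$ is being double-counted between $V(\tilde G)\setminus B$ and the attached pieces, which holds because $V(G) = V(\tilde G) \sqcup \bigcup_i \mathrm{int}_i$ by construction. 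With these observations the two displayed identities follow directly from distributing the sum over independent sets as a product over the decoupled blocks.
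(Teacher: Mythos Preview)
Your proposal is correct and is exactly the argument the paper has in mind: the paper's entire proof is the single line ``By the Markov property,'' and what you have written is a careful unpacking of precisely that factorization through the boundary $B$. One minor point you might make explicit is that for $1\le i\le k$ the interior pieces $\mathrm{int}_i$ are all isomorphic (each is a $d$-regular tree of depth $r-1$, by the definition of $S'$), which is why $\kappa_i=\kappa_j$ as functions.
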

where $\kappa(\sigma)$ is a constant depending only on the
configuration $\sigma$ and has a product structure
\[
\kappa (\sigma) = \displaystyle\prod_{i=1}^{k+1}\kappa_i(\sigma(W_i)).
\]
and $\kappa_i =  \kappa_j$ for $1 \le i, j \le k$.
\begin{proof}
By the Markov property.
\end{proof}

Putting these results together, we obtain the following.
\begin{proposition}\label{prop:sec-moment-G-sigma}For any $\sigma \in \{0,1\}^B$,
\begin{align*}
\hat \E((Z_{G,\sigma})^2)  \le  \exp(O(n^{\frac 15})) (\hat \E(Z_{G,\sigma}))^2.
\end{align*}

\end{proposition}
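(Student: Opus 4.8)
The plan is to assemble the statement directly from the results already proved for the punctured graph $\tilde G$, together with the factorization $Z_{G,\sigma}=\kappa(\sigma)Z_{\tilde G,\sigma}$ of Lemma~\ref{lem:markov-G-tilde}. The one bookkeeping point is the treatment of $\kappa(\sigma)$: conditionally on the structure defining $\hat{\mathcal H}(n,d)$ (under which, by Lemma~\ref{lem:close-centers-unlikely} and Corollary~\ref{cor:Gtilde-degrees}, the deleted caps $B_{r-1}(s_i)$ for $s_i\in S'$ are fixed complete trees and only $O(n^{1/5})$ further vertices are involved in the exceptional boundary piece $W_{k+1}$), $\kappa(\sigma)$ is a deterministic positive quantity depending only on $\sigma$. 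Hence it pulls out of $\hat\E$ in squared form, giving $\hat\E((Z_{G,\sigma})^2)=\kappa(\sigma)^2\,\hat\E((Z_{\tilde G,\sigma})^2)$ and $\hat\E(Z_{G,\sigma})=\kappa(\sigma)\,\hat\E(Z_{\tilde G,\sigma})$, so that $\kappa(\sigma)$ will cancel at the end.

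With that reduction in place, I would run the following chain. First, Proposition~\ref{prop:tildeG-sigmaB-sigma0-secmom} reduces $\hat\E((Z_{\tilde G,\sigma})^2)$ to the empty-boundary case $\sigma_0$ up to a factor $\exp(O(n^{1/5}))(\chi(\sigma))^2$. Next, Proposition~\ref{prop:tildeG--sigma0-secmom-bound} supplies the genuine second-moment bound $\hat\E(Z_{\tilde G,\sigma_0}^2)\le\exp(O(n^{1/5}))(\hat\E(Z_{\tilde G,\sigma_0}))^2$ for the punctured graph with trivial boundary. Then Corollary~\ref{cor:tildeG-sigmaB-sigma0-firstmom}, used in the reverse direction, converts $\chi(\sigma)\hat\E(Z_{\tilde G,\sigma_0})$ back into $\exp(O(n^{1/5}))\hat\E(Z_{\tilde G,\sigma})$, reinstating the boundary configuration. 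Reintroducing $\kappa(\sigma)^2$ via Lemma~\ref{lem:markov-G-tilde} closes the loop. Explicitly:
\begin{align*}
\hat\E\big((Z_{G,\sigma})^2\big) &= \kappa(\sigma)^2\,\hat\E\big((Z_{\tilde G,\sigma})^2\big) \\
&\le \kappa(\sigma)^2\exp\!\big(O(n^{\frac 15})\big)(\chi(\sigma))^2\,\hat\E\big(Z_{\tilde G,\sigma_0}^2\big) \\
&\le \kappa(\sigma)^2\exp\!\big(O(n^{\frac 15})\big)(\chi(\sigma))^2\,\big(\hat\E(Z_{\tilde G,\sigma_0})\big)^2 \\
&= \kappa(\sigma)^2\exp\!\big(O(n^{\frac 15})\big)\big(\chi(\sigma)\hat\E(Z_{\tilde G,\sigma_0})\big)^2 \\
&= \kappa(\sigma)^2\exp\!\big(O(n^{\frac 15})\big)\big(\hat\E(Z_{\tilde G,\sigma})\big)^2 \\
&= \exp\!\big(O(n^{\frac 15})\big)\big(\hat\E(Z_{G,\sigma})\big)^2,
\end{align*}
where the successive steps are justified by Lemma~\ref{lem:markov-G-tilde}, Proposition~\ref{prop:tildeG-sigmaB-sigma0-secmom}, Proposition~\ref{prop:tildeG--sigma0-secmom-bound}, Corollary~\ref{cor:tildeG-sigmaB-sigma0-firstmom}, and Lemma~\ref{lem:markov-G-tilde} again.

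I do not expect a genuine obstacle here: all the substantive work (the quadratic decay of $f$ near its maximum, the concentration of the mass of $Z_{\tilde G,\sigma}$ on $\alpha$ within $O(n^{-2/5})$ of $\alpha^*$, and the second-moment-to-first-moment-squared comparison for $Z_{G_m}$) has been absorbed into the cited propositions. The only points requiring care are the two flagged above, namely (i) arguing that $\kappa(\sigma)$ is constant under the conditioning so that it cancels cleanly between $\hat\E((Z_{G,\sigma})^2)$ and $(\hat\E(Z_{G,\sigma}))^2$, and (ii) observing that only $O(1)$ multiplicative slacks of the form $\exp(O(n^{1/5}))$ are composed, so that they collapse into a single such factor; both are immediate.
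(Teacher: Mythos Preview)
Your proposal is correct and follows essentially the same route as the paper: the paper's proof is precisely the four-step chain Lemma~\ref{lem:markov-G-tilde} $\to$ Proposition~\ref{prop:tildeG-sigmaB-sigma0-secmom} $\to$ Proposition~\ref{prop:tildeG--sigma0-secmom-bound} $\to$ Corollary~\ref{cor:tildeG-sigmaB-sigma0-firstmom} combined with Lemma~\ref{lem:markov-G-tilde}, identical to yours. Your explicit remark that $\kappa(\sigma)$ is deterministic under the conditioning (and hence passes through $\hat\E$) is a helpful clarification that the paper leaves implicit.
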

\begin{proof}

\begin{align*}
\nonumber \hat \E((Z_{G,\sigma})^2) & =  \hat \E((Z_{\tilde G,\sigma})^2
(\kappa(\sigma))^2)  &  \text{(by Lemma \ref{lem:markov-G-tilde})}\\
\nonumber & \le \exp\left(O(n^{\frac 15})\right)  \hat \E((Z_{\tilde G,{\sigma_0}})^2
(\chi(\sigma)\kappa(\sigma))^2) & \text{(by Proposition \ref{prop:tildeG-sigmaB-sigma0-secmom} )} \\
\nonumber & \le \exp\left(O(n^{\frac 15})\right) ( \hat \E(Z_{\tilde
  G,{\sigma_0}}\chi(\sigma)\kappa(\sigma) ))^2 \qquad \nonumber & \text{(by
 Proposition \ref{prop:tildeG--sigma0-secmom-bound})} \\
& \le   \exp\left(O(n^{\frac 15})\right)( \hat \E(Z_{G,\sigma}))^2. & \text{(by Corollary \ref{cor:tildeG-sigmaB-sigma0-firstmom} and Lemma \ref{lem:markov-G-tilde})} 
\end{align*}
\end{proof}

\section{Local Weak Convergence to the Free Measure on the Tree}\label{sec:local-weak-convergence}

The first result in this section shows that there does not exist a ``bad
set'' of neighborhoods with large stationary probability where the partition function is much larger than the
expected partition function.

\begin{proposition}\label{prop:E1-E2}
Let $c>0$ and suppose that $G \sim  \hat{\mathcal H}(n,d)$. The probability that there exists a set of independent set configurations $\mathcal B \subset
\{0,1\}^{B}$ such
that
\begin{align}
\mathcal E_1(\mathcal B):= \left \{ \displaystyle\sum_{\sigma \in \mathcal B}Z_{G,\sigma} >
\exp\left(cn^{\frac 35}\right)\displaystyle\sum_{\sigma \in \mathcal B} \hat \E\left(Z_{G,\sigma}\right) \right\}
\label{eq:bigger-than-avg} 
\end{align}
and
\begin{align}
\mathcal E_2(\mathcal B):= \left \{ \displaystyle\sum_{\sigma \in \mathcal B}Z_{G,\sigma} >
\exp\left(-n^{\frac 47}\right) \hat \E\left(Z_{G}\right) \right\}
\label{eq:bad-large-mass}
\end{align}
is at most $ \exp\left(-n^{\frac 35}(c-o(1))\right)$.
\end{proposition}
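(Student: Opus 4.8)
The plan is to avoid any union bound over the $2^{|B|}$ candidate sets $\mathcal B$, by reducing the existence of a single bad set to a tail event for one fixed random variable, the size–biased sum
\[
\Xi \defeq \sum_{\sigma \in \{0,1\}^B :\, \hat\E(Z_{G,\sigma}) > 0} \frac{(Z_{G,\sigma})^2}{\hat\E(Z_{G,\sigma})}.
\]
The first step is to control $\hat\E(\Xi)$. By linearity of expectation and Proposition~\ref{prop:sec-moment-G-sigma}, whose multiplicative error $\exp(O(n^{1/5}))$ is uniform in $\sigma$,
\[
\hat\E(\Xi) \;=\; \sum_{\sigma} \frac{\hat\E\big((Z_{G,\sigma})^2\big)}{\hat\E(Z_{G,\sigma})} \;\le\; \exp\!\big(O(n^{1/5})\big)\sum_{\sigma}\hat\E(Z_{G,\sigma}) \;=\; \exp\!\big(O(n^{1/5})\big)\,\hat\E(Z_G),
\]
using the identity $\sum_{\sigma \in \{0,1\}^B} Z_{G,\sigma} = Z_G$. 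The exponentially many terms are harmless precisely because the error factor is uniform and the first moments sum to $\hat\E(Z_G)$.

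The second step is to show that $\mathcal E_1(\mathcal B)\cap\mathcal E_2(\mathcal B)$ forces $\Xi$ to be large, for \emph{any} $\mathcal B$. Suppose such a $\mathcal B$ exists; discarding from $\mathcal B$ those $\sigma$ with $\hat\E(Z_{G,\sigma})=0$ (for which $Z_{G,\sigma}=0$ almost surely, so that neither side of $\mathcal E_1$ nor $\mathcal E_2$ changes a.s.), set $Y_{\mathcal B}\defeq\sum_{\sigma\in\mathcal B}Z_{G,\sigma}$ and $\mu_{\mathcal B}\defeq\sum_{\sigma\in\mathcal B}\hat\E(Z_{G,\sigma})$. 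Cauchy--Schwarz gives
\[
Y_{\mathcal B}^2 \;=\; \left(\sum_{\sigma\in\mathcal B}\frac{Z_{G,\sigma}}{\sqrt{\hat\E(Z_{G,\sigma})}}\,\sqrt{\hat\E(Z_{G,\sigma})}\right)^2 \;\le\; \left(\sum_{\sigma\in\mathcal B}\frac{(Z_{G,\sigma})^2}{\hat\E(Z_{G,\sigma})}\right)\mu_{\mathcal B} \;\le\; \Xi\,\mu_{\mathcal B},
\]
so $\mu_{\mathcal B}\ge Y_{\mathcal B}^2/\Xi$. Feeding this into $\mathcal E_1(\mathcal B)$, i.e.\ $Y_{\mathcal B}>\exp(cn^{3/5})\mu_{\mathcal B}$, and dividing by $Y_{\mathcal B}>0$ yields $\Xi>\exp(cn^{3/5})\,Y_{\mathcal B}$; combining with $\mathcal E_2(\mathcal B)$, i.e.\ $Y_{\mathcal B}>\exp(-n^{4/7})\hat\E(Z_G)$, gives
\[
\Xi \;>\; \exp\!\big(cn^{3/5}-n^{4/7}\big)\,\hat\E(Z_G) \;=\; \exp\!\big(cn^{3/5}(1-o(1))\big)\,\hat\E(Z_G),
\]
where we used $n^{4/7}=o(n^{3/5})$.

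The third step is Markov's inequality: the event that a bad $\mathcal B$ exists is contained in $\{\Xi > \exp(cn^{3/5}(1-o(1)))\,\hat\E(Z_G)\}$, and therefore has probability at most
\[
\frac{\hat\E(\Xi)}{\exp(cn^{3/5}(1-o(1)))\,\hat\E(Z_G)} \;\le\; \exp\!\big(O(n^{1/5})-cn^{3/5}(1-o(1))\big) \;=\; \exp\!\big(-n^{3/5}(c-o(1))\big),
\]
which is the asserted bound. I expect the only delicate points to be: (i) checking that the constant in Proposition~\ref{prop:sec-moment-G-sigma} is genuinely independent of $\sigma$, so that the bound on $\hat\E(\Xi)$ survives the summation over all $2^{|B|}$ boundary configurations; and (ii) the exponent bookkeeping $n^{1/5}\ll n^{4/7}\ll n^{3/5}$, which guarantees that both the second–moment error and the $\mathcal E_2$ threshold are of strictly lower order than the gain $cn^{3/5}$. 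Everything else is routine. The conceptual heart of the argument is the choice of the statistic $\Xi$: its bounded first moment repackages all of the second–moment estimates of Sections~\ref{sec:partition-fn-G}--\ref{sec:tildeG}, while $\Xi$ is forced to be large only by $\mathcal E_1$ and $\mathcal E_2$ acting together and never by either alone.
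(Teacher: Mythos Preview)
Your proof is correct and shares the same underlying mechanism as the paper's: both ultimately reduce the event $\{\exists\,\mathcal B:\mathcal E_1(\mathcal B)\cap\mathcal E_2(\mathcal B)\}$ to a Markov bound on the size-biased sum $\Xi=\sum_\sigma Z_{G,\sigma}^2/\hat\E(Z_{G,\sigma})$, whose expectation is controlled by Proposition~\ref{prop:sec-moment-G-sigma} uniformly in $\sigma$. The difference is in how you arrive there. The paper introduces the random subset $\mathcal D=\{\sigma\in\mathcal B:Z_{G,\sigma}>\tfrac13\exp(cn^{3/5})\hat\E(Z_{G,\sigma})\}$, shows via a short case split that $\mathcal D$ carries at least half of $\sum_{\sigma\in\mathcal B}Z_{G,\sigma}$ whenever $\mathcal E_1(\mathcal B)$ holds, and then uses the pointwise bound $Z_{G,\sigma}\mathbbm 1_{\sigma\in\mathcal D}\le 3\exp(-cn^{3/5})Z_{G,\sigma}^2/\hat\E(Z_{G,\sigma})$ before taking expectations. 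Your Cauchy--Schwarz step $Y_{\mathcal B}^2\le\Xi\,\mu_{\mathcal B}$ accomplishes the same reduction in one line, without the auxiliary set or the case analysis, and it makes transparent that the event to be bounded is a single tail event for the fixed statistic $\Xi$. The paper's decomposition has the minor advantage of identifying explicitly the ``heavy'' configurations responsible for the deviation, but your argument is more economical and arguably exposes the role of $\Xi$ more cleanly; the exponent bookkeeping $n^{1/5}\ll n^{4/7}\ll n^{3/5}$ is identical in both.
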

The choice of $\frac 47$ could be replaced with any constant less than $\frac 35$ and greater than $\frac 12$.
\begin{proof}
Suppose that there is a set $\mathcal B$ of boundary configurations such that $\mathcal E_1(\mathcal B)$ and $\mathcal E_2(\mathcal B)$ hold.
Define the set of configurations
\[
\mathcal D \defeq \left\{\sigma \in \mathcal B \ : \ Z_{G,\sigma} >
  \frac{1}{3} \exp\left(cn^{\frac 35}\right) \hat \E\left(Z_{G,\sigma}\right) \right\}.
\]
Suppose it was the case that
\begin{align*}
\displaystyle\sum_{\sigma \in \mathcal B \setminus \mathcal D}Z_{G,\sigma}
> \frac{1}{2}\displaystyle\sum_{\sigma \in \mathcal B}Z_{G,\sigma}.
\end{align*}

Then,
\begin{align*}
\frac{1}{3}\exp\left(cn^{\frac 35}\right)\displaystyle\sum_{\sigma \in \mathcal B
  \setminus \mathcal D} \hat \E(Z_{G,\sigma})
> \displaystyle\sum_{\sigma \in \mathcal B \setminus \mathcal
  D}Z_{G,\sigma} > \frac{1}{2}\displaystyle\sum_{\sigma \in \mathcal B}Z_{G,\sigma}.
\end{align*}
This contradicts \eqref{eq:bigger-than-avg} (that $\mathcal E_1(\mathcal B)$ holds) and thus we may assume
that
\begin{align*}
\displaystyle\sum_{\sigma \in \mathcal D}Z_{G,\sigma}
> \frac{1}{2}\displaystyle\sum_{\sigma \in \mathcal B}Z_{G,\sigma}.
\end{align*}
Therefore, by \eqref{eq:bad-large-mass} (that $\mathcal E_2(\mathcal B)$ holds) we have
\begin{align}\label{eq:D-large-part-of-B}
\displaystyle\sum_{\sigma \in \mathcal D}Z_{G,\sigma}
> \frac{1}{2}\displaystyle\sum_{\sigma \in \mathcal B}Z_{G,\sigma}
> \frac{1}{2}\exp\left(-n^{\frac 47}\right)\displaystyle\sum_{\sigma \in \{0,1\}^B} \hat \E\left(Z_{G,\sigma}\right).
\end{align}

By \eqref{eq:D-large-part-of-B}, and Markov's inequality,
\begin{align} \label{eq:prob-e1-e2}
 \hat \P\left(\exists \mathcal B \ \text{s.t.} \ \mathcal E_1(\mathcal  B) \cap \mathcal E_2(\mathcal  B) \right)&  \le  \hat \P\left( \displaystyle\sum_{\sigma \in \mathcal D}
Z_{G,\sigma} > \frac 12 \exp\left(-n^{\frac 47}\right)\sum_{\sigma \in \{0,1\}^B} \hat \E\left(Z_\sigma\right)\right) \le \frac{2 \hat \E\left(\displaystyle\sum_{\sigma \in \mathcal D}
Z_{G,\sigma}\right)}{ \exp\left(-n^{\frac 47}\right)\displaystyle\sum_{\sigma \in \{0,1\}^B} \hat \E\left(Z_\sigma\right)}.
\end{align}
By the definition of $\mathcal D$ and Proposition \ref{prop:sec-moment-G-sigma} we have
\begin{align}\label{eq:expectation-D}
\nonumber  \hat \E\left( \displaystyle\sum_{\sigma \in \mathcal D}Z_{G,\sigma} \right) & <
3 \exp\left(-cn^{\frac 35}\right)\displaystyle\sum_{\sigma \in \mathcal
  D} \hat \E\left( Z_{G,\sigma} \frac{Z_{G,\sigma}}{ \hat \E\left(Z_{G,\sigma}\right)} \right)  \le 3\exp\left(-cn^{\frac 35}\right)\displaystyle\sum_{\sigma \in \{0,1\}^B} \hat \E\left(
  Z_{G,\sigma} \frac{Z_{G,\sigma}}{ \hat \E\left(Z_{G,\sigma}\right)} \right) \le \\
  & \le \exp\left(-cn^{\frac 35}\right)\exp\left(O(n^{\frac 15})\right) \displaystyle\sum_{\sigma \in \{0,1\}^B}  \hat \E\left(Z_{G,\sigma}\right).
\end{align}
Putting together \eqref{eq:prob-e1-e2} and \eqref{eq:expectation-D}, we obtain that
\begin{align*}
 \hat \P\left(\exists \mathcal B \ \text{s.t.} \ \mathcal E_1(\mathcal  B) \cap \mathcal E_2(\mathcal  B)\right) \le \exp\left(-n^{\frac 35}(c-o(1))\right),
\end{align*}
which completes the proof of Proposition \ref{prop:E1-E2}.
\end{proof}

Recall the definition of the set of vertices $S'$ from \eqref{eq:centers-S} and recall that for each $s_i \in S'$, $W_i$ is the set of vertices on the boundary $\partial B_r(s_i)$ and $|S'|=k$. Fix $1 \le i \le |S'|$. Let $T_1,\ldots,T_{|W_i|}$ be $(d-1)$-ary trees and let $\tilde T$ be their union. Let $\P_{\tilde T}$ be the product of the free measures on the trees. Let us identify the roots $u^{(i)} = \{u_1,\ldots,u_{|W_i|}\}$ of these trees  with the vertices of $W_i$. Let $T$ be the tree obtained by joining to $\tilde T$ a $d$-ary tree of depth $r$ whose leaves are identified with the $u_i$.

\begin{lemma}\label{lem:tree-measure-equals-graph-measure}
Define the distribution $\nu$ on $\{0,1\}^B$ by
\[
\nu(A): = \frac{\displaystyle \sum_{\sigma \in A}\chi(\sigma) \kappa(\sigma)}{\displaystyle \sum_{\sigma \in \{0,1\}^B} \chi(\sigma) \kappa(\sigma)}
\]
 If $\sigma \sim \nu$, then for each $1 \le i \le k$,
 the $\sigma(W_i)$ are independent and
\begin{align}\label{eq:graph-tree-measure}
\nu(\sigma(W_i) \in \cdot) = \P_{T}(\sigma(u^{(i)}) \in \cdot) = \P_{T_d}(B_r(\rho) \in \cdot).
\end{align}
\end{lemma}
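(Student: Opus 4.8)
The plan is to exploit the product structure of both $\chi$ and $\kappa$ to recognize $\nu$ as a product measure over the blocks $W_1,\dots,W_{k+1}$, and then to identify each of the first $k$ factors with the hardcore law on a radius-$r$ ball of $T_d$ by a short tree computation. Setting $g:=\lambda\bigl(\tfrac{1-2\alpha^*}{1-\alpha^*}\bigr)^{d-1}$, the definition of $\chi$ in Proposition~\ref{prop:ratio-of-first-moments} gives $\chi(\sigma)=g^{|\sigma|}$, and since $W_1,\dots,W_{k+1}$ partition $B$ this equals $\prod_{i=1}^{k+1}g^{|\sigma(W_i)|}$. Combined with the product formula $\kappa(\sigma)=\prod_{i=1}^{k+1}\kappa_i(\sigma(W_i))$ of Lemma~\ref{lem:markov-G-tilde}, the weight $\chi(\sigma)\kappa(\sigma)$ factorizes over the $W_i$, so $\nu=\bigotimes_{i=1}^{k+1}\nu_i$ with $\nu_i(\psi)\propto g^{|\psi|}\kappa_i(\psi)$; in particular $\sigma(W_1),\dots,\sigma(W_k)$ are independent under $\nu$, and since the $\kappa_i$ coincide for $i\le k$ and $\chi$ is symmetric these blocks share a common marginal. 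It remains to identify that marginal.

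Fix $i\le k$. Since $s_i\in S'$, the induced subgraph on $B_r(s_i)$ is a tree, isomorphic as a rooted tree to the radius-$r$ ball $B_r(\rho)$ of $T_d$, the isomorphism carrying $W_i$ onto the sphere $\partial B_r(\rho)$ (which is exactly $u^{(i)}$) and $B_{r-1}(s_i)$ onto $B_{r-1}(\rho)$; by Corollary~\ref{cor:Gtilde-degrees} all vertices of $W_i$ have degree $d-1$ in $\tilde G$, so no degree-$(d-2)$ corrections arise. Unwinding the Markov decomposition behind Lemma~\ref{lem:markov-G-tilde}, $\kappa_i(\psi)$ is the interior partition function $\sum_\tau\lambda^{|\tau|}$ over independent sets $\tau$ of $B_{r-1}(\rho)$ compatible with $\psi$ on the sphere, so that $\lambda^{|\psi|}\kappa_i(\psi)=\sum_{\eta\in I(B_r(\rho)):\,\eta(\partial B_r(\rho))=\psi}\lambda^{|\eta|}$. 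On the other side, by the spatial Markov property of $\P_{T_d}$ together with the hardcore recursion on the $(d-1)$-ary tree, the law of $\sigma(B_r(\rho))$ under $\P_{T_d}$ is proportional to $\lambda^{|\eta\cap B_{r-1}(\rho)|}\,(R^*)^{|\eta\cap\partial B_r(\rho)|}\,\mathbbm{1}[\eta\in I(B_r(\rho))]$, where $R^*$ is the fixed point of $R=\lambda/(1+R)^{d-1}$; reading off the Markov matrix $M$ one has $R^*=p_{01}/p_{00}=\tfrac{\alpha^*}{1-2\alpha^*}$. Marginalizing over the interior then gives $\P_{T_d}(\sigma(\partial B_r(\rho))=\psi)\propto (R^*)^{|\psi|}\kappa_i(\psi)$.

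To conclude I would compare the two constants: the defining relation \eqref{eq:alpha-star-eqn} of $\alpha^*$ rearranges to $g=\lambda\bigl(\tfrac{1-2\alpha^*}{1-\alpha^*}\bigr)^{d-1}=\tfrac{\alpha^*}{1-2\alpha^*}=R^*$. Hence $\nu_i(\psi)\propto g^{|\psi|}\kappa_i(\psi)=(R^*)^{|\psi|}\kappa_i(\psi)\propto \P_{T_d}(\sigma(\partial B_r(\rho))=\psi)$, and since both are probability measures they are equal; through the identifications $W_i\cong u^{(i)}$ and $T\cong T_d$ this is exactly $\nu(\sigma(W_i)\in\cdot)=\P_T(\sigma(u^{(i)})\in\cdot)$. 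The final identity with $\P_{T_d}(B_r(\rho)\in\cdot)$ follows from one more use of the Markov property: conditionally on the sphere, the interior configuration on $B_{r-1}(\rho)$ has the finite-tree hardcore law both under $\P_{T_d}$ and under the extension of $\nu$ by the product-structured conditional encoded in $\kappa$, so the two ball-laws coincide.

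The one substantive point is the tree computation in the second paragraph: one has to pin down the exact boundary message that appears when the infinite-volume measure $\P_{T_d}$ is restricted to a finite ball, and then recognize that the constant $g$ — which was produced by the \emph{first}-moment analysis of the punctured graph in Corollary~\ref{cor:tildeG-sigmaB-sigma0-firstmom} — is precisely that message $R^*$, the equality being exactly \eqref{eq:alpha-star-eqn}. The factorization, the tree isomorphism, and the degree bookkeeping are routine.
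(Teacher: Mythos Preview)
Your proposal is correct and follows essentially the same approach as the paper's proof, which is a terse four-line sketch asserting $\P_{\tilde T}(\sigma(W_i)=\omega)\propto\chi(\omega)$ and then invoking the Markov property to pick up the factor $\kappa$. You have filled in precisely the details behind those assertions: the factorization of $\nu$ over the blocks $W_i$ from the product structure of $\chi$ and $\kappa$, the identification of the boundary message $R^*=\alpha^*/(1-2\alpha^*)$ via the $(d-1)$-ary recursion, and the key observation that the equality $g=R^*$ is exactly the fixed-point relation~\eqref{eq:alpha-star-eqn}.
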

\begin{proof}  By relating the occupation probability of the root for the free measure for the $d$-regular tree and the occupation probability of the root of the free measure on the  $(d-1)$-ary tree, it can be verified that
\[
\P_{\tilde T}(\sigma(W_i) = \omega) \propto \chi(\omega).
\]
By the Markov property,
\[
\P_{ T}(\sigma(W_i)= \omega) \propto \chi(\omega)\kappa(\omega).
\]
Therefore
\begin{align}\label{eq:nu-PT}
\nu(\sigma(W_i) = \omega) = \P_{ T}(\sigma(u^{(i)}) = \omega)
\end{align}
and \eqref{eq:graph-tree-measure} follows.
\end{proof}

Let $\P_{G_n}$ denote the hardcore measure on the random $d$-regular graph of size $n$. Recall that we have local weak convergence to the free measure if for all all $r$, with high probability over a uniformly chosen vertex $u$, for $\varepsilon >0$, as $n \to \infty$,
\begin{align}\label{eq:graph-tree-reconst-equiv}
 \P\left(d_{tv}(\P_{G_n}(\sigma({B_r}(u)) \in \cdot),
  \P_{T_d}(\sigma(B_r(\rho))\in \cdot) > \varepsilon ) \right) \to 0.
 \end{align}
 The following lemma will be used in the next result.
\begin{lemma}\label{lem:markov}
Let $I$ be an index set, $(X_i)_{i \in I}$ random variables and $(a_i)_{i \in I}$ constants. Suppose that for each $i$,
\[
\P(X_i < a_i) \le \varepsilon
\]
for some $\varepsilon >0$. Then,
\[
\P\left(\sum_{i \in I} X_i < \frac 12\sum_{i \in I} a_i\right) \le 2\varepsilon.
\]
\end{lemma}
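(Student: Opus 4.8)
The plan is to run a weighted first-moment argument on the ``failure'' events, which is the reason the lemma is named after Markov's inequality. First I would set $B_i\defeq\{X_i<a_i\}$, so that $\P(B_i)\le\varepsilon$ for every $i\in I$. I would then introduce the nonnegative random variable
\[
Z\defeq\sum_{i\in I} a_i\,\mathbbm{1}_{B_i},
\]
whose expectation is $\E(Z)=\sum_{i\in I}a_i\,\P(B_i)\le\varepsilon\sum_{i\in I}a_i$ by linearity of expectation; Markov's inequality then yields $\P\bigl(Z\ge\tfrac12\sum_{i\in I}a_i\bigr)\le 2\varepsilon$.

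Next I would show that the event $\{\sum_{i\in I}X_i<\tfrac12\sum_{i\in I}a_i\}$ is contained in $\{Z\ge\tfrac12\sum_{i\in I}a_i\}$, which finishes the proof. Indeed, on the complement $\{Z<\tfrac12\sum_i a_i\}$ one has $\sum_{i\,:\,X_i<a_i}a_i<\tfrac12\sum_i a_i$, hence $\sum_{i\,:\,X_i\ge a_i}a_i>\tfrac12\sum_i a_i$; discarding the summands with $X_i<a_i$ (legitimate since every $X_i\ge 0$ --- in our application the $X_i$ are sums of partition functions $Z_{G,\sigma}$) and bounding each remaining summand below by $a_i$,
\[
\sum_{i\in I}X_i\;\ge\;\sum_{i\,:\,X_i\ge a_i}X_i\;\ge\;\sum_{i\,:\,X_i\ge a_i}a_i\;>\;\tfrac12\sum_{i\in I}a_i .
\]
Combining the two displays gives $\P\bigl(\sum_{i\in I}X_i<\tfrac12\sum_{i\in I}a_i\bigr)\le 2\varepsilon$.

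The one point worth flagging --- and really the only obstacle --- is that the argument genuinely uses nonnegativity of the $X_i$ in order to throw away the ``bad'' terms; without such an assumption the statement is false (one can make disjoint low-probability events on which a single $X_i$ is very negative), but since the lemma is only ever applied to sums of partition functions this costs nothing and I would simply record it. A minor technicality, if $I$ is allowed to be infinite, is to interpret $\sum_{i\in I}a_i$ as the supremum over finite subsets and repeat the same computation; in the intended use $I=\{1,\dots,k\}$ is finite and the issue does not arise.
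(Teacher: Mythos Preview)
Your proposal is correct and follows essentially the same route as the paper: both introduce the weighted indicator sum $\sum_i a_i\,\mathbbm{1}_{\{X_i<a_i\}}$ (the paper uses $\le$), bound its expectation by $\varepsilon\sum_i a_i$, apply Markov's inequality, and then establish the event containment via the same chain $\sum_i X_i\ge\sum_{i:X_i\ge a_i}X_i\ge\sum_{i:X_i\ge a_i}a_i$. Your observation that nonnegativity of the $X_i$ (and implicitly of the $a_i$, for Markov) is genuinely needed is well taken --- the paper's own proof uses it at the step $\sum_i X_i\ge\sum_i X_i\,\mathbbm{1}_{X_i>a_i}$ without stating it, so you are simply being more careful.
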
 
\begin{proof}We show first that 
\begin{align*}
\sum_{i} X_i < \frac 12\sum_{i} a_i \ \ \ \Rightarrow \ \ \ \sum_{i} a_i \mathds{1}_{X_i > a_i} < \frac 12 \sum_i a_i   \ \ \ \Leftrightarrow \ \ \ \sum_{i} a_i \mathds{1}_{X_i \le a_i} \ge \frac 12 \sum_i a_i.
\end{align*}
The equivalence above is immediate and the first implication can be seen as follows:
\begin{align*}
\frac 12\sum_{i} a_i  > \sum_{i} X_i \ge \sum_{i} X_i \mathds{1}_{X_i > a_i} \ge \sum_{i} a_i \mathds{1}_{X_i > a_i}. 
\end{align*}
Applying Markov's inequality, we have
\[
\P\left( \sum_{i} X_i < \frac 12\sum_{i} a_i \right) \le \P\left(\sum_{i} a_i \mathds{1}_{X_i \le a_i} \ge \frac 12 \sum_i a_i \right) \le 2\varepsilon.
\]
\end{proof}

We will show the following result which in turn implies \eqref{eq:graph-tree-reconst-equiv}, since with high probability  $k= (1-o(1))n^{3/5}$.

\begin{theorem}\label{thm:graph-tree-reconst-equiv}
Let $G \sim \mathcal H(n,d)$. Let $\omega$ be an independent set drawn according to the hardcore measure on $G$. For any $\varepsilon >0$,
\begin{align}\label{eq:**}
\frac{|\{ 1 \le i \le k \ : \  d_{tv}(\P_{G_n}(\omega(W_i)) \in \cdot), \P_{T_d}(B_r(\rho) \in \cdot)) > \varepsilon  \}|}{k} \stackrel{\P}{\to} 0
\end{align}
as $n \to \infty$.
\end{theorem}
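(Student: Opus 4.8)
The plan is to prove \eqref{eq:**} by contradiction, treating Proposition~\ref{prop:E1-E2} as a black box and working throughout in the conditional space $\hat{\mathcal H}(n,d)$ (which differs from the full configuration model only by conditioning on an event of probability $1-o(1)$, so the two notions of convergence in probability coincide). Fix $\varepsilon>0$. If \eqref{eq:**} failed there would be $\delta>0$ and a subsequence of $n$ along which, with $\hat{\mathbb P}$-probability at least $\delta$, the set
\[
\mathcal I := \Bigl\{\, 1\le i\le k \;:\; d_{tv}\bigl(\P_{G_n}(\omega(W_i)\in\cdot),\, \P_{T_d}(\sigma(B_r(\rho))\in\cdot)\bigr)>\varepsilon \,\Bigr\}
\]
has $|\mathcal I|\ge\delta k$; since $k=(1-o(1))n^{3/5}$ with high probability (Corollary~\ref{cor:Gtilde-degrees}), this means $|\mathcal I|\ge\delta(1-o(1))n^{3/5}$ on an event of probability $\ge\delta-o(1)$. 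For each $i\in\mathcal I$, Lemma~\ref{lem:tree-measure-equals-graph-measure} identifies $\P_{T_d}(\sigma(B_r(\rho))\in\cdot)$ with the law of $\sigma(W_i)$ under the measure $\nu$, so I would fix $A_i\subseteq\{0,1\}^{W_i}$ with $\P_{G_n}(\omega(W_i)\in A_i)\ge\nu(\sigma(W_i)\in A_i)+\varepsilon=:p_i+\varepsilon$ (passing to the complement if needed; note $p_i\le 1-\varepsilon$).

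Next I would turn these many locally bad indices into one bad boundary event. Set
\[
\mathcal B := \Bigl\{\, \sigma\in\{0,1\}^B \;:\; \#\{\, i\in\mathcal I : \sigma(W_i)\in A_i\,\}\ \ge\ \sum_{i\in\mathcal I}p_i+\frac{\varepsilon}{2}|\mathcal I| \,\Bigr\}.
\]
The crucial feature is that $\mathcal B$ is exponentially small for the ``expected'' measure but not for the true one. Under $\nu$ the coordinates $\sigma(W_1),\dots,\sigma(W_k)$ are independent (Lemma~\ref{lem:tree-measure-equals-graph-measure}), so the count above is a sum of independent $\mathrm{Bernoulli}(p_i)$ with mean $\sum_{i\in\mathcal I}p_i$, and Hoeffding's inequality gives $\nu(\mathcal B)\le\exp(-\frac{\varepsilon^2}{2}|\mathcal I|)\le\exp(-c_1 n^{3/5})$ with $c_1=\frac{\varepsilon^2\delta}{2}(1-o(1))$. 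Under the hardcore measure, by the choice of the $A_i$ the expected count is at least $\sum_{i\in\mathcal I}(p_i+\varepsilon)$ while it never exceeds $|\mathcal I|$, so a one-line reverse Markov estimate (applied to the complementary count $\sum_{i\in\mathcal I}\mathds{1}[\omega(W_i)\notin A_i]$) yields $\P_{G_n}(\omega(B)\in\mathcal B)\ge\varepsilon/2$, i.e. $\sum_{\sigma\in\mathcal B}Z_{G,\sigma}\ge\frac{\varepsilon}{2}Z_G$. On the first-moment side, Corollary~\ref{cor:tildeG-sigmaB-sigma0-firstmom} together with Lemma~\ref{lem:markov-G-tilde} gives, uniformly in $\sigma$, $\hat\E(Z_{G,\sigma})=\exp(O(n^{1/5}))\,\chi(\sigma)\kappa(\sigma)\,\hat\E(Z_{\tilde G,\sigma_0})$, while $\nu(\sigma)\propto\chi(\sigma)\kappa(\sigma)$, so summing over $\sigma\in\mathcal B$ and comparing with $\hat\E(Z_G)=\sum_\sigma\hat\E(Z_{G,\sigma})$ yields $\sum_{\sigma\in\mathcal B}\hat\E(Z_{G,\sigma})\le\exp(O(n^{1/5}))\,\nu(\mathcal B)\,\hat\E(Z_G)\le\exp(-\frac{c_1}{2}n^{3/5})\hat\E(Z_G)$ for $n$ large.

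To close the argument I also need a lower bound on $Z_G$: from the second-moment estimate (Proposition~\ref{prop:second-moment-square-of-first}, transferred to $\hat{\mathcal H}(n,d)$) and Paley--Zygmund, $\log Z_G\ge\log\hat\E(Z_G)-o(n)$ with probability bounded away from $0$, and since adding or removing one pairing in the configuration model changes $\log Z_G$ by $O(1)$, Azuma's inequality boosts this to $Z_G\ge\exp(-n^{0.51})\hat\E(Z_G)$ with high probability. On the intersection of this event with $\{|\mathcal I|\ge\delta k\}$ — of $\hat{\mathbb P}$-probability at least $\delta-o(1)$ along the subsequence — the set $\mathcal B$ satisfies $\mathcal E_2(\mathcal B)$ (since $\sum_{\sigma\in\mathcal B}Z_{G,\sigma}\ge\frac{\varepsilon}{2}\exp(-n^{0.51})\hat\E(Z_G)>\exp(-n^{4/7})\hat\E(Z_G)$ for $n$ large) and, choosing $c=c_1/4$ in Proposition~\ref{prop:E1-E2}, also $\mathcal E_1(\mathcal B)$ (since $\frac{\varepsilon}{2}\exp(-n^{0.51})\hat\E(Z_G)$ dominates $\exp(cn^{3/5})\sum_{\sigma\in\mathcal B}\hat\E(Z_{G,\sigma})\le\exp(-\frac{c_1}{4}n^{3/5})\hat\E(Z_G)$ once $n^{0.51}\ll n^{3/5}$). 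This contradicts the bound $\exp(-n^{3/5}(c-o(1)))\to 0$ of Proposition~\ref{prop:E1-E2}. Hence \eqref{eq:**} holds; and since $S$ is a uniformly random set, $k=(1-o(1))n^{3/5}$ with high probability, and given $\omega(W_i)$ the conditional law of $\omega$ on the rest of the ball $B_r(s_i)$ (a tree w.h.p.) agrees with the corresponding conditional law on $T_d$ (Markov property, as in Lemma~\ref{lem:tree-measure-equals-graph-measure}), this gives \eqref{eq:graph-tree-reconst-equiv}.

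The probabilistic skeleton above is short; the hard part is the bookkeeping of error scales. One must verify that the $\exp(-\Theta(n^{3/5}))$ deficit of $\nu(\mathcal B)$ genuinely dominates both the $\exp(O(n^{1/5}))$ first-moment slack created by puncturing the graph and the $\exp(n^{1/2+o(1)})$ fluctuations of $Z_G$ (so that the constant $\varepsilon/2$ is harmless and the exponent $4/7$ in Proposition~\ref{prop:E1-E2}, which may be any value in $(\frac12,\frac35)$, can be used freely), and one must justify the $O(1)$ bounded-difference property of $\log Z_G$ in the punctured configuration model underlying the Azuma step, together with the routine transfer of the second-moment bound from $\mathcal H(n,d)$ to $\hat{\mathcal H}(n,d)$.
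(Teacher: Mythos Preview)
Your proposal is correct and follows the same architecture as the paper's proof: construct the boundary event $\mathcal B$, bound $\nu(\mathcal B)$ by Hoeffding and $\P_G(\mathcal B)$ by a reverse Markov estimate, compare first moments via Corollary~\ref{cor:tildeG-sigmaB-sigma0-firstmom} and Lemma~\ref{lem:markov-G-tilde}, and close with Proposition~\ref{prop:E1-E2}. The only deviation is that you concentrate $Z_G$ directly (Paley--Zygmund from Proposition~\ref{prop:second-moment-square-of-first}, transferred to $\hat{\mathcal H}(n,d)$, plus Azuma on $\log Z_G$), whereas the paper concentrates each $Z_{G,\sigma}$ via Proposition~\ref{prop:sec-moment-G-sigma} and then aggregates using Lemma~\ref{lem:markov}; this is a minor streamlining rather than a different method, and it incidentally sidesteps the need to justify applying Lemma~\ref{lem:markov} over the $G$-dependent index set $\mathcal B$.
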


\begin{proof} Let $\mathcal E$ be the event that the left hand side of \eqref{eq:**} is at least $\delta > 0$.  If $\mathcal E$ occurs, then by the definition of total variation distance, there exists a set $J$ of indices of size $\delta k$ and $A_i \in \{0,1\}^{W_i}$ for $i \in J$ such that
\begin{align*}
\P_G\left(\omega(W_i) \in A_i\right)- \P_{T}(\omega(u^{(i)}) \in A_i) >\varepsilon \ \ \ \ \forall i \in J.
\end{align*}

This implies
\begin{align}\label{eq:expected-num-bad-neighborhoods}
& \E\left(\displaystyle\sum_{i \in J} \mathds 1(\omega(W_i) \in A_i)\right)-  \displaystyle\sum_{i \in J} \P_{T}(\omega(u^{(i)}) \in A_i) \ge \varepsilon \delta
k .
\end{align}
Using the fact that $\displaystyle\sum_{i \in J} \mathbbm 1(\omega(W_i) \in A_i) \le \delta k$, we obtain that
\begin{align}
\nonumber  \E\left(\displaystyle\sum_{i \in J} \mathbbm 1(\omega(W_i) \in A_i)\right) & - \displaystyle\sum_{i \in J} \P_{T}(\omega(u^{(i)}) \in A_i) \le \\
  \le \frac{\varepsilon \delta n^{\frac 35}}{2} & + \delta k \P\left( \displaystyle\sum_{i \in J} \mathds 1 (\omega(W_i) \in A_i) >
\frac{\varepsilon \delta n^{\frac 35}}{2} + \displaystyle\sum_{i \in J}
\P_{T}(\omega(u^{(i)}) \in A_i)\right) . \label{eq:prob-bad-neighborhoods}
\end{align}
Combining \eqref{eq:expected-num-bad-neighborhoods} and \eqref{eq:prob-bad-neighborhoods} and using the fact that $k \ge (1-o(1))n^{\frac 35}$, we get
\begin{align}
 \label{eq:probability-of-mathcalB}
 \P\left( \displaystyle\sum_{i \in J} \mathds 1 (\omega(W_i) \in A_i) >
\frac{\varepsilon \delta n^{\frac 35}}{2} + \displaystyle\sum_{i \in J}
\P_{T}(\omega(u^{(i)}) \in A_i)\right) \ge \frac{\varepsilon }{3}.
\end{align}
Define the set of configurations
\[
 \mathcal B := \left \{ \sigma \in \{0,1\}^{\cup_i W_i} \mathrm{ \ \  s.t.  \ \ }  \displaystyle\sum_{i \in J} \mathbbm 1 (\sigma \in A_i) >
\frac{\varepsilon \delta n^{\frac 35}}{2} + \displaystyle\sum_{i \in J}
\P_{T}(\sigma \in A_i) \right \}.
\]
By \eqref{eq:probability-of-mathcalB}, on the event $\mathcal E$
\begin{align}\label{eq:ZGsigma-ZG}
\sum_{\sigma \in \mathcal B} Z_{G,\sigma} \ge \frac{\varepsilon}{3} Z_G.
\end{align}
In particular, \eqref{eq:ZGsigma-ZG} holds when $G \sim \hat{\mathcal H}(n,d)$. By Proposition \ref{prop:sec-moment-G-sigma}, for any $\sigma \in \{0,1\}^B$ and  $G \sim \hat{\mathcal H}(n,d)$,
\begin{align*}
\frac{\hat \E(Z_{G,\sigma}^2)}{(\hat\E(Z_{G,\sigma}))^2} \le \exp\left( O(n^{\frac 15})\right),
\end{align*}
so that by the Paley-Zygmund inequality,
\begin{align*}
\hat \P\left(Z_{G,\sigma} > \frac{1}{2} \hat\E(Z_{G,\sigma})\right) \ge \exp\left( -O(n^{\frac 15})\right).
\end{align*}
Indeed, by Markov's inequality, it follows that for any $\kappa >0$
\begin{align}
\label{eq:ZG-in-EZG-interval}
\hat \P\left(Z_{G,\sigma} \in \left[\frac{\hat\E(Z_{G,\sigma})}{2},e^{n^\kappa} \hat\E(Z_{G,\sigma})\right]\right) \ge \exp\left( -O(n^{\frac 15})\right).
\end{align}

On the other hand, by Azuma's inequality, for all $\kappa >0$,
\begin{align}\label{eq:Azuma}
\hat \P\left(|\ln Z_{G,\sigma} - \hat \E(\ln Z_{G,\sigma})| \ge n^{\frac 12 +\kappa}\right) \le \exp\left(-\frac{n^{1+2 \kappa}}{2 n d \lambda}\right) \le \exp(-n^{2 \kappa}).
\end{align}

Together, \eqref{eq:ZG-in-EZG-interval} and \eqref{eq:Azuma} imply that the interval
\[[ \ln(\hat \E(Z_{G,\sigma}) -\ln2,{n^\kappa } + \ln \hat \E(Z_{G,\sigma})] \cap [\hat \E(\log Z_{G,\sigma}) - n^{1/2+\kappa},\hat \E(\log Z_{G,\sigma}) + n^{1/2+\kappa}]\] is non-empty
and hence
\[
|\hat \E(\ln Z_{G,\sigma}) - \ln(\hat \E(Z_{G,\sigma}))| \le 2n^{\frac 12 + \kappa}.
\]
Plugging this into \eqref{eq:Azuma}, we obtain that for each fixed $\sigma$,
\[
\hat \P\left( Z_{G,\sigma} \ge \exp\left(-O(n^{\frac 12 + \kappa})\right)\hat \E(Z_{G,\sigma}) \right) \ge 1- \exp(-n^{2 \kappa}).
\]
By Lemma \ref{lem:markov},
\[
\hat \P\left( \sum_{\sigma \in \mathcal B} Z_{G,\sigma} \ge \exp\left(-O(n^{\frac 12 + \kappa})\right) \sum_{\sigma \in \mathcal B} \hat \E(Z_{G,\sigma}) \right) \ge 1- 2 \exp(-n^{2 \kappa})
\]
and combining with \eqref{eq:ZGsigma-ZG}, we have that
\[
\hat \P\left( \sum_{\sigma \in \mathcal B} Z_{G,\sigma} \ge \exp\left(-O(n^{\frac 12 + \kappa})\right)\hat \E(Z_{G}) \right) \ge 1- 2 \exp(-n^{2 \kappa}).
\]
That is, we have shown that the event $\mathcal E_2(\mathcal B)$, as defined in Proposition \ref{prop:E1-E2} holds with high probability on the event $\mathcal E$.
By Corollary \ref{cor:tildeG-sigmaB-sigma0-firstmom} and Lemma \ref{lem:markov-G-tilde} for each $\sigma \in \mathcal B$, we have
\[
\hat \E(Z_{G,\sigma}) = \exp\left( O(n^{\frac 15}) \right) \chi(\sigma) \kappa(\sigma) \hat \E(Z_{G,\sigma_0}).
\]
Summing this over all possible $\sigma $, we have
\[
\hat \E(Z_{G}) = \sum_{\sigma \in \{0,1\}^B }\exp\left( O(n^{\frac 15}) \right) \chi(\sigma) \kappa(\sigma) \hat \E(Z_{G,\sigma_0}),
\]
and comparing these two equalities, we obtain that
\begin{align}
\label{eq:ZG-bad-close-to-ZG-nuB}
\sum_{\sigma \in \mathcal B}\hat \E(Z_{G,\sigma}) = \exp\left( O(n^{\frac 15}) \right) \frac{ \displaystyle \sum_{\sigma \in \mathcal B}\chi(\sigma) \kappa(\sigma)}{\displaystyle \sum_{\sigma \in \{0,1\}^B} \chi(\sigma) \kappa(\sigma)} \hat \E(Z_{G}) = \exp\left( O(n^{\frac 15}) \right) \nu(\mathcal B) \hat \E(Z_{G}).
\end{align}
By 
Lemma \ref{lem:tree-measure-equals-graph-measure} and Azuma-Hoeffding,

\begin{align*}
\nu(\mathcal B) & = \nu \left( \displaystyle\sum_{i \in J} \mathbbm 1 (\sigma \in A_i) >
\frac{\varepsilon \delta n^{\frac 35}}{2} + \displaystyle\sum_{i \in J}
\P_{T}(\sigma \in A_i) \right) \\
&=\nu \left( \displaystyle\sum_{i \in J} \mathds 1 (\sigma \in A_i) - \nu \left (\displaystyle\sum_{i \in J} \mathds 1 (\sigma \in A_i) \right) >
\frac{\varepsilon \delta n^{\frac 35}}{2}  \right) \le \exp \left( -cn^{\frac 35}\right).
\end{align*}
Combining the above bound with \eqref{eq:ZG-bad-close-to-ZG-nuB}, we obtain that
\begin{align}\label{eq:EZG-bad-ub}
\sum_{\sigma \in \mathcal B}\hat \E(Z_{G,\sigma}) \le \exp\left( - cn^{\frac 35}\right) \hat \E(Z_G).
\end{align}

Using \eqref{eq:EZG-bad-ub}, we have that if $\mathcal E$ and $\mathcal E_2(\mathcal B)$ hold, then 
\[
\sum_{\sigma \in \mathcal B} Z_{G,\sigma}  >  \exp\left(-O(n^{\frac 12 + \kappa})\right)\hat\E(Z_G) \ge   \exp\left(cn^{\frac 35}\right) \hat \E \left( \sum_{\sigma \in \mathcal B} Z_{G,\sigma}  \right),
\]
and therefore, the event $\mathcal E_1(\mathcal B)$ as defined in Proposition \ref{prop:E1-E2} holds. Therefore, on the event $\{\mathcal E \cap \mathcal E_2(\mathcal B)\}$, $\mathcal E_1(\mathcal B)$ holds with probability at least $1-2\exp(-n^{2\kappa})$.

To summarise, we have shown that on the event $\mathcal E$, the event $\{\mathcal \E_1(\mathcal B) \cap \mathcal E_2(\mathcal B)\}$  holds with high probability.
Applying Proposition \ref{prop:E1-E2},
\begin{align*}
\exp(-c n^{\frac 35}) \ge \hat \P(\mathcal E_1(\mathcal B) \cap \mathcal E_2(\mathcal B)) \ge \hat \P(\mathcal E_1(\mathcal B) \cap \mathcal E_2(\mathcal B) | \mathcal E) \hat \P(\mathcal E) \ge (1-o(1)) \hat \P(\mathcal E).
\end{align*}
Thus, $\hat \P(\mathcal E) \to 0$ as $n \to \infty$. Since $G \sim \hat{\mathcal H}(n,d)$ with high probability, $ \P(\mathcal E) \to 0$ as $n \to \infty$ and the claim follows.
\end{proof}

Theorem \ref{thm:graph-tree-reconst-equiv} implies \eqref{eq:graph-tree-reconst-equiv}, which establishes local weak convergence to the free measure, proving Theorem~\ref{thm:local-weak-conv}.  Given local weak convergence, the equivalence of the reconstruction thresholds and hence Theorem~\ref{thm:graph-tree-reconst}, follow.

\section{Technical Lemmas about the partition function}\label{sec:tech-lemmas}
In this section we show that the second moment $\E(Z_{G}^2)$ is close
to the the square of the first moment $\E(Z_{G})$ and satisfies a quadratic decay property. Let
\[
H_1(x,y) = -x(\ln(x)-\ln(y))+(x-y)(\ln(y-x)-\ln(y)).
\]

Recall that
\begin{align}\label{eq:Z2-product}
\E\left(Z^{(2)}_{G,\alpha,\gamma, \varepsilon}\right) = \lambda^{2 \alpha n}{n\choose
  \alpha n} {\alpha n \choose \gamma n}
{(1-\alpha)n \choose (\alpha-\gamma)n}
\left(
\frac{\displaystyle\prod_{i=0}^{\gamma n d -1}
  (1-2\alpha+\gamma)dn - i}{\displaystyle\prod_{i=0}^{\gamma n d -1 } dn-1 - 2i}\right) \times
\nonumber \\
\times \left(
\frac{\displaystyle\prod_{i=0}^{\varepsilon d n -1}  (1-2\alpha)dn - i
  \cdot \displaystyle\prod_{i=0}^{(\alpha-\gamma-\varepsilon)dn -1}
  (\alpha-\gamma)dn - i}{\displaystyle\prod_{i=0}^{(\alpha-\gamma)d n -1}
  (1-2\gamma)dn-1 -2i}
\cdot  \frac{\displaystyle\prod_{i=0}^{\varepsilon d n -1}
  (1-2\alpha-\varepsilon)dn - i
}{\displaystyle\prod_{i=0}^{\varepsilon d n -1} (1-2\alpha)dn -1- 2i}
\right).
\end{align}

We also recall that the following function arises naturally in the estimation of the
second moment:
\[
f(\alpha,\gamma,\varepsilon) = 2\alpha
\ln(\lambda)+H(\alpha)+H_1(\gamma,\alpha)+ H_1(\alpha-\gamma,1-\alpha)
+d\Psi_2(\alpha,\gamma,\varepsilon)
\]
where
\begin{align}
& \Psi_2(\alpha,\gamma,\varepsilon) = H_1(\varepsilon,\alpha-\gamma)+
\int_0^\gamma
\ln(1-2\alpha+\gamma-x)d{x} - \int_0^\gamma \ln(1-2x)d{x} \nonumber \\
& + \int_0^{\varepsilon}
\ln(1-2\alpha-x)d{x} +
\int_0^{\alpha-\gamma-\varepsilon}\ln(\alpha-\gamma-x)d{x} -
\int_0^{\alpha-\gamma} \ln(1-2\gamma-2x)d{x} \nonumber \\
& +  \int_0^\varepsilon
\ln(1-2\alpha-\varepsilon-x)d{x} - \int_0^\varepsilon
\ln(1-2\alpha-2x)d{x}. \label{eq:Psi2}
\end{align}

Using \eqref{lem:product-f-comparison} to compare terms in \eqref{eq:Z2-product} and \eqref{eq:Psi2} proves Proposition \ref{prop:f-approximates-Z-square} showing that
\begin{align*}\label{e:fApproximation}
\E\left(Z^{(2)}_{\alpha,\gamma,\varepsilon}\right) = \exp(nf(\alpha,\gamma,\varepsilon) + O(\ln n)).
\end{align*}

Thus the second moment depends on the behavior of the function
$f$. We will show a series of technical lemmas showing that $f$ attains its maximum at $(\alpha^*,\gamma^*,\varepsilon^*)$ and decays quadratically around this point. 
Define
\[
\alpha_c := \frac{(2-\delta_d)\log(d)}{d}
 \]
where $\delta_d = C \frac{\ln \ln d +1}{\ln d} \to 0$ as $d \to \infty$ and $C>3$.
\begin{lemma}\label{lem:max-eps}
For each fixed $\alpha,\gamma$ in the region $\mathcal R$, the function $f$ has a local maximum at 
\[
\overline \varepsilon = \overline \varepsilon(\alpha,\gamma) =  \frac{1}{2} \left(   1-2\gamma -
\sqrt{(1-2\alpha)^2+4(\alpha-\gamma)^2} \right)
\]
\end{lemma}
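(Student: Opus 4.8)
The plan is to exploit that $f(\alpha,\gamma,\varepsilon)$ depends on $\varepsilon$ only through the single summand $d\,\Psi_2(\alpha,\gamma,\varepsilon)$, so that for fixed $\alpha,\gamma$ the critical points (and local maxima) of $f$ in $\varepsilon$ coincide with those of $\Psi_2$. I would therefore compute $\partial\Psi_2/\partial\varepsilon$ directly from the defining expression \eqref{eq:Psi2} and locate its unique zero.

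First I would record the elementary identity $\partial_x H_1(x,y)=\ln\frac{y-x}{x}$, which gives $\partial_\varepsilon H_1(\varepsilon,\alpha-\gamma)=\ln\frac{\alpha-\gamma-\varepsilon}{\varepsilon}$. The remaining $\varepsilon$-dependent pieces of $\Psi_2$ are $\int_0^\varepsilon\ln(1-2\alpha-x)dx$, $\int_0^{\alpha-\gamma-\varepsilon}\ln(\alpha-\gamma-x)dx$, $\int_0^\varepsilon\ln(1-2\alpha-\varepsilon-x)dx$ and $-\int_0^\varepsilon\ln(1-2\alpha-2x)dx$. Differentiating the first, second and fourth is immediate; for the third the variable $\varepsilon$ appears both in the upper limit and inside the integrand, so I would apply the Leibniz rule, obtaining $2\ln(1-2\alpha-2\varepsilon)-\ln(1-2\alpha-\varepsilon)$. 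Collecting all contributions, a short cancellation (the $\pm\ln(1-2\alpha-\varepsilon)$ terms cancel, and $2\ln(1-2\alpha-2\varepsilon)-\ln(1-2\alpha-2\varepsilon)=\ln(1-2\alpha-2\varepsilon)$) yields
\[
\frac{\partial\Psi_2}{\partial\varepsilon}=\ln\frac{(\alpha-\gamma-\varepsilon)(1-2\alpha-2\varepsilon)}{\varepsilon^2}.
\]

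Setting this equal to zero gives $(\alpha-\gamma-\varepsilon)(1-2\alpha-2\varepsilon)=\varepsilon^2$, which, using $2(\alpha-\gamma)+(1-2\alpha)=1-2\gamma$, expands to the quadratic $\varepsilon^2-(1-2\gamma)\varepsilon+(\alpha-\gamma)(1-2\alpha)=0$ with roots $\tfrac12\big(1-2\gamma\pm\sqrt{(1-2\gamma)^2-4(\alpha-\gamma)(1-2\alpha)}\big)$. A direct algebraic check identifies the discriminant, $(1-2\gamma)^2-4(\alpha-\gamma)(1-2\alpha)=(1-2\alpha)^2+4(\alpha-\gamma)^2$, which already gives the stated formula for the minus-root. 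I would then verify that exactly the minus-root lies in the feasible interval: since $\sqrt{(1-2\alpha)^2+4(\alpha-\gamma)^2}\ge 1-2\alpha$, the plus-root is at least $1-\alpha-\gamma>\alpha-\gamma$ and violates $\varepsilon\le\alpha-\gamma$, whereas for $\overline\varepsilon$ one checks $\overline\varepsilon\ge0$ (equivalent to $4(\alpha-\gamma)(1-2\alpha)\ge0$), $\overline\varepsilon\le\alpha-\gamma$ (equivalent to $1-2\alpha\le\sqrt{(1-2\alpha)^2+4(\alpha-\gamma)^2}$) and $1-2\alpha-2\overline\varepsilon\ge0$ (equivalent to $(1-2\alpha)^2\ge0$), all valid on $\mathcal R$ because $\alpha\ge\gamma$ and $\alpha\le\tfrac12$.

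Finally, to confirm $\overline\varepsilon$ is a local maximum, I would note that on the feasible range $0<\varepsilon<\min(\alpha-\gamma,\tfrac{1-2\alpha}{2})$ both factors $\alpha-\gamma-\varepsilon$ and $1-2\alpha-2\varepsilon$ are positive and decreasing while $\varepsilon^2$ is increasing, so the argument of the logarithm in $\partial_\varepsilon\Psi_2$ is strictly decreasing; hence $\partial_\varepsilon\Psi_2>0$ for $\varepsilon<\overline\varepsilon$ and $\partial_\varepsilon\Psi_2<0$ for $\varepsilon>\overline\varepsilon$ (equivalently, one may just compute $\partial_\varepsilon^2\Psi_2<0$). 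In the degenerate cases $\alpha=\gamma$ or $\alpha=\tfrac12$ one has $\overline\varepsilon=0$ and the maximum is one-sided, which is consistent with the claim. The only real work is the bookkeeping in the derivative computation — chiefly the Leibniz-rule term and the discriminant simplification — together with the feasibility checks; there is no conceptual obstacle.
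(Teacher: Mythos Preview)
Your proposal is correct and follows essentially the same route as the paper: both compute $\partial_\varepsilon\Psi_2$, set it to zero, solve the resulting quadratic, and verify the critical point is a maximum. Your treatment is in fact slightly more thorough --- you simplify the derivative fully (the paper writes it as an unsimplified ratio), you check explicitly that the minus-root lies in the feasible region and the plus-root does not (the paper merely asserts the interior solution is unique, citing permissiveness), and your monotonicity argument for the sign change of $\partial_\varepsilon\Psi_2$ is a clean alternative to the paper's direct second-derivative check.
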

\begin{proof}
Differentiating \eqref{eq:Psi2}, we have that the derivative of $f$ is given by
\[
\frac{\partial f}{\partial \varepsilon} = d\ln \left(
\frac{(\alpha-\gamma-\varepsilon)(1-2\alpha-\varepsilon)(1-2\alpha-2
  \varepsilon)^2}{\varepsilon^2 (1-2\alpha-\varepsilon)(1-2\alpha-2
  \varepsilon)} \right)
\]
Since the hardcore model is a permissive model, we may assume that the local maxima of $f$ are in the interior of $\mathcal R$ (see e.g. \cite[Proposition 3.2]{DemMonSun13}.
Solving for $\varepsilon$ by setting $\frac{\partial f}{\partial
  \varepsilon}= 0$, gives that the unique 
solution in the interior of $\mathcal R$ is $\varepsilon = \overline \varepsilon$. 
Further, we check that the second derivative
\[\frac{\partial^2 f}{\partial \varepsilon^2}=
-d\left( \frac{1}{\alpha-\gamma-\varepsilon}+\frac{2}{1-2\alpha
    -2\varepsilon}+\frac{2}{\varepsilon}\right)  <0
\]
and hence $\overline \varepsilon$ is a maximum. 
\end{proof}
We will also use the following technical lemma.
\begin{lemma}\label{lem:second-moment-square-of-first}
For any $0 \le \alpha \le \frac 12$
\begin{align*}
2\Phi(\alpha) = f(\alpha,\hat \gamma,\hat \varepsilon).
\end{align*}
\end{lemma}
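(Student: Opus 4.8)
The plan is to verify the identity by brute substitution: plug $\gamma=\hat\gamma=\alpha^2$ and $\varepsilon=\hat\varepsilon=\alpha(1-2\alpha)$ into the definition of $f$ and match the result against $2\Phi(\alpha)$ from \eqref{eq:Phi1} term by term. The contribution $2\alpha\ln\lambda$ occurs in both $f$ and $2\Phi$, so it cancels and plays no role. What remains splits cleanly into the ``entropy part'' (the terms of $f$ and $\Phi$ carrying no factor of $d$) and the ``$d$-part'', and I would handle these separately.

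For the entropy part one must show $H(\alpha)=H_1(\hat\gamma,\alpha)+H_1(\alpha-\hat\gamma,1-\alpha)$, since the non-$d$ content of $2\Phi(\alpha)$ is $2H(\alpha)$ and that of $f$ is $H(\alpha)+H_1(\hat\gamma,\alpha)+H_1(\alpha-\hat\gamma,1-\alpha)$. This is immediate from the elementary simplifications $\ln\hat\gamma-\ln\alpha=\ln\alpha$, $\alpha-\hat\gamma=\alpha(1-\alpha)$, $(1-\alpha)-(\alpha-\hat\gamma)=(1-\alpha)^2$: a one-line computation gives $H_1(\hat\gamma,\alpha)=-\alpha^2\ln\alpha-\alpha(1-\alpha)\ln(1-\alpha)$ and $H_1(\alpha-\hat\gamma,1-\alpha)=-\alpha(1-\alpha)\ln\alpha-(1-\alpha)^2\ln(1-\alpha)$, and the sum collapses to $-\alpha\ln\alpha-(1-\alpha)\ln(1-\alpha)=H(\alpha)$.

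For the $d$-part the claim reduces to showing $\Psi_2(\alpha,\hat\gamma,\hat\varepsilon)=2(1-\alpha)\ln(1-\alpha)-(1-2\alpha)\ln(1-2\alpha)$, matching the $d$-bracket of $\Phi$ in \eqref{eq:Phi1}. The key point is that every integrand appearing in \eqref{eq:Psi2} is $\ln$ of an affine function of $x$, so each of the seven integrals evaluates, via $\int\ln u\,du=u\ln u-u=:g(u)$, to a difference $g(b)-g(a)$ of this antiderivative at its endpoints. When $\gamma=\alpha^2$ and $\varepsilon=\alpha(1-2\alpha)$, every such endpoint becomes a product or square of the three basic quantities $\alpha$, $1-\alpha$, $1-2\alpha$: one checks $1-2\alpha+\hat\gamma=(1-\alpha)^2$, $\alpha-\hat\gamma=\alpha(1-\alpha)$, $1-2\gamma=1-2\alpha^2$, $1-2\alpha-\hat\varepsilon=(1-\alpha)(1-2\alpha)$, $1-2\alpha-2\hat\varepsilon=(1-2\alpha)^2$, and crucially $\alpha-\hat\gamma-\hat\varepsilon=\hat\gamma$. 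Collecting the coefficient of each distinct value $g(\cdot)$ first, the contributions of $g(1-2\alpha)$, $g(1-2\alpha^2)$ and $g((1-\alpha)(1-2\alpha))$ cancel, leaving $g((1-\alpha)^2)+g(\alpha(1-\alpha))-g(\alpha(1-2\alpha))-\tfrac12 g((1-2\alpha)^2)$ plus a constant $\tfrac12$ produced by $g(1)=-1$ in the two $\int_0^\gamma$-terms. Expanding $g$ and adding the last remaining piece $H_1(\hat\varepsilon,\alpha-\hat\gamma)$, the $\ln\alpha$ terms vanish, the constants sum to $0$, and the coefficients of $\ln(1-\alpha)$ and $\ln(1-2\alpha)$ come out to $2(1-\alpha)$ and $-(1-2\alpha)$, which is exactly the required identity.

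The whole argument is mechanical and contains no conceptual difficulty; the only real obstacle is bookkeeping, since there are seven integrals together with an $H_1$ term and several half-integer constants generated by $g(1)=-1$ and by the factor-of-$2$ substitutions, and it is easy to drop a term. The cleanest route, as indicated above, is to record every integral in the form $g(b)-g(a)$, tally the net coefficient of each distinct $g(\cdot)$-value before doing any expansion, and only then write out $g$ and simplify.
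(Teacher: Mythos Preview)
Your proof is correct and follows the same approach as the paper, namely direct substitution of $\hat\gamma=\alpha^2$ and $\hat\varepsilon=\alpha(1-2\alpha)$ into the definition of $f$ followed by simplification. The paper's own proof is a two-line sketch that just writes down the difference $2\Phi(\alpha)-f(\alpha,\hat\gamma,\hat\varepsilon)$ and declares it to be zero; you supply the bookkeeping that the paper omits, including the clean split into the entropy part and the $d$-part and the systematic handling of the integrals via $g(u)=u\ln u-u$.
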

\begin{proof}Substituting $\hat \gamma = \alpha^2$ and $\hat  \varepsilon = \alpha(1-2\alpha)$ in the expression for $f(\alpha,\gamma,\varepsilon)$, and simplifying, we have
\begin{align*}
2 \Phi(\alpha)  - f(\alpha,\hat  \gamma,\hat  \varepsilon)
 = \ & 2d \left( (1-\alpha)\ln(1-\alpha) - \frac{1-2\alpha}{2} \ln \left( 1-2\alpha\right) -2 \right) -d\Psi_2(\alpha,\hat  \gamma,\hat  \varepsilon) \\
& + H(\alpha) + H_1(\alpha^2,\alpha)+H_1(\alpha(1-\alpha),1-\alpha) \; = \; 0 \qedhere
\end{align*}
\end{proof}

Define the function
\[
g(\alpha,\gamma) \equiv f(\alpha,\gamma,\overline \varepsilon(\alpha,\gamma))
\]
and consider its extremal values for fixed $\alpha$ in the region
where $\alpha,\gamma \ge 0$ and $\alpha -\gamma \ge 0$.
The derivative of $g$ is given by
\begin{align}\label{e:gFirstDerivative}
\frac{\partial g}{\partial \gamma}(\alpha,\gamma) = \frac{\partial
  f}{\partial \gamma} (\alpha,\gamma,\overline \varepsilon(\alpha,\gamma))
\end{align}
while its second derivative is
\begin{align}\label{e:gSecondDerivative}
\frac{\partial^2 g}{\partial \gamma^2}(\cdot,\cdot) = \frac{\partial
  f}{\partial^2 \gamma}(\cdot,\overline \varepsilon) + \frac{\partial \overline
  \varepsilon}{\partial \gamma} \frac{\partial f}{ \partial \gamma
  \partial \varepsilon}(\cdot,\cdot,\overline \varepsilon).
\end{align}

We  establish  the behavior of the function $f$ near its maximum by showing several facts about $g$.

\begin{proof}[Proof of Proposition \ref{prop:f-global-max}]
By Lemma \ref{lem:max-eps}, for fixed $\alpha$ and $\gamma$, $f$ has a maximum at $\overline \varepsilon(\alpha,\gamma)$. Thus, it remains to find the maximum of $g(\alpha,\gamma)$. In what follows we will show that for fixed $\alpha$, $g$ is maximized at $(\alpha,\alpha^2)$. 
Noting that  $\overline \varepsilon (\alpha,\hat \gamma) = \alpha(1-2\alpha)$, it follows that $f$ is maximized at $(\alpha^*, \gamma^*,\varepsilon^*)$ since by Lemma \ref{lem:second-moment-square-of-first}, $f(\alpha,\alpha^2,\alpha(1-2\alpha)) = 2\Phi(\alpha)$, and by \eqref{eq:alpha-star-eqn}, $\Phi$ is maximized at $\alpha^*$.

We can verify that $(\alpha,\alpha^2)$ is a stationary
point of $g$ by computing the first  derivative.
In what follows, we establish that for fixed $\alpha < \alpha_c$, $(\alpha,\alpha^2)$ is a global maximizer of $g(\alpha,\gamma)$ by considering several possible ranges for $\gamma$. This will consist of two main steps. The first is to show that $(\alpha,\alpha^2)$ is a maximum and the second is to show that the function $g$ is larger at $(\alpha,\alpha^2)$ than at any other possible maximum. Let $\gamma_i:=c_i \frac{\alpha}{\ln(\alpha^{-1})}$ for $i =1,2,3$ with the constants $c_i$ to be set later. 

Computing the derivatives
using equations~\eqref{e:gFirstDerivative} and~\eqref{e:gSecondDerivative} gives that

\begin{align}
\frac{\partial^2 g}{\partial \gamma^2} = & -\left(
\frac{2}{\alpha-\gamma}+\frac{1}{\gamma} + \frac{1}{1-2\alpha+\gamma}
\right) \nonumber \\
& + d\left( \frac{1}{1-2\alpha+\gamma}+ \frac{2}{\alpha-\gamma}-
\frac{2(\alpha-\gamma)}{(\alpha-\gamma-\overline\varepsilon)
  \sqrt{(1-2\alpha)^2+4(\alpha-\gamma)^2}} \right). \label{eq:d2g-by-dgamma2}
\end{align}

\begin{figure}[h]
\center
\includegraphics[width=9cm]{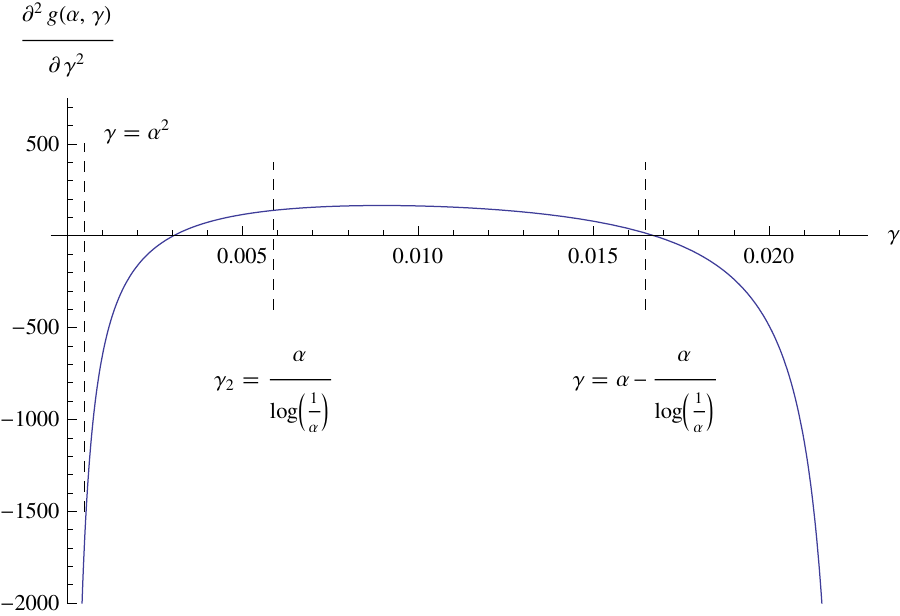}
\caption{Second derivative of $g(\alpha,\gamma)$ with respect to $\gamma$.}
\label{fig:del2-g}
\end{figure}

\begin{figure}[h]
\center
\includegraphics[width=9cm]{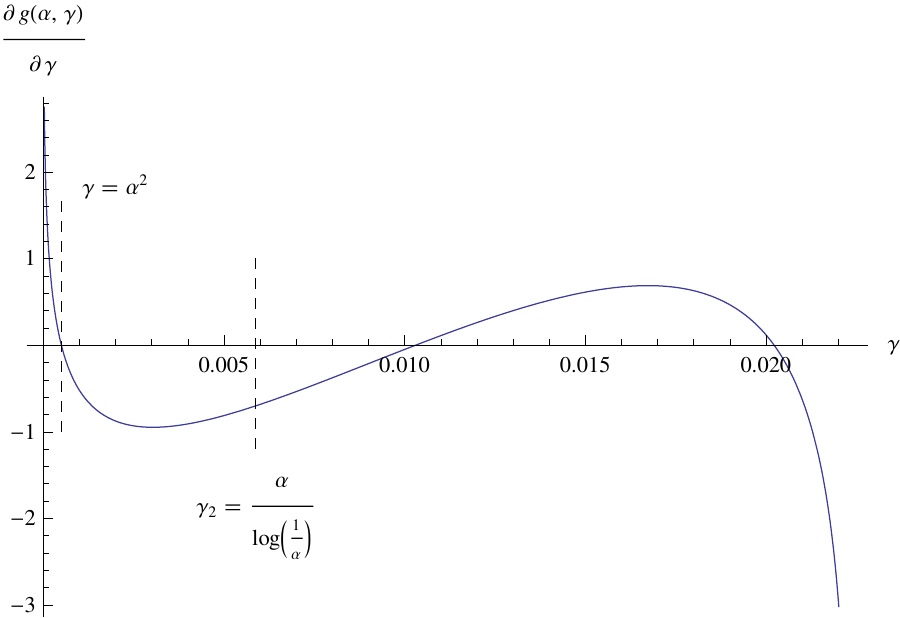}
\caption{First derivative of $g(\alpha,\gamma)$ with respect to $\gamma$.}
\label{fig:del-g}
\end{figure}

\begin{figure}
\center
\includegraphics[width=9cm]{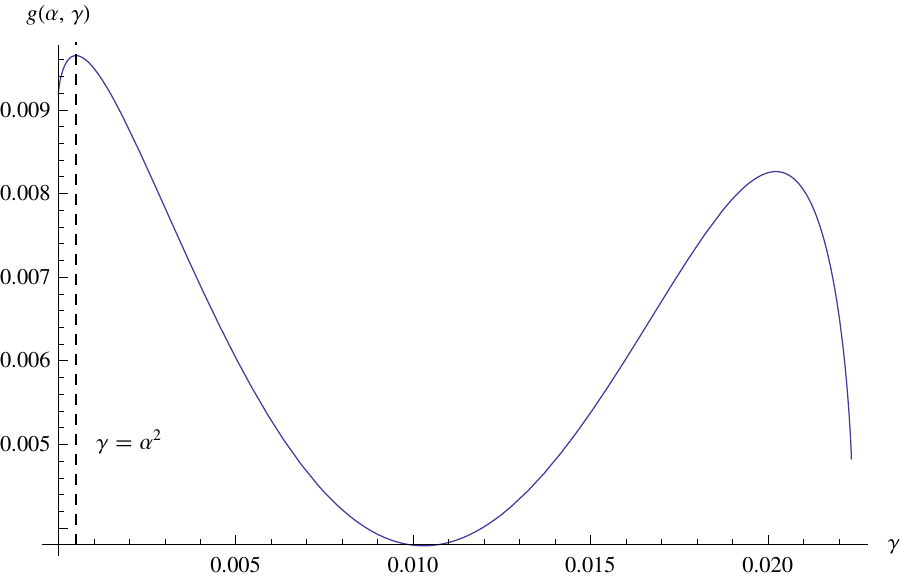}
\caption{The function $g(\alpha,\gamma)$.}
\label{fig:g}
\end{figure}

\begin{enumerate}[1)]
\item \label{it:alpha-square-is-max}
Let $d$ be sufficiently large so that by the assumption that $\alpha < \alpha_c$, we have that $d < 3 \alpha^{-1}\ln(\alpha^{-1})$. Let $d$ also be large enough so that for a small constant $c_1$ to be chosen later, $\alpha^2 < c_1 \alpha/ \ln(\alpha^{-1})$. 
We show that there is a constant $c_1$ so that for $\gamma \in \left[0,\gamma_1\right]$, $\frac{\partial^2 g}{\partial \gamma^2} <0 $ (see Figure \ref{fig:del2-g}) and hence the stationary point
  $(\alpha,\alpha^2)$ of $g$ is a maximum (see Figures Figure \ref{fig:del-g} and \ref{fig:g}).

Note that the first term of the second derivative \eqref{eq:d2g-by-dgamma2} is
  negative and has
  magnitude at least $1/\gamma \ge \alpha^{-1}\ln(\alpha^{-1})/c_1$ for this range of $\gamma$. The term on the second line of \eqref{eq:d2g-by-dgamma2} is positive, but we will argue that its magnitude is
  $O(\alpha^{-1}\ln(\alpha^{-1}))$. The claim will follow by taking the constant $c_1$ to be small enough.

The terms in the second bracket of \eqref{eq:d2g-by-dgamma2} can be
bounded by $O(1)$ as can be seen below from the series expansion in Mathematica (note that this calculation does not depend on the size of $\gamma$ or on $c_1$).
\begin{align}
\nonumber & \frac{1}{1-2\alpha+\gamma}+ \frac{2}{\alpha-\gamma} -  \frac{2 (\alpha-\gamma)}{\sqrt{(1-2 \alpha)^2+4 (\alpha-\gamma)^2} \left(\alpha-\gamma-\frac{ \left(1-2\gamma-\sqrt{(1-2 \alpha)^2+4 (\alpha-\gamma)^2}\right)}{2}\right)} \\
= & \left(1+4 \alpha+O(\alpha^2)\right)+\left(-3-12 \alpha+O(\alpha^2)\right) \gamma+O(\gamma^2) \label{eq:second-term-expansion}
\end{align}
Therefore
\[
\frac{\partial^2 g}{\partial \gamma^2} < -\frac 1\gamma + C d < - \frac{1}{c_1} \alpha^{-1} \ln(\alpha^{-1})  + C \alpha^{-1} \ln(\alpha^{-1})
\]
where above the particular value of $C$ may change in each appearance.
 Hence by choosing $c_1$ to be sufficiently small, the claim follows.

We now divide the analysis showing that $\gamma = \alpha^2$ corresponds to a global maximum into two cases based on the size of $\alpha$.

\item When $\alpha$ is small enough, we show that $g$ has no stationary point for $\gamma \in (\alpha^2,\alpha]$. 
Suppose that $\alpha < \varepsilon \ln(d) / d$ for $\varepsilon$ sufficiently small. Then, for some $\varepsilon'$, $d< \varepsilon' \alpha^{-1} \ln(\alpha^{-1})$. Expanding the terms in the second bracket of \eqref{eq:d2g-by-dgamma2} as in \eqref{eq:second-term-expansion}, and  recalling that $\partial g(\alpha,\alpha^2)/\partial \gamma~=~0$, we have that for $\gamma > \alpha^2$,
\begin{align}\label{eq:small-alpha-dg-upper-bound}
\frac{\partial g}{\partial \gamma} (\alpha,\gamma)   & <  \int_{\alpha^2}^\gamma 
(-\frac{1}{s} + Cd) d s < \ln\left(\frac{\alpha^2}{\gamma}\right) + C\varepsilon' \alpha^{-1}\ln(\alpha^{-1}) (\gamma-\alpha^2) .
\end{align}

We claim that the bound on the right hand side of \eqref{eq:small-alpha-dg-upper-bound} is maximized at the end points of the interval $[\alpha^2,\alpha]$.
Indeed, differentiating the bound with respect to $\gamma$, the only stationary point in the interval is at $\gamma = \alpha/(C \varepsilon' \ln(\alpha^{-1}) )$. Furthermore, the second derivative of the bound is positive so that the stationary point can only be a minimum.

At $\gamma = \alpha^2$, the bound evaluates to $0$. Evaluating \eqref{eq:small-alpha-dg-upper-bound} at $\gamma= \alpha$, we obtain that
\[
\frac{\partial g}{\partial \gamma} (\alpha,\alpha)  < \ln(\alpha) + C\varepsilon' \alpha^{-1}\ln(\alpha^{-1}) (\alpha-\alpha^2) \le \ln(\alpha) (1-C\varepsilon') <0
\]
since $\varepsilon'$ can be made arbitrarily small by our choice of $\varepsilon$. It follows that $\partial g(\alpha,\gamma)/ \partial \gamma <0$ in $(\alpha^2,\alpha]$ and the interval does not contain any stationary points.

When $\alpha \ge \varepsilon \ln(d)/d$, there may be a second stationary point near $(\alpha,\alpha)$ and the values of $g$ at the stationary points must be compared to show that $(\alpha,\alpha^2)$ is the global maximum. This will be done in the points that follow.

\item  We show that there is a constant $c_2 >c_1$ so that for  
  $\gamma \in (\alpha^2,\gamma_2]$, $\frac{\partial g}{\partial \gamma}<0$. Thus, for this range of $\gamma$ there are no
  stationary points of $g$ (see Figure \ref{fig:del-g}).
Integrating the second derivative in this range we obtain that

\begin{align*}
\frac{\partial g}{\partial \gamma}(\alpha,\gamma_2)
&  = \int_{\alpha^2}^{\gamma_1} \frac{\partial^2 g(\alpha,s)}{\partial s^2}
ds+ \int_{\gamma_1}^{\gamma_2} \frac{\partial^2 g(\alpha,s)}{\partial s^2}
ds \nonumber 
\le  -\int_{\alpha^2}^{\gamma_1} \frac{1}{2s} ds +
\int_{\gamma_1}^{\gamma_2} Cd ds\\
& =  \frac{1}{2} \ln\left( \frac{\alpha^2}{\gamma_1} \right)+
Cd(\gamma_2-\gamma_1) 
 \le  \frac{1}{2} \ln\left( \frac{\alpha\ln(\alpha^{-1})}{c_1} \right)+
C(c_2-c_1).
\end{align*}
The upper bounds in the first line above follow by the arguments similar to those of 1) and 2) above. In particular, $c_1$ can be chosen small enough so that the first bound follows. The last inequality follows since $\alpha<\alpha_c$ which implies $d< 3\alpha^{-1}\ln(\alpha^{-1})$. As $d \to \infty$, $\alpha \to 0$ and
therefore for large enough $d$, the
first derivative will be negative as claimed.

\item There are constants $c_2,c_3$ such that $c_2 > c_1 $ and for $\gamma \in \left[\gamma_2,\alpha-\gamma_3\right]$, $\frac{\partial^2 g}{\partial \gamma^2} >0$ (see Figure \ref{fig:del2-g}). This  implies that $g$ does not have a maximum in this range.

For this range of $\gamma$, the first term of $\frac{\partial^2 g}{\partial \gamma^2}$ in \eqref{eq:d2g-by-dgamma2} can be bounded as
\begin{align}
-\left(
\frac{2}{\alpha-\gamma}+\frac{1}{\gamma} + \frac{1}{1-2\alpha+\gamma}
\right) \ge -\left(\frac{2}{c_3}+\frac{1}{c_2}\right)\alpha^{-1}\ln(\alpha^{-1}).
\label{eq:del2-firstterm}
\end{align}

The second term can be bounded below as follows. We use the Taylor series expansion
\[
\sqrt{x+y} = \sqrt{x} + \sum_{i=1}^\infty
\displaystyle x^{\frac{1}{2} - i} y^i
\left(\prod_{k=1}^{i}\left(\frac{3}{2}-k\right)\frac{1}{i!} \right)
\]
to expand as follows with $x=(1-2\alpha)^2$ and $y=4(\alpha-\gamma)^2$:
\[
\overline\varepsilon = (\alpha-\gamma) - \frac{1}{2}\sum_{i=1}^\infty
(1-2\alpha)^{1 - 2i} (2(\alpha-\gamma))^{2i} \left(
  \displaystyle\prod_{k=1}^{i}\left(\frac{3}{2}-k\right)\frac{1}{i!} \right).
\]
Rearranging, we have
\[
\alpha-\gamma-\overline\varepsilon =  \frac{1}{2} \sum_{i=1}^\infty
 \frac{(2(\alpha-\gamma))^{2i}}{(1-2\alpha)^{2i-1}} \left(
  \displaystyle\prod_{k=1}^{i}\left(\frac{3}{2}-k\right)\frac{1}{i!}
\right).
\]
Similarly, 
\[
\sqrt{(1-2\alpha)^2 +4(\alpha-\gamma)^2} =  \sum_{i=0}^\infty
 \frac{(2(\alpha-\gamma))^{2i}}{(1-2\alpha)^{2i-1}} \left(
  \displaystyle\prod_{k=1}^{i}\left(\frac{3}{2}-k\right)\frac{1}{i!}
\right).
\]
Therefore, the second term of \eqref{eq:d2g-by-dgamma2} can be bounded as follows
\begin{align}
& d\left( \frac{1}{1-2\alpha+\gamma}+ \frac{2}{\alpha-\gamma}-
\frac{2(\alpha-\gamma)}{(\alpha-\gamma-\overline\varepsilon)
  \sqrt{(1-2\alpha)^2+4(\alpha-\gamma)^2}} \right) \nonumber \\
& = d \left(1+ \frac{2}{\alpha-\gamma}\left(   1- \frac{1}{\left(
  \frac{1}{1-2\alpha} -
  \frac{(\alpha-\gamma)^2}{(1-2\alpha)^3} + \cdots \right)
  \left((1-2\alpha) + \frac{2(\alpha-\gamma)^2}{1-2\alpha} -
  \frac{2(\alpha-\gamma)^4}{(1-2\alpha)^3}+ \cdots\right)} \right)\right) \nonumber \\
& =   d\left(1+ \frac{2(\alpha-\gamma)}{(1-2\alpha)^2}
\frac{1- 4 \frac{(\alpha-\gamma)^2}{(1-2\alpha)^2} \cdots}{1+ \frac{(\alpha-\gamma)^2}{(1-2\alpha)^2} - 4 \frac{(\alpha-\gamma)^4}{(1-2\alpha)^4} \cdots } \right)
\ge d\left(1+ \frac{2(\alpha-\gamma)}{(1-2\alpha)^2} \left( 1-  5 \frac{(\alpha-\gamma)^2}{(1-2\alpha)^2}\right) \right) \ge d  
\label{eq:del2-secondterm}
\end{align}
Combining \eqref{eq:del2-firstterm} and \eqref{eq:del2-secondterm}
we see that to prove the claim it is enough to choose constants $c_2$ and $c_3$ to satisfy $\left(\frac{2}{c_3}+\frac{1}{c_2}\right) \alpha^{-1}\ln(\alpha^{-1}) < d.$ Recall that we are now in the case that $d> \varepsilon' \alpha^{-1}\ln(\alpha^{-1})$. Thus, the claim follows by choosing $c_2>c_1$ and $c_2$ and $c_3$ large enough.

\item Lastly, we show that for $\gamma \in [\alpha-\gamma_3,\alpha]$,
  the maximum obtained has a smaller value than the maximum at
  $\gamma=\alpha^2$ (see Figure \ref{fig:g}).
By \eqref{eq:d2g-by-dgamma2} we have
\begin{align*}
\frac{\partial g^2}{\partial \gamma^2} \ge - \frac{2}{\alpha-\gamma} \left( 1+ \frac{\alpha-\gamma}{\gamma} \right) \ge
\frac{-2\left(1+\frac{c_3}{\ln(\alpha^{-1})-c_3}\right)}{\alpha-\gamma}
\end{align*} 

Suppose that $\gamma_4$ is any maximum of $g$ in this interval  so that
$\partial g(\alpha,\gamma)/\partial \gamma=0$. We can bound the first derivative as follows.
\begin{align*}
\frac{\partial g(\alpha,\gamma)}{\partial \gamma} = \int_{\gamma_4}^{\gamma}\frac{\partial^2 g(\alpha,s)}{\partial s^2}ds & \ge
-2\left(1+\frac{c_3}{\ln(\alpha^{-1})-c_3}\right)\int_{\gamma_4}^{\gamma}
\frac{1}{\alpha-s}ds   \\
& = -2\left(1+\frac{c_3}{\ln(\alpha^{-1})-c_3}\right) \ln
\left(\frac{\alpha-\gamma_4}{\alpha-\gamma}\right)
\end{align*}

Now, using the above bound, we integrate to obtain
\begin{align*}
& g(\alpha,\alpha)-g(\alpha,\gamma_4) = \int_{\gamma_4}^{\alpha}\frac{\partial g(\alpha,s)}{\partial s}ds \ge -2\left(1+\frac{c_3}{\ln(\alpha^{-1})-c_3}\right)
(\alpha-\gamma_4) >
-2(1+\varepsilon_d)(\alpha-\gamma_4) \\
 \Rightarrow & \ \ \ \ \ \ \ \ g(\alpha,\gamma_4) < g(\alpha,\alpha) +2(1+\varepsilon_d)(\alpha-\gamma_4)
\end{align*}
where $\varepsilon_d = 1/(\ln d - \ln \ln d)$.
We would like to show that $g(\alpha,\alpha^2) > g(\alpha,\gamma_4)$. Thus it suffices to show that $g(\alpha,\alpha^2) - g(\alpha,\alpha) >
  2(1+\varepsilon_d)(\alpha-\gamma_4)$.
Roughly, $g(\alpha,\alpha)$ should behave like $\Phi$, the first moment, while $g(\alpha,\alpha^2)$ is $2\Phi$. Recall that $g(\alpha,\alpha) = f(\alpha,\alpha,0)$ since $\overline \varepsilon(\alpha,\alpha) =0$. Comparing \eqref{eq:first-moment-product} and \eqref{eq:second-moment-product}, we have
\begin{align*}
\exp\left( n f(\alpha,\alpha,0)\right) =
\lambda^{\alpha n}e^{n\Phi(\alpha)}  \\
\Rightarrow g(\alpha,\alpha) = f(\alpha,\alpha,0) = \alpha
\log(\lambda) + \Phi(\alpha)
\end{align*}
Also,
\begin{align*}
\Phi(\alpha) = \alpha \log(\lambda) + \frac{1}{n}\ln(|\{ \sigma : |\sigma| = \alpha n\}|)
\end{align*}
Therefore,
\begin{align}
g(\alpha,\alpha^2)-g(\alpha,\alpha)& = 2\Phi(\alpha) - \alpha\log(\lambda) -
\Phi(\alpha) \nonumber \\
 & = \frac{1}{n}\ln(|\{ \sigma : |\sigma| = \alpha n\}|) \nonumber \\
& =   H(\alpha) + d \left(
(1-\alpha)\ln(1-\alpha) + (\alpha - \frac{1}{2})\ln(1-2\alpha)
  \right). \label{eq:gasuare-ga}
\end{align}
Using Mathematica to expand the terms of the expression in \eqref{eq:gasuare-ga}, we obtain for $\alpha < \al_c$
\begin{align*}
 H(\alpha)  + d \left(
(1-\alpha)\ln(1-\alpha) + (\alpha - \frac{1}{2})\ln(1-2\alpha)
  \right)  & =(1-\ln(\alpha))\alpha - \frac{d+1}{2}\alpha^2 -
  \left(\frac{d}{2}+\frac 16 \right)\alpha^3 +O(\alpha^4) \\
& >2(1+\varepsilon_d)\alpha   >  2(1+\varepsilon_d)(\alpha -\gamma_4)\\
\end{align*}
for $d$ sufficiently large. 

\end{enumerate}
The first part of Proposition \ref{prop:f-global-max} follows by the five facts above. Next we show that $f(\alpha,\gamma,\varepsilon)$ decays quadratically near $\alpha^*,\gamma^*,\varepsilon^*$.
We start by calculating the Hessian matrix for $f$.
The partial second mixed derivatives of $f(\alpha,\gamma,\varepsilon)$ evaluated at $\hat \gamma = \alpha^2$ and $\hat \varepsilon = \alpha(1-2\alpha)$ are given by:
\begin{align*}
 & \frac{\partial^2 f}{\partial \alpha^2}  = \frac{\alpha^2 (6-21 d)+2 \alpha^4 (-4+d)-d+16 \alpha^3 d+\alpha (-2+8 d)}{(1-2 \alpha)^2 (-1+\alpha)^2 \alpha^2}\\
 & \frac{\partial^2 f}{\partial \alpha \partial \gamma} =\frac{\alpha (2-4 d)+d+\alpha^2 d}{(-1+\alpha)^2 \alpha^2}\\
&  \frac{\partial^2 f}{\partial \alpha \partial \varepsilon} = \frac{\left(1-4 \alpha+2 \alpha^2\right) d}{(1-2 \alpha)^2 \alpha^2}\\
 %
%
& \frac{\partial^2 f}{\partial \gamma^2}  = \frac{-1+\left(-1+4 \alpha-2 \alpha^2\right) d}{(-1+\alpha)^2 \alpha^2}\\
& \frac{\partial^2 f}{\partial \gamma \partial \varepsilon}  = -\frac{d}{\alpha^2}\\
%
%
%
&  \frac{\partial^2 f}{\partial \varepsilon^2}  = -\frac{\left(1-2 \alpha+2 \alpha^2\right) d}{(1-2 \alpha)^2 \alpha^2}
\end{align*}
Using Mathematica to calculate the characteristic polynomial of the Hessian matrix, we obtain
\begin{align*}
& \frac{1}{(1-\alpha)^3 \alpha^5 (1-2 \alpha)^3} \big((2 d-8 \alpha d+12 \alpha^2 d-8 \alpha^3 d+2 \alpha d^2-6 \alpha^2 d^2+8 \alpha^3 d^2-2 \alpha^3 d^3) \\
& + (2 \alpha^2 -12 \alpha^3 +24 \alpha^4 -16 \alpha^5 +2 \alpha d -14 \alpha^2 d +50 \alpha^3 d -102 \alpha^4 d +120 \alpha^5 d -92 \alpha^6 d +40 \alpha^7 d \\  & \; \; \; \;  \; \; \; \; \; \; \; \; +2 \alpha^3 d^2 -10 \alpha^4 d^2 +10 \alpha^5 d^2 +16 \alpha^6 d^2 -20 \alpha^7 d^2 )x\\
& + (\alpha^3 -5 \alpha^4 +6 \alpha^5 +2 \alpha^6 +4 \alpha^7 -24 \alpha^8 +16 \alpha^9 +3 \alpha^3 d -29 \alpha^4 d +116 \alpha^5 d -236 \alpha^6 d +246 \alpha^7 d \\ & \; \;  \; \;  \; \; \; \; \; \; \; \; -116 \alpha^8 d +16 \alpha^9 d  )x^2\\
& + (\alpha^5 -9 \alpha^6 +33 \alpha^7 -63 \alpha^8 +66 \alpha^9 -36 \alpha^{10} +8 \alpha^{11} )x^3 \big)
\end{align*}
Recall that $ \alpha = O(\ln d / d) $. Thus by taking $d$ sufficiently large, it is enough to consider only the leading terms in the coefficients of the characteristic polynomial. Recall also that $\alpha\le1/2$. First, note that the value of the polynomial at $x=0$ which is given by the constant coefficient is positive. Secondly, each of the other coefficients is also positive, so that for all $x>0$, the derivative of the characteristic polynomial is positive. Hence the polynomial itself has only real negative roots and the Hessian is negative definite. Thus, $f$ is strictly concave at $\alpha^*,\gamma^*,\varepsilon^*$
and must decay quadratically around its maximum. Hence we have that for any $(\alpha,\gamma,\varepsilon)$
\[
f(\alpha^*,\gamma^*,\varepsilon^*) - f(\alpha,\gamma,\varepsilon)  \ge  C (|\alpha-\alpha^*|^2 + |\gamma-\gamma^*|^2+ |\varepsilon-\varepsilon^*|^2). \qedhere
\]
\end{proof}

We conclude by establishing Proposition \ref{prop:second-moment-square-of-first} which says that second moment of the partition function can be bounded by the  by the square of the first moment up to a polynomial term.

\begin{proof}[Proof of Proposition \ref{prop:second-moment-square-of-first}]
Applying the Cauchy-Schwartz inequality  we have
\begin{align*}
\E((Z_G)^2) & =\sum_{\alpha,\alpha'} \E\left(  Z_{G,\alpha} Z_{G,\alpha'}\right)
 = \tilde \Theta(1) \sum_{\alpha} \E ( (Z_{G,\alpha})^2)\\
 & = \tilde \Theta(1)  \sum_{\alpha,\gamma,\epsilon} \E  Z^{(2)}_{G,\alpha,\gamma,\varepsilon} \leq \exp(n f(\alpha^*,\gamma^*,\epsilon^*)+O(\log n))\\
&=   \exp(2\Phi(\alpha^*) n +O(\log n)) =\tilde \Theta(1)  (\E Z_G)^2. \qedhere
\end{align*}
where the second inequality is by Proposition \ref{prop:f-approximates-Z-square} while the penultimate equality is by Lemma~\eqref{lem:second-moment-square-of-first}.
\end{proof}

\bibliographystyle{plain}
\bibliography{all,my,extra}

\end{document}